\newcommand{\ZZ}{\mathbb{Z}}
\newcommand{\CC}{\mathbb{C}}
\newcommand{\NN}{\mathbb{N}}
\newcommand{\wt}{\mathrm{wt}}
\newcommand{\ffbox}[1]{
\setbox9=\hbox{$\scriptstyle\overline{1}$}
\framebox[20pt][c]{\rule{0mm}{\ht9}${\scriptstyle #1}$}
}
\newcommand{\QQ}{\mathbb{Q}}
\newcommand{\Hlie}{\mathfrak{h}}
\newcommand{\U}{\mathcal{U}}
\newtheorem{thm}{Theorem}[section]
\newtheorem{defi}[thm]{Definition}
\newtheorem{prop}[thm]{Proposition}
\newtheorem{lem}[thm]{Lemma}
\newtheorem{conj}[thm]{Conjecture}
\newtheorem*{conjecture}{Conjecture}
\newtheorem{rem}[thm]{Remark}
\newtheorem{ex}[thm]{Example}
\title{Extremal loop weight modules for $\U_q(\hat{sl}_\infty)$}
\author[Mathieu Mansuy]{Mathieu Mansuy}
\address{Univ. Paris-Diderot-Paris 7, IMJ - PRG CNRS UMR 7586, B\^at. Sophie Germain, Case 7012, 75205 Paris Cedex 13, FRANCE}
\email{mansuy@math.jussieu.fr}
\begin{document}

\begin{abstract}
We construct by fusion product new irreducible representations of the quantum affinization $\U_q(\hat{sl}_\infty)$. The action is defined via the Drinfeld coproduct and is related to the crystal structure of semi-standard tableaux of type $A_\infty$. We call these representations extremal loop weight modules. The main motivations are applications to quantum toroidal algebras $\U_q(sl_{n+1}^{tor})$: we prove the conjectural link between $\U_q(\hat{sl}_\infty)$ and $\U_q(sl_{n+1}^{tor})$ stated in \cite{hernandez_algebra_2011} for this family of representations. We recover in this way the extremal loop weight modules obtained in \cite{mansuy_extremal_2013}.
\end{abstract}

\maketitle

\tableofcontents

\section{Introduction}

Let $\U_q(sl_\infty)$ be the quantum group associated to the infinite Dynkin diagram

\bigskip

\begin{center} \begin{tikzpicture}
\tikzstyle{point}=[circle,draw]
\tikzstyle{ligne}=[thick]
\tikzstyle{pointille}=[thick,dotted]

\node (1) at (-4, 0) []{};
\node (2) at ( -2,0) [point] {};
\node (3) at ( -1,0) [point] {};
\node (4) at ( 0,0) [point] {};
\node (5) at ( 1,0) [point] {};
\node (6) at (2,0) [point] {};
\node (7) at (4,0) [] {};

\draw [pointille] (1) -- (2);
\draw [ligne] (2) -- (3);
\draw [ligne] (3) -- (4);
\draw [ligne] (4) -- (5);
\draw [ligne] (5) -- (6);
\draw [pointille] (6) -- (7);

\end{tikzpicture}\end{center}

\bigskip

It can be considered as limit of the quantum groups of type $A_n$ and has been studied by various authors (see for example \cite{ariki_factorization_2012, enomoto_symmetric_2008, frenkel_hopf_2002, levendorskii_quantum_1991} and references therein). In this paper we study the quantum affinization $\U_q(\hat{sl}_\infty)$ of $\U_q(sl_\infty)$ \cite{hernandez_representations_2005, nakajima_quiver_2001}. This algebra is introduced in \cite{hernandez_algebra_2011} and can be viewed as the limit of the quantum affine algebras $\U_q(\hat{sl}_{n+1})$ when $n \rightarrow \infty$.

The quantum affinizations, in particular the quantum affine algebras and the quantum toroidal algebras, have been intensively studied (see for example \cite{chari_quantum_1991, feigin_representations_2013, frenkel_$q$-characters_2002, frenkel_$q$-characters_1999, hernandez_representations_2005, hernandez_quantum_2009, hernandez_algebra_2011, nakajima_quiver_2001} and references therein). In recent works \cite{mansuy_quantum_2012, mansuy_extremal_2013} we constructed new families of integrable representations of the quantum toroidal algebra $\U_q(sl_{n+1}^{tor})$, called extremal loop weight modules, which generalize the $\ell$-highest weight modules: there are representations generated by an extremal vector for the horizontal quantum affine subalgebra in the sense of Kashiwara \cite{kashiwara_crystal_1994, kashiwara_level-zero_2002}. The main motivation is the construction of finite-dimensional representations of the quantum toroidal algebra at roots of unity.

The representation theory of the algebra $\U_q(\hat{sl}_\infty)$ is related to the one of quantum toroidal algebras $\U_q(sl_{n+1}^{tor})$ in the following way (see \cite{hernandez_algebra_2011}) : let us consider the morphism between the corresponding Dynkin diagrams of the algebras $\U_q(\hat{sl}_\infty)$ and $\U_q(sl_{n+1}^{tor})$
$$\phi_n : i \in \ZZ \mapsto \overline{i} \in \ZZ / (n+1) \ZZ.$$
It gives rise to a ring morphism
$$\phi_n : \ZZ[Y_{i,a}^{\pm 1}]_{i \in \ZZ, a \in \CC^{\ast}} \longrightarrow \ZZ[Y_{i,a}^{\pm 1}]_{i \in I_n, a \in \CC^{\ast}}.$$
Then the following combinatorial link between the algebras $\U_q(\hat{sl}_\infty)$ and $\U_q(sl_{n+1}^{tor})$ is expected in \cite{hernandez_algebra_2011}.

\begin{conjecture}\cite[Conjecture 5.3]{hernandez_algebra_2011}
Let $V$ be a simple $\U_q(\hat{sl}_\infty)$-module in the category $\mathcal{O}_{\mathrm{int}}$. Then the image of the $q$--character of $V$ by $\phi_n$ is also the $q$--character of a representation of $\U_q(sl_{n+1}^{tor})$.
\end{conjecture}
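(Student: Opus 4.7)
The plan is to parametrize each simple module $V=L(m)$ in $\mathcal{O}_{\mathrm{int}}$ by its highest $\ell$-weight, a dominant monomial $m \in \ZZ[Y_{i,a}^{\pm 1}]_{i \in \ZZ, a \in \CC^{\ast}}$, and then to prove that the image $\phi_n(\chi_q(L(m)))$ coincides with $\chi_q(W)$ for some integrable $\U_q(sl_{n+1}^{tor})$-module $W$. A first reduction is to fundamental modules: every dominant monomial factors as a product $m=\prod_k Y_{i_k,a_k}$ and $L(m)$ embeds as the irreducible head of the tensor product of the associated fundamental modules $L(Y_{i_k,a_k})$. Since $\phi_n$ is a ring morphism and $\chi_q$ is multiplicative on tensor products, controlling $\phi_n(\chi_q L(Y_{i,a}))$ controls (up to correction terms) the general case.

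For fundamental modules one computes $\chi_q L(Y_{i,a})$ via a Frenkel--Mukhin-type algorithm adapted to the infinite rank case; the abstract suggests this combinatorics is governed by the crystal of semi-standard tableaux of type $A_\infty$. Each non-highest $\ell$-weight has the form $m \cdot \prod_{j,b} A_{j,b}^{-\alpha_{j,b}}$, and since $\phi_n$ sends $Y_{i,a} \mapsto Y_{\overline{i},a}$ and hence $A_{j,b} \mapsto A_{\overline{j},b}$, the infinite-rank algorithm projects step by step to the corresponding toroidal algorithm. This formal compatibility gives a candidate expression for $\phi_n(\chi_q L(m))$ as a sum of monomials in $\ZZ[Y_{i,a}^{\pm 1}]_{i \in I_n, a \in \CC^{\ast}}$ that are a priori eligible to appear in some toroidal $q$-character.

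The heart of the conjecture, and the main obstacle, is the presence of \emph{resonances}: distinct nodes $i,j \in \ZZ$ with $\overline{i}=\overline{j}$ can contribute $\ell$-weight monomials whose images coincide or whose spectral parameters line up in ways that violate the partial order used by Frenkel--Mukhin, so the image of a simple $q$-character need not be a simple toroidal $q$-character. One must therefore show that $\phi_n(\chi_q L(m))$ decomposes as a \emph{non-negative} integral combination of $q$-characters of simple toroidal modules $L(m')$ with $m' \leq \phi_n(m)$ in the Nakajima order. The cleanest route is probably the geometric one: via graded quiver varieties, the morphism $\phi_n$ corresponds to the infinite cyclic cover of the cyclic quiver of type $A_n^{(1)}$, and multiplicities of $q$-character monomials become Euler characteristics of strata that pull back compatibly along the cover, forcing non-negativity.

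An alternative, and the one pursued in this paper for a distinguished subfamily, is to realize the relevant toroidal modules directly as fusion products mirroring the $\U_q(\hat{sl}_\infty)$ construction, so that the $q$-character identity holds by construction. Extending such a realization from the extremal loop weight subfamily to all of $\mathcal{O}_{\mathrm{int}}$ is what I expect to be the most delicate technical step, since tensor product decompositions on the toroidal side are not known to match those on the infinite-rank side after applying $\phi_n$; one would have to combine the Frenkel--Mukhin reduction above with an induction on the dominance order on monomials, using the fusion-product construction to handle the fundamental case and geometric positivity to control the multiplicities in the induction step.
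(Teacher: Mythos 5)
The first thing to say is that the paper does not prove this statement: it is Hernandez's conjecture, restated here, and it remains open in the generality in which you attack it. What the paper proves (Theorem \ref{thmqchaactreptorinf}) is the conjecture's content for the single family of extremal fundamental loop weight modules $V(Y_{\ell,a}Y_{0,aq^{\ell}}^{-1})$, and only under the restriction (\ref{condexistmodinf}) on $(n,\ell)$; moreover the mechanism there is not a character algorithm at all. The paper first checks, using the characterization of images of $q$--characters from \cite[Theorem 4.2]{hernandez_algebra_2011} together with the monomial-crystal description of Remark \ref{remlienmoncriinf}, that $\phi_n\bigl(\chi_q(V(Y_{\ell,a}Y_{0,aq^{\ell}}^{-1}))\bigr)$ is at least a \emph{virtual} $q$--character; actualness is then obtained by exhibiting, via \cite[Proposition 4.4]{mansuy_extremal_2013}, a concrete $\U_q(sl_{n+1}^{tor})$-module inside a fusion product of specialized vector representations whose basis is labelled by the same set $\mathcal{T}_{[1,\ell]}$ and in which each vector $T_a$ has $\ell$-weight exactly $\phi_n(m_T)$, so that the $q$--character identity holds term by term by construction. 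Your fourth paragraph correctly identifies this mechanism, but as applied to the subfamily it is the \emph{entire} proof in the paper, not a fallback.

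The extension you sketch to all of $\mathcal{O}_{\mathrm{int}}$ has genuine gaps at each stage, which you partly acknowledge but do not close. First, the reduction to fundamental modules fails as stated: $\chi_q$ of the irreducible head of a tensor product is not determined by the factors, and the ``correction terms'' you defer are precisely the unknown quantity; the paper itself notes that no general $q$--character formula is known for $\ell$-highest weight modules of $\U_q(\hat{sl}_\infty)$, and no Frenkel--Mukhin-type algorithm has been established either in infinite rank or for $\U_q(sl_{n+1}^{tor})$, so your main computational tool is an unproved input. Second, the resonance/positivity step — which you rightly call the heart of the conjecture — is resolved only by a ``probably'' geometric argument; there is no quiver-variety positivity theorem for the cyclic cover $\ZZ \to \ZZ/(n+1)\ZZ$ of the kind you invoke, and nothing cited in this paper supplies it. Third, your induction is framed for $\ell$-highest weight modules ordered by dominance of monomials, whereas the modules for which the paper actually establishes the conjecture are not $\ell$-highest weight (the monomial $Y_{\ell,a}Y_{0,aq^{\ell}}^{-1}$ is not dominant), so the framework would not even reach the family treated here; relatedly, the conjecture only asks that $\phi_n(\chi_q(V))$ be the $q$--character of \emph{some} actual representation, and your stronger demand of a non-negative decomposition into simples with monomials below $\phi_n(m)$ makes the problem harder than necessary. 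In short, what you have is a plausible research program whose essential steps are themselves conjectural, not a proof, and the part that is sound — the fusion-product realization matching $q$--characters by construction — coincides exactly with the special case the paper proves.
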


\noindent The main motivation in \cite{hernandez_algebra_2011} is to predict $q$--character formulae for representations of $\U_q(sl_{n+1}^{tor})$. This conjecture is proved for the class of Kirillov-Reshetikhin modules of $\U_q(sl_{n+1}^{tor})$.

\medskip

The aim of this article is twofold. First construct extremal loop weight modules for $\U_q(\hat{sl}_\infty)$: we obtain here a large class of such modules with basis labelled by semi-standard tableaux. Second by using the combinatorial link with the quantum toroidal algebras, construct extremal loop weight modules for $\U_q(sl_{n+1}^{tor})$: we prove the conjecture above for the particular family of extremal fundamental loop weight modules. We recover in this way the extremal loop weight modules defined in \cite{mansuy_extremal_2013} for the quantum toroidal algebras.

\medskip

Let us explain this in more details. The construction of extremal loop weight modules is inspired by the original Kashiwara's study of the extremal weight modules (see \cite{kashiwara_crystal_1994}): we consider the fusion product of fundamental $\ell$-highest weight modules and fundamental $\ell$-lowest weight modules associated respectively to the fundamental weights  $\Lambda_\ell$ and $-\Lambda_0$ ($\ell \geq 1$) and to a non-zero complex parameter. The action of $\U_q(\hat{sl}_\infty)$ is defined by the Drinfeld coproduct, in the spirit of \cite{mansuy_extremal_2013}. We show that we get in this way extremal loop weight modules associated to the weight $\Lambda_\ell - \Lambda_0$, called extremal fundamental loop weight modules. By fusion product of $k$ extremal fundamental loop weight modules, we obtain extremal loop weight modules associated to the weight $k \Lambda_\ell - k \Lambda_0$ with basis labelled by the set of semi-standard tableaux $\mathcal{T}_{[1, \ell] \times [1, k]}$ of shape $(\ell \times k)$. Furthermore, the action of $\U_q(\hat{sl}_\infty)$ is explicitly describe on all these modules and involves the $\U_q(sl_\infty)$-crystal structure on $\mathcal{T}_{[1, \ell] \times [1, k]}$.

Our main motivation are applications to quantum toroidal algebras. We prove the conjecture above for the family of extremal fundamental loop weight modules of $\U_q(\hat{sl}_\infty)$ of weight $\Lambda_\ell - \Lambda_0$ introduced in this paper: we determine $q$--character formulae for these representations. Furthermore, we show that their image by the morphism $\phi_n$ is in fact the $q$--character of a representation of $\U_q(sl_{n+1}^{tor})$ constructed in \cite{mansuy_extremal_2013}. Recall that our original goal is to construct extremal loop weight modules. We show that we get in this way extremal loop weight modules if and only if
\begin{align*}
(\ell = 1) \text{ or } (n=2r+1 \text{ and } \ell=r+1).
\end{align*}

Let us explain another motivation to consider the algebra $\U_q(\hat{sl}_\infty)$. Recall that it is defined as limit of the quantum affine algebras $\U_q(\hat{sl}_{n+1})$ $(n \geq 2)$. For this reason the representation theory of this algebra is better understood than for general quantum affinizations: the fundamental modules we have to consider are thin, with basis labelled by semi-standard tableaux on which the action is explicitly known. It does not hold in the quantum toroidal case: in fact for $\U_q(sl_{n+1}^{tor})$ the fundamental modules are not thin (see \cite[Section 4.1]{hernandez_algebra_2011}). This is one of the main reason of the met difficulties in the study of extremal loop weight modules for the quantum toroidal algebras (besides see \cite[Remark 3.9]{mansuy_extremal_2013}). We will see that more precise results can be obtained in our construction of extremal loop weight modules for the quantum affinization $\U_q(\hat{sl}_\infty)$.

\medskip

The paper is organized as follows.

In Section 2 we recall the main definitions of the quantum group $\U_q(sl_\infty)$ and its quantum affinization $\U_q(\hat{sl}_\infty)$ and we briefly review their representation theory. In particular one defines the extremal weight modules and we introduce the notion of extremal loop weight modules for $\U_q(\hat{sl}_\infty)$. In Section 3 we construct the fusion product of fundamental $\ell$-highest weight modules and fundamental $\ell$-lowest weight modules associated respectively to the weights $\Lambda_\ell$ and $-\Lambda_0$ ($\ell \geq 1$) and to a non-zero complex parameter. We define a $\U_q(\hat{sl}_\infty)$-module structure on it by using the Drinfeld coproduct (Proposition \ref{tpexistudinf}). We obtain (as submodule of the fusion product) an extremal loop weight module associated to the weight $\Lambda_\ell - \Lambda_0$ (Theorem \ref{thmelwminf}), called the extremal fundamental loop weight module of weight $\Lambda_\ell - \Lambda_0$. Furthermore we explicit the action of $\U_q(\hat{sl}_\infty)$ on it (Theorem \ref{thmformactinf}). Section 4 is devoted to the study of fusion products of extremal fundamental loop weight modules. When the non-zero complex parameters are chosen generic, we get extremal loop weight modules (Theorem \ref{thmgencaseinf}). In the non-generic case we recover all the extremal fundamental loop weight modules from the fusion product of the extremal fundamental loop weight modules associated to the weight $\Lambda_1 - \Lambda_0$ and to a $q$-segment (Theorem \ref{thmtpvrisomeflwm}). By fusion product of $k$ extremal fundamental loop weight modules of weight $\Lambda_\ell - \Lambda_0$, we obtain extremal loop weight modules of the weights $k \Lambda_\ell - k \Lambda_0$ ($\ell, k \geq 1$) (Theorem \ref{thmtpelwminf}). In Section 5, we discuss the applications to quantum toroidal algebras. We show that the $q$--character formulae obtained from the extremal fundamental loop weight $\U_q(\hat{sl}_\infty)$-modules are the $q$--character of representations of the quantum toroidal algebra $\U_q(sl_{n+1}^{tor})$ (Theorem \ref{thmqchaactreptorinf}). Furthermore we determine when these representations give extremal loop weight modules (Theorem \ref{thmelwmtorinf}).

\medskip

\textbf{Acknowledgements :}
I would like to thank my advisor David Hernandez for his encouragements and for his precious comments. I am grateful to Nicolas Jacon for his interest in my work. I want also to thank Alexandre Bouayad and Dragos Fratila for their accurate remarks.

\section{Background}

\subsection{Cartan matrix} \nocite{kac_infinite-dimensional_1990}  Let $C=(C_{i,j})_{i,j\in \ZZ}$ be the Cartan matrix of type $A_\infty$, that is ($i,j \in \ZZ$)
$$ C_{i,i} = 2 \text{ , } C_{i,i+1} = C_{i+1, i} = -1 \text{ and } C_{i,j} = 0 \text{ otherwise. } $$
Consider the vector space
$$\Hlie = \bigoplus_{i \in \ZZ} \QQ h_i$$
and the linear functions $\Lambda_i$ (the fundamental weights) on $\Hlie$ given by 
$$\Lambda_i(h_j) = \delta_{i,j} \text{ for all } i, j \in \ZZ.$$
For $i \in \ZZ$, set $\alpha_i = 2 \Lambda_i - \Lambda_{i-1} - \Lambda_{i+1}$. Denote by $\Pi = \{\alpha_i , i \in \ZZ \}$ the set of simple roots and $\Pi^\vee = \{h_i, i \in \ZZ\}$ the set of simple coroots. Let $P =\bigoplus_{i \in \ZZ} \ZZ \Lambda_i$ be the weight lattice and $P^{+} = \bigoplus_{i \in \ZZ} \NN \Lambda_i$ the semigroup of dominant weights. Let $Q = \bigoplus_{i \in I} \ZZ \alpha_i \subseteq P$ (the root lattice) and $Q^+ = \bigoplus_{i \in I} \NN \alpha_i \subseteq Q$. For $\lambda, \mu \in \Hlie^{\ast}$, write $\lambda \geq \mu$ if $\lambda - \mu \in Q^{+}$.

Denote by $W$ the Weyl group of type $sl_\infty$: it is the subgroup of transformations stabilizing $P$ generated by the simple reflections $s_i$ defined by $s_i(\lambda)=\lambda-\lambda(h_i)\alpha_i$ ($i\in \ZZ, \lambda \in P$).

\subsection{Quantum group $\U_q(sl_\infty)$} In this article $q = e^{t} \in\CC^*$ $(t \in \CC)$ is not a root of unity and is fixed. For $l\in\ZZ, r\geq 0, m\geq m'\geq 0$ we set
$$[l]_q=\frac{q^l-q^{-l}}{q-q^{-1}}\in\ZZ[q^{\pm 1}],\  [r]_q!=[r]_q[r-1]_q\dots[1]_q,\ \begin{bmatrix}m\\m'\end{bmatrix}_q=\frac{[m]_q!}{[m-m']_q![m']_q!}.$$

\begin{defi} The quantum group $\U_q(sl_\infty)$ is the $\CC$-algebra with generators $k_h$ \linebreak $(h \in \Hlie)$, $x_i^{\pm}$ $(i\in \ZZ)$ and relations
\begin{equation*}k_h k_{h'} = k_{h+h'}, \  k_0 = 1,\end{equation*}
\begin{equation*}k_h x_j^{\pm}k_{-h}=q^{\pm \alpha_j(h)}x_j^{\pm},\end{equation*}
\begin{equation*}[x_i^+,x_j^-]=\delta_{i,j}\frac{k_i-k_{i}^{-1}}{q-q^{-1}},\end{equation*}
\begin{equation*}
(x_i^{\pm})^{(2)}x_{i+1}^{\pm} - x_i^{\pm}x_{i+1}^{\pm}x_i^{\pm} + x_{i+1}^{\pm}(x_i^{\pm})^{(2)} = 0,\end{equation*}
\end{defi}

\noindent where we use the notations $k_i^\pm = k_{\pm h_i}$ and for all $r \geq 0$, $(x_i^\pm)^{(r)} = \frac{(x_i^\pm)^r}{[r]_q!}$.

For $J = [a,b] \subset \ZZ$ denote by $\U_q(sl_\infty)_{J}$ the subalgebra of $\U_q(sl_\infty)$ generated by the $x_i^{\pm}, k_{h}$ for $i\in J$, $h \in \oplus_{j \in [a,b]} \QQ h_j$. Then $\U_q(sl_\infty)_{J}$ is isomorphic to the quantum group $\U_q(sl_{b-a+2})$.

Let $\U_q(sl_{\infty})^+$ (resp. $\U_q(sl_{\infty})^-$, $\U_q(\Hlie)$) be the subalgebra of $\U_q(sl_{\infty})$ generated by the $x_i^+$ (resp. the $x_i^-$, resp the $k_h$). We have a triangular decomposition of $\U_q(sl_{\infty})$ (see \cite{lusztig_introduction_1993}): 

\begin{thm} We have an isomorphism of vector spaces
$$\U_q(sl_{\infty}) \simeq \U_q(sl_{\infty})^- \otimes \U_q(\Hlie) \otimes \U_q(sl_{\infty})^+.$$
\end{thm}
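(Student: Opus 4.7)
The plan is to reduce the theorem to the classical finite-type triangular decomposition of Lusztig, exploiting the fact that $\U_q(sl_\infty)$ is a directed union of its subalgebras $\U_q(sl_\infty)_{[a,b]} \cong \U_q(sl_{b-a+2})$.

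First I would establish the surjectivity of the multiplication map
\[
\mu : \U_q(sl_\infty)^- \otimes \U_q(\Hlie) \otimes \U_q(sl_\infty)^+ \longrightarrow \U_q(sl_\infty).
\]
For this it suffices to check that the image of $\mu$ is stable under left and right multiplication by each of the generators $x_i^\pm$ and $k_h$. The commutation relation $k_h x_j^{\pm} = q^{\pm \alpha_j(h)} x_j^{\pm} k_h$ allows one to move any Cartan generator past the $x^{\pm}$ factors, while the cross relation $[x_i^+, x_j^-] = \delta_{i,j}(k_i - k_i^{-1})/(q - q^{-1})$ allows one to reorder any $x^+ x^-$ pair at the cost of producing a monomial of strictly smaller total degree in the Chevalley generators. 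An induction on this degree then gives surjectivity.

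For injectivity, I would use the nested structure of the algebra. For each finite interval $J = [a,b] \subset \ZZ$, the subalgebra $\U_q(sl_\infty)_J$ is isomorphic to $\U_q(sl_{b-a+2})$, for which Lusztig's theorem \cite{lusztig_introduction_1993} provides a triangular decomposition
\[
\U_q(sl_\infty)_J^- \otimes \U_q(\Hlie_J) \otimes \U_q(sl_\infty)_J^+ \xrightarrow{\sim} \U_q(sl_\infty)_J,
\]
where $\Hlie_J = \bigoplus_{i \in J} \QQ h_i$. These isomorphisms are compatible with the inclusions $J \subset J'$: the embeddings $\U_q(sl_\infty)_J \hookrightarrow \U_q(sl_\infty)_{J'}$ respect the three triangular components. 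Since the defining relations of $\U_q(sl_\infty)$ involve only finitely many indices at a time, one has $\U_q(sl_\infty) = \varinjlim_J \U_q(sl_\infty)_J \otimes_{\U_q(\Hlie_J)} \U_q(\Hlie)$, and similarly on each factor of the triple tensor product. Taking the direct limit of Lusztig's isomorphisms, extended trivially by the complementary Cartan generators, yields the desired isomorphism $\mu$.

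The main obstacle will be the book-keeping in the limit step: one must verify carefully that $\U_q(sl_\infty)$ together with its three triangular pieces is indeed the direct limit of the finite-type triples $(\U_q(sl_\infty)_J^{-}, \U_q(\Hlie_J), \U_q(sl_\infty)_J^{+})$, and that passing to the limit preserves the tensor product structure (which is clean here because tensoring over $\CC$ is an exact functor that commutes with filtered colimits). A slicker but less elementary alternative would be to exhibit a faithful representation of $\U_q(sl_\infty)$ on a polynomial-type module and detect the linear independence of PBW monomials directly; however, the direct-limit argument is the most transparent given that the finite-rank case is classical.
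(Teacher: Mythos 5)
The paper does not actually prove this theorem: it is quoted as standard, with a bare citation to Lusztig's book, and Lusztig's triangular decomposition is stated there for Cartan data with a \emph{finite} index set. Your direct-limit reduction is therefore a genuinely different (and more explicit) route, and it is essentially correct: surjectivity by straightening monomials with the relations $k_h x_j^{\pm}k_{-h}=q^{\pm\alpha_j(h)}x_j^{\pm}$ and $[x_i^+,x_j^-]=\delta_{i,j}(k_i-k_i^{-1})/(q-q^{-1})$ together with induction on degree, and injectivity by locating any element of $\ker\mu$ inside $\U_q(sl_\infty)_J^-\otimes\U_q(\Hlie_J)\otimes\U_q(sl_\infty)_J^+$ for a finite interval $J$ and invoking the finite-rank decomposition, using that tensoring over the field $\CC$ preserves injections and filtered colimits. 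What your approach buys is a rigorous bridge from the finite-rank literature to the infinite-rank statement, which the paper leaves implicit. Two caveats. First, your injectivity step leans on the isomorphism $\U_q(sl_\infty)_J\simeq\U_q(sl_{b-a+2})$; the paper asserts this just before the theorem, so you may cite it, but note that the usual proof of that isomorphism (injectivity of the map from the abstract finite-rank algebra onto the subalgebra) itself rests on a PBW or triangular-decomposition argument, so a fully self-contained treatment should either establish that isomorphism independently (e.g.\ via a faithful family of representations, as in your alternative) or simply run Lusztig's construction of the triple product verbatim for the infinite index set, where finiteness of the index set is never used. Second, the expression $\varinjlim_J \U_q(sl_\infty)_J\otimes_{\U_q(\Hlie_J)}\U_q(\Hlie)$ is both unnecessary and not well formed as an algebra colimit: since every $h\in\Hlie$ is by definition a finite $\QQ$-linear combination of the $h_i$, one has $\Hlie=\bigcup_J\Hlie_J$, hence $\U_q(sl_\infty)=\bigcup_J\U_q(sl_\infty)_J$ as a plain directed union, and likewise for each of the three triangular factors; the relative tensor product should simply be dropped.
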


\subsection{Representations of $\U_q(sl_\infty)$} For $V$ a representation of $\U_q(sl_\infty)$ and $\nu \in P$, the weight space $V_{\nu}$ of $V$ is
$$V_\nu = \{v\in V|k_i \cdot v = q^{\nu(h_i)}v, \forall i \in \ZZ \}.$$

\begin{defi}\label{defcato} A representation  $V$ is said to be in the category $\mathcal{O}$ if
\begin{enumerate}
\item[(i)] it admits a weight space decomposition $V = \bigoplus_{\nu\in P} V_\nu$,
\item[(ii)] $V_\nu$ is finite-dimensional for all $\nu$,
\item[(iii)] $\{\nu \vert V_\nu \neq \{0\} \} \subset \cup_{j = 1, \cdots, N} \{\nu \vert \nu \leq \lambda_j \}$ for some $\lambda_1, \cdots , \lambda_N \in P$.
\end{enumerate}
\end{defi}

For $\lambda\in P$, a representation $V$ is said to be of highest weight $\lambda$ if there is $v\in V_\lambda$ such that for all $i\in I, x_i^+ \cdot v = 0$ and $\U_q(sl_\infty) \cdot v = V$. Such a representation is in the category $\mathcal{O}$. Furthermore there is a unique simple highest weight module of highest weight $\lambda$.

\begin{defi}\label{defint} A representation  $V$ is said to be integrable if
\begin{enumerate}
\item[(i)] it admits a weight space decomposition $V = \bigoplus_{\nu\in P} V_\nu$,
\item[(ii)] all the $x_i^\pm$ ($i \in \ZZ$) are locally nilpotent.
\end{enumerate}
\end{defi}

\begin{thm}\cite{lusztig_introduction_1993, hernandez_algebra_2011}
The simple highest weight module of highest weight $\lambda$ is integrable if and only if $\lambda$ is dominant. It is denoted $V(\lambda)$.
\end{thm}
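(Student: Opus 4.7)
The plan is to prove the two implications separately, the forward direction (integrable $\Rightarrow$ dominant) being straightforward and the converse being the real work.

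For the forward direction, assume $V(\lambda)$ is integrable with highest weight vector $v_\lambda$. For each $i \in \ZZ$, restrict attention to the subalgebra $\U_q(sl_\infty)_{\{i\}} \simeq \U_q(sl_2)$ generated by $x_i^\pm, k_i^{\pm 1}$. Since $x_i^+ \cdot v_\lambda = 0$ and $k_i \cdot v_\lambda = q^{\lambda(h_i)} v_\lambda$, the vector $v_\lambda$ generates a highest weight $\U_q(sl_2)$-module of highest weight $\lambda(h_i)$. Local nilpotency of $x_i^-$ on $v_\lambda$ together with the standard $\U_q(sl_2)$-representation theory forces $\lambda(h_i) \in \NN$, hence $\lambda \in P^+$.

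For the converse, assume $\lambda \in P^+$. A weight decomposition of $V(\lambda)$ is automatic since the triangular decomposition gives $V(\lambda) = \U_q(sl_\infty)^- \cdot v_\lambda$ and the weights of $V(\lambda)$ lie in $\lambda - Q^+$. Local nilpotency of $x_i^+$ is then immediate: for any weight vector $v \in V(\lambda)_\mu$, the vectors $(x_i^+)^n v$ have weight $\mu + n \alpha_i$, and since only finitely many such weights lie in $\lambda - Q^+$ and each weight space is finite-dimensional, $(x_i^+)^n v = 0$ for $n$ large enough. The real content is local nilpotency of $x_i^-$.

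The main obstacle is to establish that $(x_i^-)^{\lambda(h_i)+1} v_\lambda = 0$ in $V(\lambda)$. First I would show that $w := (x_i^-)^{\lambda(h_i)+1} v_\lambda$ is annihilated by every $x_j^+$: for $j \neq i, i\pm 1$ this is trivial by commutativity of $x_i^-$ with $x_j^+$; for $j = i$ this follows from the $\U_q(sl_2)$-commutation identity
$$[x_i^+, (x_i^-)^{n+1}] = (n+1) (x_i^-)^n \frac{q^{-n} k_i - q^n k_i^{-1}}{q - q^{-1}}$$
applied to $v_\lambda$ with $n = \lambda(h_i)$; and for $j = i\pm 1$ one uses the quantum Serre relations to move the single $x_{i\pm 1}^+$ past the block of $x_i^-$'s. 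Hence $w$ generates a proper submodule of $V(\lambda)$, but $V(\lambda)$ is simple and $w \in V(\lambda)_{\lambda - (\lambda(h_i)+1)\alpha_i} \neq V(\lambda)_\lambda$, so $w = 0$.

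Finally I would upgrade this from $v_\lambda$ to all of $V(\lambda)$. Let $N \subseteq V(\lambda)$ denote the subspace of vectors on which each $x_i^-$ acts locally nilpotently; combined with local nilpotency of $x_i^+$, standard arguments (the Joseph--Lusztig lemma: the set of vectors on which both $x_i^+$ and $x_i^-$ act locally nilpotently is stable under all $x_j^\pm$) show that $N$ is a nonzero $\U_q(sl_\infty)$-submodule of $V(\lambda)$, hence equals $V(\lambda)$ by simplicity. This establishes integrability, completing the proof.
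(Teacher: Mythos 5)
The paper does not actually prove this statement: it is quoted from \cite{lusztig_introduction_1993, hernandez_algebra_2011}, so there is no internal proof to compare against. Your argument is the standard one from those references and is essentially correct: the forward direction via $\U_q(sl_2)$-theory applied to $v_\lambda$; the vanishing of $w=(x_i^-)^{\lambda(h_i)+1}\cdot v_\lambda$ by exhibiting it as a vector of weight strictly below $\lambda$ annihilated by all $x_j^+$, hence generating a proper submodule of a simple module; and the propagation of local nilpotency from $v_\lambda$ to all of $V(\lambda)$ via the standard stability lemma. Three local points are worth correcting. First, in checking $x_j^+\cdot w=0$ you invoke the quantum Serre relations for $j=i\pm 1$; the Serre relations involve only same-sign generators and cannot be used to move $x_{i\pm 1}^+$ past a block of $x_i^-$'s. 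What you actually need is just the defining relation $[x_j^+,x_i^-]=0$ for \emph{all} $j\neq i$ (including $j=i\pm 1$), so this case is as trivial as the generic one. Second, the $sl_2$ commutation identity should carry the quantum integer $[n+1]_q$ rather than $(n+1)$, namely $[x_i^+,(x_i^-)^{n+1}]=[n+1]_q\,(x_i^-)^{n}\,\frac{q^{-n}k_i-q^{n}k_i^{-1}}{q-q^{-1}}$; this is harmless here, since with $n=\lambda(h_i)$ the Cartan factor already annihilates $v_\lambda$. Third, in the final step simplicity is not needed a second time: once $v_\lambda$ lies in the submodule $N$ of vectors on which each $x_i^{\pm}$ is locally nilpotent, $N=V(\lambda)$ follows from cyclicity alone (similarly, for local nilpotency of $x_i^+$ the finite-dimensionality of weight spaces is superfluous: the constraint $\mu+n\alpha_i\in\lambda-Q^+$ alone bounds $n$). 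With these repairs your proof is a complete and faithful rendering of the cited standard argument.
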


Now we recall the definition and some properties of extremal weight modules for the quantum group $\U_q(sl_\infty)$ \cite{kashiwara_crystal_1994}.

\begin{defi} For an integrable $\U_q(sl_\infty)$-module $V$ and $\lambda\in P$, a vector $v\in V_{\lambda}$ is called $i$-extremal if $x_i^+ \cdot v=0$ or $x_i^- \cdot v = 0$. In this case we set $$S_i(v) = (x_i^-)^{(\lambda(h_i))} \cdot v \text{ or } S_i(v) = (x_i^+)^{(-\lambda(h_i))} \cdot v.$$ A vector $v\in V_{\lambda}$ is called extremal of weight $\lambda$ if, for any $l \geq 1$, $S_{i_1} \cdots S_{i_l}(v)$ is $i$-extremal for any $i, i_1, \cdots, i_l \in \ZZ$. \end{defi}

\noindent For $v$ an extremal vector of weight $\lambda$, we set $$W \cdot v = \{S_{i_1} \cdots S_{i_l}(v) \vert l \in \NN, i_1, \cdots i_l \in \ZZ \}.$$

\begin{defi} For $\lambda\in P$, the extremal weight module of extremal weight $\lambda$ is the $\U_q(sl_\infty)$-module generated by a vector $v_{\lambda}$ with the defining relations that $v_{\lambda}$ is extremal of weight $\lambda$. It is denoted $V(\lambda)$.\end{defi}

\begin{ex} If $\lambda$ is dominant, $V(\lambda)$ is the simple highest weight module of highest weight $\lambda$. \end{ex}

\begin{thm} \cite{kashiwara_crystal_1994} For $\lambda\in P$, the module $V(\lambda)$ is integrable and has a crystal basis $\mathcal{B}(\lambda)$.\end{thm}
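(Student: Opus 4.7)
The plan is to reduce to the classical theory of irreducible highest and lowest weight modules, exploiting two features special to $sl_\infty$: every $\lambda \in P$ has finite support, and the Dynkin diagram $A_\infty$ has no imaginary roots.

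First, I would establish the Weyl group symmetry. The extremal relations are permuted by the reflections $s_i$: if $v_\lambda$ generates $V(\lambda)$ with the defining extremal relations, then $S_i(v_\lambda) \in V(\lambda)_{s_i \lambda}$ is again extremal, yielding an isomorphism $V(\lambda) \cong V(s_i \lambda)$ that exchanges the two generators. Iterating, $V(\lambda) \cong V(w \lambda)$ for every $w \in W$. Since $\lambda \in P$ has finite support, I can choose $w$ in the (infinite) symmetric group $W$ so that $w \lambda = \mu^+ + \mu^-$ with $\mu^+ \in P^+$ supported on indices $\geq N$ and $-\mu^- \in P^+$ supported on indices $\leq -N$ for arbitrarily large $N$. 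Thus we may assume $\lambda$ itself has such a decomposition, with a large gap between the supports of $\mu^+$ and $\mu^-$.

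Second, I would construct a $\U_q(sl_\infty)$-linear map
\[
\Phi : V(\lambda) \longrightarrow V(\mu^+) \otimes V(\mu^-), \qquad v_\lambda \longmapsto v_{\mu^+} \otimes v_{\mu^-},
\]
where $V(\mu^+)$ is the simple integrable highest weight module of highest weight $\mu^+$ and $V(\mu^-)$ is the simple integrable lowest weight module of lowest weight $\mu^-$ (obtained from the Cartan involution of the highest weight module $V(-w_0 \mu^-)$ in a suitable finite subalgebra). The key point is to verify that $v_{\mu^+} \otimes v_{\mu^-}$ satisfies the defining extremal relations of $V(\lambda)$ in the tensor product. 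Thanks to the separation of supports, for each $i \in \ZZ$ one of the factors is annihilated by $x_i^+$ and the other by $x_i^-$ outside a bounded range, so the action of any monomial $S_{i_1} \cdots S_{i_l}$ factors through the tensor product, and each iterated reflection lands on a vector which is again $i$-extremal on each tensor factor separately.

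Third, the tensor product $V(\mu^+) \otimes V(\mu^-)$ is integrable and admits a crystal basis $\mathcal{B}(\mu^+) \otimes \mathcal{B}(\mu^-)$ by the classical theorems of Lusztig and Kashiwara combined with the tensor product rule for crystal bases. The extremal weight module $V(\lambda)$ can then be identified with the $\U_q(sl_\infty)$-submodule generated by $v_{\mu^+} \otimes v_{\mu^-}$; its crystal basis $\mathcal{B}(\lambda)$ is the connected component of the highest/lowest weight tensor $u_{\mu^+} \otimes u_{\mu^-}$ in the tensor product crystal. Integrability of $V(\lambda)$ follows immediately from integrability of the ambient tensor product.

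The main obstacle is the second step: showing that $\Phi$ is well-defined (no hidden extremal relation is violated) and that the source is really $V(\lambda)$ rather than a proper quotient. This is precisely where the support-separation hypothesis, and the absence of imaginary roots in $A_\infty$, become indispensable: without them, the interaction between the highest and lowest weight components could produce additional relations on the extremal orbit $W \cdot v_\lambda$, as happens for instance in the affine setting and forces a much more delicate analysis.
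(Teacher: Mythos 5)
There is a genuine gap, and it is worth noting first that the paper itself offers no proof of this statement: it is quoted from \cite{kashiwara_crystal_1994}, so the only meaningful comparison is with Kashiwara's original argument (which the paper gestures at later when invoking \cite[Lemma 8.2.1]{kashiwara_crystal_1994}). Measured against that, your proposal breaks at its very first step: it is not true that every $\lambda \in P$ is $W$-conjugate to a weight $\mu^+ + \mu^-$ with $\mu^+ \in P^{+}$ and $-\mu^- \in P^{+}$, with or without the support gap. To see the obstruction, encode $\lambda = \sum_i c_i \Lambda_i$ by the sequence $x_j = \sum_{i \geq j} c_i$ ($j \in \ZZ$), which equals $0$ for $j \gg 0$ and $c = \sum_i c_i$ for $j \ll 0$; one checks that $s_k$ exchanges $x_k$ and $x_{k+1}$, so $W$ acts by finitely supported permutations of this sequence, and both the tail values and the multiset of entries are orbit invariants. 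A weight of your target form has a unimodal sequence (it increases from the tail $c$ to a peak, then decreases to the tail $0$), hence all its entries are $\geq \min(c,0)$; in the opposite orientation all entries are $\leq \max(c,0)$. But already $\lambda = \alpha_1 = -\Lambda_0 + 2\Lambda_1 - \Lambda_2$ has both tails $0$ and entries $x_1 = +1$, $x_2 = -1$, so its $W$-orbit contains no weight of either form. (Your reduction happens to be harmless only for weights like $k\Lambda_\ell - k\Lambda_0$ used in this paper, which are visibly dominant-plus-antidominant as written.)

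The second step also rests on a false mechanism even when a separated decomposition exists: separation of supports is not preserved along the extremal orbit, since the weights $w\mu^+$ of the vectors $S_{i_1}\cdots S_{i_l}(v_{\mu^+})$ have support reaching arbitrarily far from $N$ (the $W$-orbit of $\Lambda_N$ contains weights supported near any index), so after finitely many reflections both tensor factors are active at the same node and the claim that ``one of the factors is annihilated by $x_i^{\pm}$ outside a bounded range'' fails. What replaces all of this in Kashiwara's proof is precisely \cite[Lemma 8.2.1]{kashiwara_crystal_1994}: for the \emph{canonical} decomposition $\lambda = \lambda_+ + \lambda_-$, where $\lambda_+$ (resp.\ $\lambda_-$) collects the positive (resp.\ negative) coefficients of $\lambda$ --- available for every $\lambda \in P$ with no Weyl twist and no gap between supports --- the vector $u_{\lambda_+} \otimes u_{\lambda_-}$ is extremal and generates a submodule isomorphic to $V(\lambda)$. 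Canonicity is essential: for the non-canonical splitting $0 = \Lambda_1 + (-\Lambda_1)$, the vector $u_{\Lambda_1} \otimes u_{-\Lambda_1}$ is not even $1$-extremal, as the coproduct makes both $x_1^{+}$ and $x_1^{-}$ act nontrivially on it. Finally, two substantive points that your sketch treats as automatic are exactly where the work lies in \cite{kashiwara_crystal_1994}: that the natural surjection from $V(\lambda)$ onto the generated submodule is injective rather than a proper quotient, and that this submodule inherits a crystal basis --- crystal lattices do not restrict to submodules by soft arguments; Kashiwara establishes both through the modified algebra $\tilde{\U}_q$ and its global basis. Your closing remark that the absence of imaginary roots is indispensable is also a red herring: the lemma and the theorem hold for arbitrary symmetrizable Kac--Moody algebras, affine ones included.
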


\subsection{Quantum affinization $\U_q(\hat{sl}_\infty)$}

We recall the definition of the quantum affinization $\U_q(\hat{sl}_\infty)$ (without central charge) in terms of currents.

\begin{defi}\label{defqaainf}
The quantum affinization $\U_q(\hat{sl}_\infty)$ is the $\CC$-algebra with generators $x_{i,r}^\pm$  ($i \in \ZZ, r \in \ZZ$), $k_h$ ($h \in \Hlie$), $h_{i,m}$ ($i \in \ZZ, m \in \ZZ-\{0\}$) and the following defining relations ($i,j \in \ZZ, h, h' \in \Hlie$):
\begin{equation*}
k_h k_{h'} = k_{h+h'}, \  k_0 = 1,
\end{equation*}
\begin{equation*}
\phi^\pm_i(z)\phi^\pm_j (w) = \phi^\pm_j(w)\phi^\pm_i (z) \text{ , } \phi^-_i(z)\phi^+_j (w) = \phi^+_j(w)\phi^-_i (z),
\end{equation*}
\begin{equation}\label{relxcartpl}
(w -q^{\pm C_{ij}}z) \phi_i^+(z)x_j^\pm(w)= 
(q^{\pm C_{ij}}w - z) x_j^\pm(w)\phi_i^+(z),
\end{equation}
\begin{equation}\label{relxcartmo}
(w -q^{\pm C_{ij}}z) \phi_i^-(z)x_j^\pm(w)= 
(q^{\pm C_{ij}}w - z) x_j^\pm(w)\phi_i^-(z),
\end{equation}
\begin{equation*}
[x_i^+(z),x_j^-(w)]=\frac{\delta_{i,j}}{q-q^{-1}}(\delta\bigl(\frac{w}{z}\bigr)\phi_i^+(w) -\delta\bigl(\frac{z}{w}\bigr)\phi_i^-(z)),
\end{equation*}
\begin{equation*}
(z-q^{\pm C_{ij}}w) x_i^\pm(z)x_j^\pm(w)=(q^{\pm a_{ij}}z-w)x_j^\pm(w)x_i^\pm(z),
\end{equation*}
\begin{eqnarray*}
\begin{array}{c}
x_i^\pm(z_1)x_i^\pm(z_2)x_{i\pm1}^\pm(w)-(q+q^{-1})x_i^\pm(z_1)x_{i\pm 1}^\pm(w) x_i^\pm(z_2) \\
+x_{i\pm1}^\pm(w) x_i^\pm(z_1) x_i^\pm(z_2) + (z_1\leftrightarrow  z_2)=0,
\end{array}
\end{eqnarray*}
and $[x_i^\pm(z),x_j^\pm(w)]=0$ for $i \neq j,j \pm 1$.

Here we use the formal power series $\delta(z) = \sum_{s \in \ZZ} z^{s}$ and

\begin{eqnarray*}
x_i^\pm(z) = \sum_{r \in \ZZ} x_{i,r}^\pm z^{r},
\end{eqnarray*}
\begin{eqnarray*}
\phi_i^{\pm}(z) = \sum_{m \geq 0} \phi_{i,\pm m}^\pm z^{\pm m} = k_i^{\pm 1} \exp(\pm(q-q^{-1}) \sum_{m' \geq 1} h_{i, \pm m'} z^{\pm m'}).
\end{eqnarray*}
\end{defi}


There is an algebra morphism $\U_q(sl_\infty)\rightarrow \U_q(\hat{sl}_\infty)$ defined by $k_h\mapsto k_h$, $x_i^{\pm}\mapsto x_{i,0}^{\pm}$ ($h \in \Hlie, i \in \ZZ$). Its image is called the horizontal quantum affine subalgebra of $\U_q(\hat{sl}_\infty)$ and is denoted by $\U_q^{h}(\hat{sl}_\infty)$. In particular, a $\U_q(\hat{sl}_\infty)$-module $V$ has also a structure of a $\U_q(sl_\infty)$-module. We denote by $\mathrm{Res}(V)$ the restricted $\U_q(sl_\infty)$-module obtained from $V$.

For $J = [a,b] \subseteq \ZZ$, let us denote by $ \U_{q}(\hat{sl}_\infty)_J $ the subalgebra of $ \U_{q}(\hat{sl}_\infty) $ generated by the $x_{i,r}^{\pm}$, $k_h$, $h_{i,m}$ ($i\in J$, $r\in\ZZ$, $m\in\ZZ-\{0\}$, $h \in \oplus_{j \in J} \QQ h_j$). It is isomorphic to the quantum affine algebra $\U_q(\hat{sl}_{b-a+2})'$ \cite{beck_braid_1994, drinfeld_new_1988}.

For $i\in \ZZ$, the subalgebra $ \hat{\U_i} $ generated by the $x_{i,r}^{\pm}, h_{i,m}, k_{p h_i}$ ($ r \in \ZZ$, $m \in \ZZ - \{0\}$, $p \in \QQ$) is isomorphic to $ \U_{q}(\hat{sl}_{2})'$.

We have a triangular decomposition of $\U_q(\hat{sl}_\infty)$.

\begin{thm}\label{dtrianinf} \cite{miki_representations_2000, nakajima_quiver_2001} We have an isomorphism of vector spaces
$$\U_q(\hat{sl}_\infty)\simeq \U_q(\hat{sl}_\infty)^-\otimes\U_q(\hat{\Hlie})\otimes\U_q(\hat{sl}_\infty)^+,$$ 
where $\U_q(\hat{sl}_\infty)^{\pm}$ (resp. $\U_q(\hat{\Hlie})$) is generated by the $x_{i,r}^{\pm}$ (resp. the $k_h$, the $h_{i,m}$).
\end{thm}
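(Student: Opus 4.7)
The plan is to establish the isomorphism by showing the multiplication map
$$m : \U_q(\hat{sl}_\infty)^{-} \otimes \U_q(\hat{\Hlie}) \otimes \U_q(\hat{sl}_\infty)^{+} \longrightarrow \U_q(\hat{sl}_\infty)$$
is both surjective and injective, using as the main input the already-known triangular decomposition of the finite-type subalgebras $\U_q(\hat{sl}_\infty)_J$ recalled just before the theorem.

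For surjectivity, I would argue directly from the defining relations in Definition \ref{defqaainf}. Any monomial in the generators $x_{i,r}^\pm$, $h_{i,m}$, $k_h$ can be rewritten as a linear combination of ordered products of the form $y^- \cdot h \cdot y^+$. The relations (\ref{relxcartpl}) and (\ref{relxcartmo}), read mode-by-mode, let one commute each $x_{j,s}^\pm$ past each $\phi_{i,\pm m}^\pm$ at the cost of finitely many new terms; the cross-commutation $[x_i^+(z),x_j^-(w)]$ produces only elements in $\U_q(\hat{\Hlie})$ (namely the modes of $\phi_i^\pm$), so every inversion $x^+ x^-\mapsto x^- x^+$ stays within the span of the ordered products. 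A standard double induction on the $Q$-weight of the monomial and on the number of ill-ordered pairs shows that the rewriting terminates, which gives surjectivity of $m$.

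For injectivity, I would exploit the fact that for each finite interval $J=[a,b]\subset \ZZ$ the subalgebra $\U_q(\hat{sl}_\infty)_J$ is isomorphic to the quantum affine algebra $\U_q(\hat{sl}_{b-a+2})'$, which has a triangular decomposition by the classical results (Beck, Drinfeld). Note that $\U_q(\hat{sl}_\infty)=\varinjlim_{J}\U_q(\hat{sl}_\infty)_J$ because every element is a polynomial in only finitely many generators, and the inclusions $J\subset J'$ induce algebra embeddings compatible with the three pieces $\pm$ and $\hat{\Hlie}$. Hence the map $m$ is the colimit of the multiplication maps $m_J$ for the finite-type subalgebras. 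Since each $m_J$ is a linear bijection and since filtered colimits commute with tensor products of vector spaces, the colimit $m$ is itself a bijection.

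The main obstacle I anticipate is the handling of the Cartan part $\U_q(\hat{\Hlie})$: its generators include the $k_h$ for $h$ ranging over the infinite-rank space $\Hlie$, which is not literally the union of the Cartan subalgebras $\U_q(\hat{\Hlie})_J$ indexed by finite intervals. One has to verify that nonetheless $\U_q(\hat{\Hlie})=\varinjlim_J \U_q(\hat{\Hlie})_J$, which holds because every $h\in\Hlie$ already lies in $\bigoplus_{i\in J}\QQ h_i$ for some finite $J$, and every algebra element is a finite linear combination of monomials. Once this identification is in place, the colimit argument above closes the proof; the verification of compatibility of the surjective rewriting with the finite-$J$ decompositions (so that the rewritten ordered form depends only on the smallest $J$ containing the original generators) is routine but is the point where care is needed.
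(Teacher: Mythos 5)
Your proposal is correct, but it does not follow the paper, because the paper offers no proof at all: Theorem \ref{dtrianinf} is simply quoted from the literature (Miki, Nakajima), where the triangular decomposition is established directly on the full algebra by a PBW-type straightening argument of the kind you sketch in your surjectivity step. Your route — reduce to the finite-rank subalgebras $\U_q(\hat{sl}_\infty)_J \simeq \U_q(\hat{sl}_{b-a+2})'$, invoke the classical triangular decomposition of quantum affine algebras in the Drinfeld presentation (Beck), and pass to the filtered colimit — is a genuinely different and valid argument, and it fits the spirit of the paper better than the cited proofs do: the same inductive-limit philosophy underlies Theorem \ref{thmlimind} and the proof of the proposition on $V(m)$ being extremal. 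Two remarks. First, your separate rewriting argument for surjectivity is redundant: once you know $\U_q(\hat{sl}_\infty)=\bigcup_J \U_q(\hat{sl}_\infty)_J$ and each $m_J$ is bijective onto $\U_q(\hat{sl}_\infty)_J$, surjectivity of $m$ is automatic; and as stated the rewriting induction is slightly off, since the defining relations are $Q$-homogeneous, so the $Q$-weight never decreases under straightening moves — the correct decreasing measure is the number of $x^{\pm}$-factors (which drops when a cross-commutator $[x_{i,r}^{+},x_{i,s}^{-}]$ produces a Cartan term), refined for fixed length by the number of ill-ordered pairs. Second, be aware of what you are importing: the injectivity of each $m_J$ requires not only the triangular decomposition of the abstract algebra $\U_q(\hat{sl}_{b-a+2})'$ but also that the subalgebra $\U_q(\hat{sl}_\infty)_J$ satisfies no extra relations, i.e.\ the isomorphism $\U_q(\hat{sl}_\infty)_J \simeq \U_q(\hat{sl}_{b-a+2})'$ recorded (with citations to Beck and Drinfeld) just before the theorem; this is itself a nontrivial statement, essentially of the same depth as a PBW theorem, so your proof is legitimately conditional on that background but is not more elementary than it. Your handling of the Cartan part and of the compatibilities (pieces generated by generators are preserved by the inclusions $J\subset J'$; filtered colimits commute with finite tensor products over a field and tensor products of injections stay injective) is exactly the care the argument needs.
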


\subsection{Representations of $\U_q(\hat{sl}_\infty)$}

\begin{defi} A representation $V$ of $\U_q(\hat{sl}_\infty)$ is said to be integrable (resp. in the category $\mathcal{O}$) if $\mathrm{Res}(V)$ is integrable (resp. in the category $\mathcal{O}$) as a $\U_q(sl_\infty)$-module. One denotes by $\mathcal{O}_{\mathrm{int}}$ the category of integrable representations of $\U_q(\hat{sl}_\infty)$ in the category $\mathcal{O}$.
\end{defi}

\begin{defi} A representation $V$ of $\U_q(\hat{sl}_\infty)$ is said to be of $\ell$-highest weight if there is $v\in V$ such that 
\begin{enumerate}
\item[(i)] $V = \U_q(\hat{sl}_\infty)^- \cdot v$, 
\item[(ii)] $\U_q(\hat{\Hlie}) \cdot v=\CC v$, 
\item[(iii)] for any $i\in \ZZ, r\in\ZZ$, $x_{i,r}^+ \cdot v=0$.
\end{enumerate}
\end{defi}

\noindent Such a representation is in the category $\mathcal{O}$. For $\gamma \in \mathrm{Hom}(\U_q(\hat{\Hlie}), \CC)$, by Theorem \ref{dtrianinf} we have a corresponding Verma module $M(\gamma)$ and a simple representation $V(\gamma)$ which are $\ell$-highest weight.

\begin{thm}\label{cond} \cite{hernandez_algebra_2011} The simple representations $V(\gamma)$ of $\U_q(\hat{sl}_\infty)$ are integrable if there is $(P_i)_{i \in \ZZ}\in (1+u\mathbb{C}[u])^{(\ZZ)}$ satisfying for $i\in \ZZ$ the relation in $\mathbb{C}[[z]]$ (resp. in $\mathbb{C}[[z^{-1}]]$)
$$\gamma(\phi_i^\pm(z))=q^{\text{deg}(P_i)}\frac{P_i(zq^{-1})}{P_i(zq)}.$$
\end{thm}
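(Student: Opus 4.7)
The plan is to reduce the problem to the rank-one case, i.e., to the classification of integrable $\ell$-highest weight modules of $\U_q(\hat{sl}_2)'$, by exploiting the subalgebras $\hat{\U_i}\simeq\U_q(\hat{sl}_2)'$ attached to each node $i\in\ZZ$. Thanks to the triangular decomposition (Theorem \ref{dtrianinf}), the Verma module $M(\gamma)$ and hence its simple quotient $V(\gamma)$ inherit an automatic weight space decomposition with weights lying in $P$. Therefore only the local nilpotency of each $x_{i,0}^{\pm}$ needs to be established in order to verify integrability in the sense of Definition \ref{defint}.

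First I would analyse the cyclic vector $v_\gamma$ under $\hat{\U_i}$. Since $x_{i,r}^+\cdot v_\gamma=0$ and $v_\gamma$ is an eigenvector for the $\phi_i^\pm(z)$, the submodule $\hat{\U_i}\cdot v_\gamma\subseteq V(\gamma)$ is an $\ell$-highest weight $\hat{\U_i}$-module whose Drinfeld series are, by hypothesis, the expansions of $q^{\deg P_i}P_i(zq^{-1})/P_i(zq)$. By the Chari--Pressley criterion for $\U_q(\hat{sl}_2)'$, the simple quotient of this module is integrable, with $(x_{i,0}^-)^{\deg P_i+1}$ annihilating the $\ell$-highest weight vector. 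Pulling back to the Verma module, the vector $(x_{i,0}^-)^{\deg P_i+1}\cdot v_\gamma$ can be shown to be singular (annihilated by every $x_{j,r}^+$) using the Drinfeld relations and the standard rank-one computation, so it lies in the maximal proper submodule of $M(\gamma)$ and vanishes in $V(\gamma)$. In particular $\hat{\U_i}\cdot v_\gamma$ is already integrable as a $\hat{\U_i}$-module.

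Next I would propagate local nilpotency from $v_\gamma$ to all of $V(\gamma)$. For fixed $i$, set
\[
W_i^-=\{w\in V(\gamma)\mid (x_{i,0}^-)^N\cdot w=0 \text{ for some } N\geq 0\},
\]
and similarly $W_i^+$ for $x_{i,0}^+$. Both contain $v_\gamma$ by the previous step and are stable under the $\hat{\U_i}$-action because $\hat{\U_i}\cdot v_\gamma$ is an integrable $\U_q(\hat{sl}_2)'$-module. For $j$ with $C_{ij}=0$ the generators $x_{j,s}^{\pm}$ commute with $x_{i,0}^{\pm}$ by the Drinfeld relations, so $W_i^\pm$ is stable under them. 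The delicate case $j=i\pm 1$ is handled by the $q$-Serre relations at loop level appearing in Definition \ref{defqaainf}: they force $(x_{i,0}^\pm)^N$ to move past $x_{j,s}^\pm$ up to terms already known to be locally nilpotent, yielding stability of $W_i^\pm$ under the whole algebra. Since $W_i^\pm$ is a non-zero $\U_q(\hat{sl}_\infty)$-submodule of the simple module $V(\gamma)$, it equals $V(\gamma)$.

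The main obstacle is the propagation step, in particular verifying that the subspace $W_i^\pm$ is indeed stable under $x_{i\pm 1,s}^{\pm}$ for \emph{all} $s\in\ZZ$. This is not a finite-generator check: it requires exploiting the $q$-Serre relations for the currents $x_i^\pm(z)$ together with the $\phi^\pm$-$x^\pm$ commutation relations (\ref{relxcartpl})--(\ref{relxcartmo}) in order to control infinitely many loop generators at once. This is the analogue, in the setting of the quantum affinization $\U_q(\hat{sl}_\infty)$, of the standard Chari--Pressley argument for quantum affine algebras, and it is precisely where the triangular decomposition of Theorem \ref{dtrianinf} and the explicit form of the Serre relations of Definition \ref{defqaainf} must be combined.
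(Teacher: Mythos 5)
First, a point of comparison: the paper does not prove Theorem \ref{cond} at all --- it is quoted from \cite{hernandez_algebra_2011} --- so your attempt can only be judged against the standard proof strategy and the ingredients the paper itself supplies (Theorems \ref{dtrianinf} and \ref{thmlimind}). Your rank-one step is sound: restricting to $\hat{\U_i}\simeq \U_q(\hat{sl}_2)'$ and invoking the Chari--Pressley computation to show that $(x_{i,0}^-)^{\deg P_i+1}\cdot v_\gamma$ is singular (it is killed by $x_{j,r}^+$ for $j\neq i$ by the commutation $[x_i^+(z),x_j^-(w)]=0$, and by $x_{i,r}^+$ by the rank-one identity) is exactly the classical argument. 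The genuine gap is in the propagation step, and it sits precisely where you did not look. The subspace $W_i^-$ must be stable under \emph{all} generators, and the cases you treat are the easy ones: $C_{ij}=0$ by commutation, and $j=i\pm1$ by the loop $q$-Serre relations, which indeed give $(x_{i,0}^-)^2x_{j,s}^- -(q+q^{-1})x_{i,0}^-x_{j,s}^-x_{i,0}^- + x_{j,s}^-(x_{i,0}^-)^2=0$ uniformly in $s$. What is missing is stability under the \emph{same-node} loop modes $x_{i,s}^{\pm}$ with $s\neq 0$, and under the $h_{j,m}$: the current relation for $x_i^-(z)x_i^-(w)$ only $q$-permutes modes and does not give nilpotency of $\mathrm{ad}\,x_{i,0}^-$ on $x_{i,s}^-$, while $[h_{j,m},x_{i,0}^-]$ reintroduces $x_{i,m}^-$ and runs into the same problem. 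Your justification for this case --- that $W_i^{\pm}$ is stable under $\hat{\U_i}$ ``because $\hat{\U_i}\cdot v_\gamma$ is an integrable $\U_q(\hat{sl}_2)'$-module'' --- is circular: $W_i^{\pm}$ contains vectors far outside the cyclic submodule $\hat{\U_i}\cdot v_\gamma$, and integrability of that submodule says nothing about them.

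The hole can be closed, but only with input your sketch never invokes: by Beck's theorem each local subalgebra $\U_q(\hat{sl}_\infty)_{[a,b]}$ is an honest quantum affine algebra in which $x_{i,0}^{\pm}$ are Drinfeld--Jimbo Chevalley generators, and the ($q$-twisted) adjoint action of a Chevalley generator on a quantum Kac--Moody algebra is locally nilpotent; since every element of $\U_q(\hat{sl}_\infty)$ lies in some $\U_q(\hat{sl}_\infty)_{[a,b]}$ with $i\in[a,b]$, the usual expansion of $(x_{i,0}^-)^n u$ in terms of $(\mathrm{ad}\,x_{i,0}^-)^k u$ then makes $W_i^-$ a submodule of the simple module $V(\gamma)$, hence everything. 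Alternatively --- and this is the route the paper itself points to via Theorem \ref{thmlimind}, taken from the same reference --- one sidesteps propagation entirely: $V(\gamma)=\bigcup_n V_n$ with $V_n=\U_q(\hat{sl}_\infty)_{[-n,n]}\cdot v$ an $\ell$-highest weight module over $\U_q(\hat{sl}_{2n+2})'$ with dominant Drinfeld polynomials $(P_i)_{|i|\leq n}$, hence with finite-dimensional simple quotient by Chari--Pressley; once each $V_n$ is known to be finite-dimensional, every $x_{i,0}^{\pm}$ acts nilpotently on it for weight reasons, and local nilpotency on all of $V(\gamma)$ is immediate. As written, your proposal is incomplete exactly at the step you yourself flagged as delicate, and the loop Serre relations you propose to use there do not suffice.
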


\noindent The polynomials $P_i$ are called Drinfeld polynomials and the representation $V(\gamma)$ is then denoted by $V((P_i)_{i \in \ZZ})$.

The Kirillov-Reshetikhin module associated to $k \geq 0$, $a \in \CC^{\ast}$ and $\ell \in \ZZ$, is the simple integrable representation defined by the $n$--tuple
$$P_i(u) = \left\lbrace \begin{array}{l} (1-ua)(1-uaq^{2}) \cdots (1-uaq^{2(k-1)}) \ \mathrm{for} \ i = \ell, \\ 1 \ \mathrm{for} \ i \neq \ell. \end{array} \right.$$
\noindent If $k=1$, it is also called fundamental module.

\begin{thm}\label{thmlimind}\cite{hernandez_algebra_2011}
Let $v$ be a $\ell$-highest weight vector of the simple $\U_q(\hat{sl}_\infty)$-module $V((P_i)_{i \in \ZZ})$. Then
$$V((P_i)_{i \in \ZZ}) = \bigcup_{n \geq 0} V_n((P_i)_{i \in \ZZ}) \text{ where } V_n((P_i)_{i \in \ZZ}) = \U_q(\hat{sl}_\infty)_{[-n, n]} \cdot v.$$
Furthermore each $V_n((P_i)_{i \in \ZZ})$ is a simple finite-dimensional $\U_q(\hat{sl}_\infty)_{[-n, n]}$-module.
\end{thm}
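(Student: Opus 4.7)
The equality $V=\bigcup_{n\geq 0}V_n$ is immediate from the triangular decomposition (Theorem \ref{dtrianinf}). Since $v$ is $\ell$-highest weight, $\U_q(\hat{\Hlie})$ acts by scalars on $v$ and $\U_q(\hat{sl}_\infty)^+\cdot v=\CC v$, so $V=\U_q(\hat{sl}_\infty)^-\cdot v$. Every element of $\U_q(\hat{sl}_\infty)^-$ is a finite linear combination of products of the $x^-_{i,r}$, hence lies in the subalgebra generated by $x^-_{i,r}$ with $i\in[-n,n]$ for $n$ large enough. Applying the triangular decomposition inside the finite-rank subalgebra $\U_q(\hat{sl}_\infty)_{[-n,n]}$ one obtains $V_n=\U_q(\hat{sl}_\infty)_{[-n,n]}^-\cdot v$, whence $V=\bigcup_{n\geq 0}V_n$.

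For each $n$ the subalgebra $\U_q(\hat{sl}_\infty)_{[-n,n]}\simeq\U_q(\hat{sl}_{2n+2})'$ is an ordinary quantum affine algebra, and $v$ is an $\ell$-highest weight vector for it with Drinfeld polynomials $(P_{-n},\dots,P_n)$, all of which are polynomials. By the Chari--Pressley classification of finite-dimensional simple representations of quantum affine algebras there exists a unique simple $\U_q(\hat{sl}_{2n+2})'$-module $W_n$ with these Drinfeld data, and it is finite-dimensional. Since $V_n$ is $\ell$-highest weight for $\U_q(\hat{sl}_{2n+2})'$ with the same data it is a quotient of the corresponding Verma module, and thus surjects onto $W_n$ by $v\mapsto w_n$. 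The theorem reduces to proving that this surjection is an isomorphism, equivalently that $V_n$ is simple as a $\U_q(\hat{sl}_{2n+2})'$-module.

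My plan for this simplicity is to realize $V$ itself as a direct limit $L:=\varinjlim W_n$. Once a compatible family of $\U_q(\hat{sl}_{2n+2})'$-linear embeddings $\iota_n\colon W_n\hookrightarrow W_{n+1}$ with $\iota_n(w_n)=w_{n+1}$ has been constructed, the limit $L$ inherits a natural $\U_q(\hat{sl}_\infty)$-module structure (each generator lies in some $\U_q(\hat{sl}_{2n+2})'$ and acts through the compatible system), is $\ell$-highest weight with Drinfeld polynomials $(P_i)_{i\in\ZZ}$, and is simple: any nonzero submodule of $L$ meets some $W_n$ in a nonzero $\U_q(\hat{sl}_{2n+2})'$-stable subspace, which must contain $w_n$ by simplicity of $W_n$, and hence contains $W_m$ for every $m\geq n$. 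By uniqueness of the simple $\ell$-highest weight module there is an isomorphism $L\simeq V$, under which the image of $W_n$ is identified with $\U_q(\hat{sl}_{2n+2})'\cdot v=V_n$. This yields $V_n\simeq W_n$.

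The main obstacle is the construction of the embeddings $\iota_n$. The natural candidate identifies $W_n$ with the cyclic $\U_q(\hat{sl}_{2n+2})'$-submodule $\U_q(\hat{sl}_{2n+2})'\cdot w_{n+1}\subseteq W_{n+1}$ generated by the $\ell$-highest weight vector of $W_{n+1}$. This submodule is automatically integrable as a submodule of $W_{n+1}$, is $\ell$-highest weight with Drinfeld polynomials $(P_{-n},\dots,P_n)$, and therefore surjects onto $W_n$; the nontrivial step is to show that it is itself \emph{simple}, which requires controlling the way the Drinfeld currents outside the range $[-n,n]$ interact with $w_{n+1}$. The most direct tools for this are a $q$-character argument or a comparison with the appropriate Weyl module of $\U_q(\hat{sl}_{2n+4})'$, certifying that the cyclic submodule has no $\ell$-highest weight vector other than $w_{n+1}$. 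Once this technical point is in hand, the direct-limit construction concludes the proof.
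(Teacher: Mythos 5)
The statement is quoted by this paper from \cite{hernandez_algebra_2011} and no proof is reproduced here, so your attempt can only be judged on its own merits. Your first paragraph is fine: the equality $V((P_i)_{i\in\ZZ})=\bigcup_n V_n((P_i)_{i\in\ZZ})$ does follow from the triangular decomposition plus the fact that any element of $\U_q(\hat{sl}_\infty)^-$ involves only finitely many indices $i$. The reduction of the second assertion to the simplicity of $V_n$ as a $\U_q(\hat{sl}_{2n+2})'$-module is also correctly set up. But the proof then stops exactly at the theorem's actual content. Your direct-limit scheme requires the embeddings $\iota_n\colon W_n\hookrightarrow W_{n+1}$, and as you yourself note, constructing $\iota_n$ amounts to showing that the cyclic submodule $\U_q(\hat{sl}_{2n+2})'\cdot w_{n+1}\subseteq W_{n+1}$ is simple. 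That is the rank-$(n{+}1)$-to-rank-$n$ instance of precisely the claim being proved (``the corner subalgebra applied to the highest weight vector of a simple module yields a simple module''), so the reduction is circular in substance: you have traded the infinite-rank statement for an identical finite-rank one without proving either. The tools you name to close it --- ``a $q$-character argument'' or ``comparison with the appropriate Weyl module'' --- are gestured at, not executed, and neither is routine: one would need to rule out $[-n,n]$-dominant monomials other than the highest one among the $\ell$-weights of the cyclic submodule, and, as the present paper itself remarks, no $q$--character formula is known for a general simple $\ell$-highest weight module, so this requires a genuine structural argument (this is the real work done in \cite{hernandez_algebra_2011}).

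A secondary, fixable defect: as written, even the finite-dimensionality of $V_n$ hangs on the unproven isomorphism $V_n\simeq W_n$. You can decouple it: $V_n$ is an integrable $\ell$-highest weight $\U_q(\hat{sl}_{2n+2})'$-module whose highest $\ell$-weight is given by the polynomials $(P_{-n},\dots,P_n)$, hence a quotient of the corresponding Weyl module of Chari--Pressley, which is finite-dimensional; this gives $\dim V_n<\infty$ unconditionally. With that in hand the union statement and finite-dimensionality are complete, but the simplicity of each $V_n$ --- the heart of the theorem --- remains an acknowledged gap in your proposal.
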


Consider an integrable representation $V$ of $\U_q(\hat{sl}_\infty)$. As the subalgebra $\U_q(\hat{\Hlie})$ is commutative, the weight spaces $V_{\nu}$ split in simultaneous generalized eigenspaces
$$V_\nu = \bigoplus_{\gamma \in \mathrm{Hom}(\U_q(\hat{\Hlie}), \CC)} V_{\gamma},$$
where $$V_{\gamma} = \{x \in V / \exists p \in \NN, \forall i \in \ZZ, \forall m \geq 0, (\phi_{i, \pm m}^{\pm} - \gamma(\phi_{i, \pm m}^{\pm}))^{p} \cdot x = 0 \}.$$ If $V_{\gamma} \neq \{0\}$, $\gamma$ is called an $\ell$-weight of $V$ and $V_\gamma$ is an $\ell$-weight space.

\begin{defi}
A $\U_q(\hat{sl}_\infty)$-module $V$ is weighted if the Cartan subalgebra $\U_q(\hat{\Hlie})$ acts on $V$ by diagonalizable operators. The module $V$ is thin if it is weighted and the joint spectrum is simple.
\end{defi}

\begin{defi}\cite{frenkel_$q$-characters_1999, hernandez_representations_2005, nakajima_quiver_2001} The $q$--character of an integrable representation $V$ of $\U_q(\hat{sl}_\infty)$ with finite-dimensional $\ell$-weight spaces is defined by the formal sum
$$\chi_q(V) = \sum_{\gamma \in \mathrm{Hom}(\U_q(\hat{\Hlie}), \CC)} \dim(V_{\gamma}) e(\gamma).$$
\end{defi}

As in \cite{feigin_representations_2013}, we set
$$\psi(z)=\frac{q-q^{-1}z}{1-z}.$$

\begin{prop}\cite{frenkel_$q$-characters_1999, hernandez_algebra_2011}
Let $V$ be an integrable representation of $\U_q(\hat{sl}_\infty)$. An $\ell$-weight $\gamma$ of $V$ satisfies the property
\begin{enumerate}
\item for all $i \in \ZZ$, there exist $k,l \in \NN$ and $a_{1,i}, \cdots, a_{k,i}, b_{1,i}, \cdots, b_{l,i} \in \CC^{\ast}$ such that in $\CC[[z]]$ (resp. in $\CC[[z^{-1}]]$):
$$\sum_{m \geq 0} \gamma(\phi_{i, \pm m}^{\pm}) z^{\pm m} = \prod_{1 \leq u \leq k} \psi(a_{u,i}qz) \prod_{1 \leq v \leq l} \psi(b_{v,i}qz)^{-1} = \prod_{1 \leq u \leq k} \psi(a_{u,i}qz) \prod_{1 \leq v \leq l} \psi(b_{v,i}^{-1}qz^{-1})$$
\end{enumerate}
Furthermore if $V$ is in the category $\mathcal{O}$, then
\begin{enumerate}
\item[(2)] there exist $\omega \in P^{+}$, $\alpha \in Q^{+}$ satisfying $\nu = \omega - \alpha$.
\end{enumerate}
\end{prop}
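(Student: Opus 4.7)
The plan is to prove (1) by reducing to $\U_q(\hat{sl}_2)$-theory via the subalgebras $\hat{\U_i} \simeq \U_q(\hat{sl}_2)'$ and then invoking the classical Frenkel--Reshetikhin / Chari--Pressley classification of $\ell$-weights for integrable $\U_q(\hat{sl}_2)$-modules. Fix an index $i \in \ZZ$, an $\ell$-weight $\gamma$ of $V$, and a generalized eigenvector $v \in V_\gamma$ of weight $\nu \in P$. Integrability of $V$ implies that the cyclic submodule $\hat{\U_i} \cdot v$ is an integrable $\U_q(\hat{sl}_2)'$-module. First I would show that the portion of this submodule lying in the $h_i$-weight space of weight $\nu(h_i)$ is finite-dimensional, combining local nilpotence of $x_{i,0}^{\pm}$ (which bounds the range of $h_i$-weights) with the preservation of $\U_q(\hat{\Hlie})$-generalized eigenspaces by the $\phi_i^\pm(z)$ series, so as to cut out a finite-dimensional piece on which the scalars $\gamma(\phi_i^\pm(z))$ can be read off.

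On such a finite-dimensional integrable $\U_q(\hat{sl}_2)'$-module, the classical theory tells us that $\gamma(\phi_i^\pm(z))$ are the expansions at $z=0$, respectively $z=\infty$, of a single rational function of the form $q^{k-l}\prod_u (1 - a_{u,i}qz)/\prod_v (1 - b_{v,i}qz)$ with $k - l = \nu(h_i)$. Using the identity $\psi(aqz) = (q - az)/(1 - aqz)$, this rational function rewrites as $\prod_u \psi(a_{u,i}qz) \cdot \prod_v \psi(b_{v,i}qz)^{-1}$, which is the first product in~(1). The second equality displayed in the statement then follows from the identity $\psi(bqz)^{-1} = \psi(b^{-1}qz^{-1})$ of rational functions, the two forms corresponding to the natural expansions of the common rational function in $\CC[[z]]$ and in $\CC[[z^{-1}]]$ respectively.

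For~(2), I would invoke the standard complete reducibility theorem for integrable modules in category $\mathcal{O}$: viewed as a $\U_q(sl_\infty)$-module via $\mathrm{Res}$, $V$ decomposes as a direct sum of the simple integrable highest weight modules $V(\omega)$ with $\omega \in P^+$, the direct-limit analogue of the Kac--Peterson complete-reducibility theorem for symmetrizable Kac--Moody algebras. Since the set of weights of $V(\omega)$ is contained in $\omega - Q^+$, every weight $\nu$ of $V$ is of the form $\omega - \alpha$ with $\omega \in P^+$ and $\alpha \in Q^+$, as required.

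The main obstacle I expect is the finite-dimensionality reduction used for part~(1). A priori, integrability constrains the $h_i$-weight structure of $\hat{\U_i}\cdot v$ but does not immediately guarantee that its $h_i$-weight slice of weight $\nu(h_i)$ is finite-dimensional; one must combine local nilpotence of $x_{i,0}^\pm$ with a careful use of the $\U_q(\hat{\Hlie})$-generalized eigenspace decomposition to extract a finite-dimensional $\U_q(\hat{sl}_2)'$-module on which the standard $\ell$-weight description applies. Once this reduction is carried out, both~(1) and~(2) follow from well-documented results, but making the reduction rigorous is the technical heart of the argument.
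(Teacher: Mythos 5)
The paper itself offers no proof of this proposition: it is recalled as background with citations to Frenkel--Reshetikhin and to Hernandez, so your proposal can only be measured against the argument in those references. Your skeleton does match that standard argument: fix $i$, restrict to the subalgebra $\hat{\U_i}\simeq \U_q(\hat{sl}_2)'$, invoke the $\U_q(\hat{sl}_2)$ classification of $\ell$-weights, and translate into the $\psi$-notation via the identities $\psi(aqz)=q\,(1-aq^{-1}z)/(1-aqz)$ and $\psi(bqz)^{-1}=\psi(b^{-1}qz^{-1})$, both of which are correct as written, with the two displayed products being the expansions of one rational function at $z=0$ and $z=\infty$. Part (2) is also fine, though complete reducibility is a heavier hammer than needed: since $V$ is in category $\mathcal{O}$, any weight $\nu$ admits a maximal weight $\nu'\geq\nu$ of $V$, and integrability forces $\nu'(h_i)\geq 0$ for all $i$ (a vector of weight $\nu'$ is killed by all $x_i^+$, so $sl_2$-theory applies), giving $\nu=\nu'-\alpha$ with $\nu'\in P^+$, $\alpha\in Q^+$ without appealing to a direct-limit Kac--Peterson theorem.

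The genuine gap is exactly the one you flag, and your proposed mechanism for closing it would not go through as stated, for two reasons. First, under integrability alone (the paper's Definition of integrable only demands a weight decomposition and local nilpotence of the zero modes $x_{i,0}^{\pm}$; no finite-dimensionality of weight spaces is assumed outside category $\mathcal{O}$), nothing forces the $h_i$-weight slice of $\hat{\U_i}\cdot v$ to be finite-dimensional; indeed it is not even immediate that long products $x^+_{i,r_1}\cdots x^+_{i,r_k}\cdot v$ of \emph{nonzero} modes eventually vanish, since local nilpotence only concerns $x^+_{i,0}$ --- termination of the string requires an argument using the relation $(z-q^{2}w)x_i^+(z)x_i^+(w)=(q^{2}z-w)x_i^+(w)x_i^+(z)$ together with the weight structure. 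Second, even granting a finite-dimensional slice, the Frenkel--Reshetikhin/Chari--Pressley classification is a statement about finite-dimensional \emph{modules}, not about a single $h_i$-weight slice, so the eigenvalues of the full series $\phi_i^{\pm}(z)$ cannot simply be ``read off'' a slice by quoting that theorem. The reduction in the cited sources runs differently: one produces, inside the sum of $\ell$-weight spaces above $V_\gamma$, a nonzero vector $w$ with $x^+_{i,r}\cdot w=0$ for all $r\in\ZZ$; then $\hat{\U_i}\cdot w$ is an integrable $\ell$-highest weight $\U_q(\hat{sl}_2)'$-module, whose eigenvalue is of Drinfeld-polynomial form $q^{\deg P}P(zq^{-1})/P(zq)=\prod_u\psi(a_uqz)$, and one concludes by observing that $\gamma$ differs from the $\ell$-weight of $w$ by a monomial in the $A_{i,c}^{\pm 1}$, each factor $A_{i,c}^{-1}$ contributing $\psi$-factors in the denominator --- this is what produces the mixed numerator/denominator shape in (1). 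So your outline is the right one and your self-diagnosis is accurate, but the ``finite-dimensional slice plus quote the classification'' step is precisely where the proof, as proposed, fails and must be replaced by the highest-$\ell$-weight-vector argument.
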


Consider formal variables $ Y_{i,a}^{\pm 1} $ ($ i \in \ZZ $, $ a \in \CC^{\ast}$) and let $ A $ be the group of monomials of the form $$ m = \prod_{i \in \ZZ, a \in \CC^{\ast}} Y_{i,a}^{u_{i,a}(m)}, \ u_{i,a}(m) \in \ZZ.$$
Set $$ A_{i,a} = Y_{i,aq^{l-1}} Y_{i,aq^{l+1}} Y_{i-1,aq^{l}}^{-1}Y_{i+1,aq^{l}}^{-1} \in A.$$
A monomial $ m $ is said to be $J$-dominant ($ J \subset \ZZ $) if for all $ j \in J $ and $a\in \CC^{\ast} $ we have $ u_{j,a}(m) \geq 0 $. A $ \ZZ $-dominant monomial is said to be dominant.

Let $V$ be an integrable $\U_q(\hat{sl}_\infty)$-module. For $\gamma$ an $\ell$-weight of $V$, one defines the monomial $m_{\gamma} = \prod_{i \in \ZZ} \prod_{a_i \in \CC^{\ast}} Y_{i,a_i} \prod_{b_i \in \CC^{\ast}} Y_{i,b_i}^{-1}$ where
$$\sum_{m \geq 0} \gamma(\phi_{i, \pm m}^{\pm}) z^{\pm m} = \prod_{a_i} \psi(a_{i}qz) \prod_{b_i} \psi(b_{i}qz)^{-1}.$$
We denote $V_{\gamma} = V_{m_{\gamma}}$. We rewrite the $q$--character of an integrable representation $V$ with finite-dimensional $\ell$-weight spaces by the formal sum
$$\chi_q(V) = \sum_{m} \dim(V_{m}) m \in \ZZ^{A}.$$
Let us denote by $\mathcal{M}(V)$ the set of monomials occurring in $\chi_q(V)$.

By this correspondence between $\ell$-weights and monomials due to Frenkel-Reshetikhin \cite{frenkel_$q$-characters_1999}, the $\ZZ$-tuple of Drinfeld polynomials are identified with the dominant monomials. In particular for a dominant monomial $m$, one denotes by $V(m)$ the simple module of $\ell$-highest weight $m$. For example $V(Y_{\ell, a}Y_{\ell, aq^{2}} \dots Y_{\ell, aq^{2(k-1)}})$ is the Kirillov-Reshetikhin module associated to $\ell \in \ZZ$, $k \geq 0$ and $a \in \CC^{\ast}$, and $V(Y_{\ell, a})$ is the fundamental module associated to $\ell \in \ZZ$ and $a \in \CC^{\ast}$.

In general no formula is known for the $q$--character of a $\ell$-highest weight module of $\U_q(\hat{sl}_\infty)$. But in the special case of Kirillov-Reshetikhin modules, the $q$--character can be explicitly given as a tableaux sum expression. For that we set for all $a \in \CC^{\ast}$ and $j \in \ZZ$
$$\ffbox{j}_a = Y_{j-1, aq^{j}}^{-1}Y_{j, aq^{j-1}}.$$
For $I, J \subset \ZZ$, a tableaux $(T_{i, j})_{i \in I, j \in J}$ is said to be semi-standard if it has coefficients in $\ZZ$ which increase relatively to $j$ and strictly increase relatively to $i$.

Consider $\ell \in \ZZ$ and $k \geq 0$. Let $\mathcal{T}_{\ell, k}$ be the set of semi-standard tableaux $(T_{i, j})_{i \leq \ell, 1 \leq j \leq k}$ such that for any $1 \leq j \leq k$,
$$T_{i, j} = i \text{ for } i << 0.$$

\begin{thm}\cite{hernandez_algebra_2011}
We have the following formula
\begin{align}\label{formqchainfty}
\chi_q(V(Y_{\ell, a} \cdots Y_{\ell, aq^{2(k-1)}})) = \sum_{T \in \mathcal{T}_{\ell, k}} m_T
\end{align}
where $m_T = \prod_{i \leq \ell, 1 \leq j \leq k} \ffbox{T_{i,j}}_{aq^{\ell-1+2(j-i)}}$.
\end{thm}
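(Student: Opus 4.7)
The strategy is to use Theorem \ref{thmlimind} to reduce to the finite-rank case, apply the classical tableau formula for $q$-characters of Kirillov--Reshetikhin (KR) modules of quantum affine algebras of type $A$ (due to Nakajima and Frenkel--Mukhin), and then pass to the inductive limit.

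First, let $v$ be an $\ell$-highest weight vector of $V := V(Y_{\ell,a}\cdots Y_{\ell,aq^{2(k-1)}})$. By Theorem \ref{thmlimind} one has $V = \bigcup_{n\geq |\ell|} V_n$ with $V_n = \U_q(\hat{sl}_\infty)_{[-n,n]}\cdot v$ a simple finite-dimensional module over $\U_q(\hat{sl}_\infty)_{[-n,n]}\simeq \U_q(\hat{sl}_{2n+2})'$. Tracing the Drinfeld polynomials through this isomorphism, one identifies $V_n$ with the type $A_{2n+1}$ KR module associated to the data $(\ell, a, k)$, after the shift of node labels $\{1,\ldots,2n+1\}\leftrightarrow \{-n,\ldots,n\}$.

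Second, the known tableau formula expresses $\chi_q(V_n)$ as a sum over semi-standard Young tableaux of rectangular shape $\ell\times k$ with entries in the corresponding range. After translating node and entry labels, and matching the spectral parameter $aq^{\ell-1+2(j-i)}$ with the content of each cell, this formula coincides with the restriction of the right-hand side of \eqref{formqchainfty} to those $T\in\mathcal{T}_{\ell,k}$ whose entries lie in $\{-n,\ldots,n+1\}$, equivalently those $T$ satisfying $T_{i,j}=i$ for all $i\leq -n$.

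Third, pass to the inductive limit as $n\to\infty$. Since the embeddings $V_n\hookrightarrow V_{n+1}$ respect the joint spectral decomposition of $\U_q(\hat{\Hlie})$, one has $\mathcal{M}(V)=\bigcup_n\mathcal{M}(V_n)$ and $\ell$-weight multiplicities stabilize. The inductive limit of the finite tableau sets is exactly $\mathcal{T}_{\ell,k}$: the condition $T_{i,j}=i$ for $i\ll 0$ characterizes tableaux belonging to some finite level. A short telescoping computation confirms that the "identity tail" of $T$ contributes trivially to the monomial $m_T$, and \eqref{formqchainfty} follows. The main obstacle lies in the careful bookkeeping of the label shifts and of the spectral parameters between the finite-rank type $A$ formula and the $\ZZ$-indexed $sl_\infty$ setting, and in checking that this telescoping is clean and produces no spurious boundary factors as $n$ grows.
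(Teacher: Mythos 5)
The paper itself offers no proof of this theorem: it is quoted directly from \cite{hernandez_algebra_2011}, where it is established exactly along the lines you describe --- reduction to the finite-rank case via the inductive-limit structure of Theorem \ref{thmlimind}, identification of each $V_n$ with a Kirillov--Reshetikhin module of type $A$ whose $q$--character is given by the classical rectangular tableau-sum formula, and passage to the limit $n\to\infty$. Your outline is correct, including the key verification that the frozen tail $T_{i,j}=i$ for $i\ll 0$ telescopes (the $Y_{i}$-factor of $\ffbox{i}_{aq^{\ell-1+2(j-i)}}$ cancels the $Y_{i}^{-1}$-factor of $\ffbox{i+1}_{aq^{\ell-1+2(j-i-1)}}$, both carrying parameter $aq^{\ell-2+2j-i}$), so $m_T$ is a genuine finite Laurent monomial and the proposal matches the cited argument.
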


\subsection{Extremal loop weight modules}

The definition of extremal loop weight modules is given in \cite{hernandez_algebra_2011} for the quantum toroidal algebras and studied in \cite{mansuy_quantum_2012, mansuy_extremal_2013}. We introduce its definition in the context of the quantum affinization $ \mathcal{U}_{q}(\hat{sl}_\infty) $.

\begin{defi}\label{defelminf} An extremal loop weight module of $ \mathcal{U}_{q}(\hat{sl}_\infty) $ of $\ell$-weight $m$ is an integrable representation $ V $ such that there is $ v \in V_m $ satisfying
\begin{enumerate}
\item[(i)] $ \mathcal{U}_{q}(\hat{sl}_\infty) \cdot v = V $,
\item[(ii)] $ v $ is extremal for $ \mathcal{U}_{q}^{h}(\hat{sl}_\infty) $,
\item[(iii)] $ \mathcal{U}_{q}(\hat{sl}_\infty)_J \cdot w $ is finite-dimensional for all $ w \in V $ and $ J = [a, b] \subset \ZZ $ finite.
\end{enumerate} \end{defi}

\begin{prop}
Let $m$ be a dominant monomial. Then the simple $\ell$-highest weight module $V(m)$ is an extremal loop weight module of $\U_{q}(\hat{sl}_\infty)$.
\end{prop}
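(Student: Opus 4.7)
The strategy is to let $v$ denote the $\ell$-highest weight vector of $V(m)$ and to verify the three conditions of Definition \ref{defelminf} in turn, taking this same $v$ as the distinguished vector in each case. Condition (i) is immediate from the fact that $V(m)$ is an $\ell$-highest weight module: $V(m) = \U_q(\hat{sl}_\infty)^{-} \cdot v \subseteq \U_q(\hat{sl}_\infty) \cdot v \subseteq V(m)$.

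For condition (ii), I would first observe that since $m$ is dominant, the classical weight $\lambda$ of $v$ (obtained by reading the exponents $u_{i,a}(m)$) lies in $P^{+}$, and $v$ satisfies $x_i^{+} \cdot v = x_{i,0}^{+} \cdot v = 0$ for every $i \in \ZZ$. Consequently, under the horizontal subalgebra $\U_q^{h}(\hat{sl}_\infty) \simeq \U_q(sl_\infty)$, the submodule $\U_q^{h}(\hat{sl}_\infty) \cdot v$ is a highest weight module of dominant highest weight $\lambda$. Since it lies in the integrable module $\mathrm{Res}(V(m))$, it is itself integrable, and hence equal to the simple integrable highest weight module $V(\lambda)$ (the defining relations of Verma forced to hold by the locally nilpotent action of the $x_i^{-}$). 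By Kashiwara's results recalled above, the highest weight vector of $V(\lambda)$ is extremal, and every iterated Weyl group translate $S_{i_1}\cdots S_{i_l}(v)$ remains inside this submodule; hence $v$ is extremal for $\U_q^{h}(\hat{sl}_\infty)$.

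For condition (iii), let $w \in V(m)$ and let $J = [a,b] \subset \ZZ$ be finite. By Theorem \ref{thmlimind}, $w$ belongs to $V_n(m) = \U_q(\hat{sl}_\infty)_{[-n,n]} \cdot v$ for some $n \geq 0$, and $V_n(m)$ is a simple finite-dimensional module over $\U_q(\hat{sl}_\infty)_{[-n,n]}$. Enlarging $n$ if necessary so that $[-n,n] \supseteq J$, one has $\U_q(\hat{sl}_\infty)_{J} \subseteq \U_q(\hat{sl}_\infty)_{[-n,n]}$, so
\[
\U_q(\hat{sl}_\infty)_{J} \cdot w \;\subseteq\; \U_q(\hat{sl}_\infty)_{[-n,n]} \cdot w \;\subseteq\; V_n(m),
\]
which is finite-dimensional.

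The only non-formal point is the verification of (ii): one must be sure that the horizontal submodule generated by $v$ is the simple integrable $V(\lambda)$, and not a larger integrable highest weight quotient of the Verma module, so that Kashiwara's extremality of the highest weight vector applies. This uses the integrability of $V(m)$ in an essential way. Conditions (i) and (iii) are then structural consequences of the $\ell$-highest weight definition and of the inductive-limit description of $V(m)$ given in Theorem \ref{thmlimind}.
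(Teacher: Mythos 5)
Your proposal is correct and follows essentially the same route as the paper: condition (ii) by identifying $\U_q^{h}(\hat{sl}_\infty) \cdot v$ with the simple integrable highest weight module $V(\lambda)$ (the paper phrases the integrability input as the explicit relation $(x_i^-)^{1+\lambda(h_i)} \cdot v = 0$, you phrase it as local nilpotency forcing simplicity, which is the same fact), and condition (iii) by embedding $\U_q(\hat{sl}_\infty)_J \cdot w$ into the finite-dimensional $V_n(m)$ of Theorem \ref{thmlimind}. Your write-up is in fact slightly more careful than the paper's in (iii), where you make explicit the choice of $n$ large enough that both $w \in V_n(m)$ and $J \subseteq [-n,n]$ hold.
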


\begin{proof}
Let $v$ be a $\ell$-highest weight vector of $V(m)$. Denote its weight by $\lambda \in P^+$. As it is recalled above, $V(m) = \U_{q}(\hat{sl}_\infty) \cdot v$  and is integrable. Furthermore $v$ is a highest weight vector for $\U_{q}^h(\hat{sl}_\infty)$ which satisfies for all $i \in \ZZ$,
$$(x_i^{-})^{1+\lambda(h_i)} \cdot v = 0.$$
Hence $\U_{q}^h(\hat{sl}_\infty) \cdot v$ is isomorphic to the simple highest weight $\U_q(sl_{\infty})$-module $V(\lambda)$, and $v$ is extremal of weight $\lambda$ for the horizontal quantum affine subalgebra.

It remains to show that for all $w \in V(m)$ and $J = [a, b] \subset \ZZ$ finite, $\U_q(\hat{sl}_\infty)_J \cdot w$ is finite-dimensional. But there exists $n \in \NN$ such that $J \subset [-n , n]$ and
\begin{align*}
\U_q(\hat{sl}_\infty)_J \cdot w \subset \U_q(\hat{sl}_\infty)_{[-n, n]} \cdot w \subset V_n(m) = \U_q(\hat{sl}_\infty)_{[-n, n]} \cdot v.
\end{align*}
As $V_n((P_i)_{i \in \ZZ})$ is finite-dimensional by Theorem \ref{thmlimind}, it is also the case of $\U_q(\hat{sl}_\infty)_J \cdot w$. So the $\U_q(\hat{sl}_\infty)$-module $V(m)$ is an extremal loop weight module of $\ell$-weight $m$.
\end{proof}

In this paper we construct extremal loop weight modules of $\U_q(\hat{sl}_\infty)$ by fusion product of $\ell$-highest weight modules and $\ell$-lowest weight modules. This process is introduced in \cite{mansuy_extremal_2013} in the toroidal case, and is motivated by the original study of extremal weight modules of quantum Kac-Moody algebras \cite{kashiwara_crystal_1994}: in fact to study these representations, tensor products of highest weight modules and lowest weight modules are considered (see \cite[Lemma 8.2.1]{kashiwara_crystal_1994}).

\subsection{Drinfeld coproduct}

Let $\Delta$ be the coproduct defined by ($i\in \ZZ$)
\begin{equation}\label{deltaxplus}
\Delta(x_i^+(z))=x_i^+(z)\otimes 1 +\phi_i^-(z)\otimes x_i^+(z),
\end{equation}
\begin{equation}\label{deltaxmoins}
\Delta(x_i^-(z))= x_i^-(z)\otimes \phi_i^+(z)+1\otimes x_i^-(z),
\end{equation}
\begin{equation}\label{deltacart}
\Delta(\phi_i^\pm(z))=\phi_i^\pm(z)\otimes \phi_i^\pm (z).
\end{equation}

\noindent Note that this map does not define a coproduct in the usual sense because (\ref{deltaxplus}) and (\ref{deltaxmoins}) involve infinite sums and are not elements of $\U_q(\hat{sl}_\infty) \otimes \U_q(\hat{sl}_\infty)$ (they take values in a completion of it). However it can be used to define a structure of $\U_q(\hat{sl}_\infty)$-module on (a subspace of) tensor products of $\U_q(\hat{sl}_\infty)$-modules when the summations are well-defined.

\begin{lem}\cite{feigin_representations_2013} 
\label{lemsubminf}
Let $V$, $W$ be $\U_q(\hat{sl}_\infty)$-modules. Let $U\subseteq V\otimes W$ be a linear subspace such that 
for any $u\in U$, $\Delta(x_{i}^{+}(z)) \cdot u$ is well-defined in $U$ ( $i\in \ZZ$). Then the coproduct $\Delta$ endows $U$ with a structure of $\U_q(\hat{sl}_\infty)$-module, called the fusion product of $V$ and $W$.
\end{lem}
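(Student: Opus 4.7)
The plan is to verify in two stages that $\Delta$ endows $U$ with a $\U_q(\hat{sl}_\infty)$-module structure: first, a purely formal verification that the maps $\Delta(x_i^\pm(z))$, $\Delta(\phi_i^\pm(z))$ defined by (\ref{deltaxplus})--(\ref{deltacart}) satisfy all the Drinfeld relations of Definition \ref{defqaainf}; second, an argument that the hypothesis on $U$ upgrades this formal morphism into an honest action on $U$.

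For the formal part, I would check each relation of Definition \ref{defqaainf} individually by substituting (\ref{deltaxplus})--(\ref{deltacart}) and expanding in $\mathrm{End}(V \otimes W)[[z^{\pm 1}, w^{\pm 1}]]$. The Cartan-Cartan relations follow immediately from $\Delta(\phi_i^\pm(z)) = \phi_i^\pm(z) \otimes \phi_i^\pm(z)$ and the corresponding relations in each factor. The mixed $\phi$-$x$ relations (\ref{relxcartpl}) and (\ref{relxcartmo}) are matched by applying the same identities in each tensor slot. For the commutator $[\Delta(x_i^+(z)), \Delta(x_j^-(w))]$, expansion yields four terms: two cross-terms cancel via (\ref{relxcartpl}) and (\ref{relxcartmo}), while the remaining diagonal terms produce the required delta-function right-hand side through the standard identity $\delta(w/z)\phi_i^\pm(w) = \delta(w/z)\phi_i^\pm(z)$. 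The quadratic $x^\pm x^\pm$ relations and the Serre relations follow analogously, with the Serre identity requiring a symmetrization in $(z_1, z_2)$ after expansion.

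For the operator-theoretic part, observe that $\Delta(\phi_i^\pm(z))$ always gives finite sums in each $z$-coefficient and so acts on all of $V \otimes W$. The Drinfeld commutation relation $[x_i^+(z),x_j^-(w)] = \frac{\delta_{i,j}}{q-q^{-1}}(\delta(w/z)\phi_i^+(w) - \delta(z/w)\phi_i^-(z))$ then expresses each coefficient of $\Delta(\phi_i^\pm)$ as a commutator of coefficients of $\Delta(x_i^+)$ and $\Delta(x_i^-)$, so that combined with the hypothesis on $\Delta(x_i^+(z))$ a bootstrap yields $U$-invariance under $\Delta(\phi_i^\pm(z))$ and ultimately under $\Delta(x_i^-(z))$. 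The main obstacle I anticipate is precisely this last step: the two coproduct formulas (\ref{deltaxplus}) and (\ref{deltaxmoins}) involve infinite sums running in opposite directions (one in nonpositive powers of $z$, one in nonnegative powers), so deducing well-definedness of $\Delta(x_i^-(z))$ on $U$ from the hypothesis on $\Delta(x_i^+(z))$ alone is delicate; it will require exploiting structural properties of $V$ and $W$ such as integrability and local nilpotency of the $x_{i,r}^\pm$ to ensure that on each fixed weight component of $U$ only finitely many summands contribute.
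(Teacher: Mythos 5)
First, a point of reference: the paper contains no proof of Lemma \ref{lemsubminf} at all --- it is quoted from \cite{feigin_representations_2013} --- so your attempt can only be measured against the argument intended there and against the way the lemma is actually deployed in the paper. Your first stage (checking that the formulas (\ref{deltaxplus})--(\ref{deltacart}) formally satisfy the relations of Definition \ref{defqaainf}, i.e.\ that $\Delta$ is an algebra morphism into a completion) is standard and unobjectionable, and your observation that every coefficient of $\Delta(\phi_i^\pm(z))$ is a \emph{finite} sum is correct.

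The genuine gap is in your second stage, and you half-see it yourself. The ``bootstrap'' via the commutation relation $[x_i^+(z),x_j^-(w)]=\frac{\delta_{i,j}}{q-q^{-1}}(\delta(w/z)\phi_i^+(w)-\delta(z/w)\phi_i^-(z))$ is circular: $x_i^-$ is an independent generator, the relation only becomes an identity of operators on $U$ \emph{after} both sides are known to act, and there is no way to solve for the $x^-$-action from the $x^+$- and $\phi$-actions. Worse, the two problematic infinite summations are logically independent: the coefficients of $\Delta(x_i^+(z))$ pair $\phi_i^-$ on the first factor against $x_i^+$ on the second, while those of $\Delta(x_i^-(z))$ pair $x_i^-$ on the first factor against $\phi_i^+$ on the second, and the hypothesis constrains only the former. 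Concretely, take $W$ an $\ell$-highest weight module with highest weight vector $w$, so $x_i^+(z)\cdot w=0$ and $\phi_i^+(z)\cdot w=\gamma_i(z)w$ with $\gamma_i$ a nontrivial series, and set $U=V\otimes\CC w$: then $\Delta(x_i^+(z))\cdot(v\otimes w)=x_i^+(z)\cdot v\otimes w$ is well-defined in $U$ for every $v$, yet the coefficient of $z^r$ in $x_i^-(z)\cdot v\otimes\phi_i^+(z)\cdot w$ is the genuinely infinite sum $\sum_{m\geq 0}\gamma_{i,m}\,(x_{i,r-m}^-\cdot v)\otimes w$, and $1\otimes x_i^-(z)$ leaves $U$. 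So no purely formal derivation of the $x^-$-case from the $x^+$-hypothesis can exist; the convergence and $U$-stability for the $x^-$-currents must be secured separately, using the specific structure of the modules ($\ell$-weight bases, currents acting by delta functions supported exactly at the poles of the rational $\ell$-weight series, which makes the $x^+$- and $x^-$-conditions coincide). This is visibly how the paper itself operates: in Proposition \ref{proptpdefinf} and in the construction of $V(Y_{\ell,a}Y_{0,aq^\ell}^{-1})$, the series $x_i^-(z)\cdot(T_a\otimes T'_b)$ and the stability of the subspace under $x_i^-(z)$ are checked by the same explicit pole computation as for $x_i^+(z)$ (``the proof being similar for $x_i^-(z)$''), rather than deduced from Lemma \ref{lemsubminf} abstractly. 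Your fallback --- invoking integrability and local nilpotency --- is also unavailable at the stated level of generality, since the lemma assumes nothing of $V$ and $W$ beyond being modules; what actually does the work in Propositions \ref{tpexistudinf} and \ref{proptpdefinf} is the thin $\ell$-weight structure, not integrability per se.
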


\begin{lem}\label{tpalgv}
Assume further that $V$ and $W$ satisfies $\U_{q}(\hat{sl}_\infty)_J \cdot v$ and $\U_{q}(\hat{sl}_\infty)_J \cdot w$ are finite-dimensional for all $v \in V, w \in W$ and all $J = [a, b] \subset \ZZ$ finite in the last lemma. Then the $\U_{q}(\hat{sl}_\infty)$-module $U$ satisfies also
$$\U_{q}(\hat{sl}_\infty)_J \cdot u \text{ is finite-dimensional for all } u \in U \text{ and } J = [a, b] \subset \ZZ \text{ finite.}$$
In particular if $V$ and $W$ are integrable, $U$ is an integrable $\U_q(\hat{sl}_\infty)$-module.
\end{lem}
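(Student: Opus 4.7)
The strategy is to reduce the finiteness claim on $U$ to the corresponding hypothesized finiteness on each tensor factor, exploiting the crucial fact that the Drinfeld coproduct respects the ``local'' subalgebras $\U_q(\hat{sl}_\infty)_J$. Fix $u \in U$ and $J = [a,b] \subset \ZZ$ finite. As $V \otimes W$ is an ordinary tensor product, write $u = \sum_{k=1}^{n} v_k \otimes w_k$ as a finite sum with $v_k \in V$, $w_k \in W$, and set
\[
V' \; = \; \sum_{k=1}^{n} \U_q(\hat{sl}_\infty)_J \cdot v_k \; \subseteq \; V, \qquad W' \; = \; \sum_{k=1}^{n} \U_q(\hat{sl}_\infty)_J \cdot w_k \; \subseteq \; W.
\]
By assumption on $V$ and $W$, the subspaces $V'$ and $W'$ are finite-dimensional, and by construction they are $\U_q(\hat{sl}_\infty)_J$-stable.

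The key observation is that the Drinfeld coproduct formulas (\ref{deltaxplus})--(\ref{deltacart}) applied to a generator of index $j$ only involve currents $x_j^{\pm}(z)$ and $\phi_j^{\pm}(z)$ with the \emph{same} index $j$. Consequently, for $j \in J$, each Fourier mode of $\Delta(x_{j,r}^{\pm})$ and $\Delta(\phi_{j,s}^{\pm})$ is a (possibly infinite) sum of simple tensors in $\U_q(\hat{sl}_\infty)_J \otimes \U_q(\hat{sl}_\infty)_J$. Hence whenever such a sum is well-defined on an element of $V' \otimes W'$, its value lies again in $V' \otimes W'$; and by Lemma~\ref{lemsubminf} the sums are well-defined on $u$. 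Proceeding by induction on the length of a word in the generators of $\U_q(\hat{sl}_\infty)_J$ applied to $u$, we conclude
\[
\U_q(\hat{sl}_\infty)_J \cdot u \; \subseteq \; (V' \otimes W') \cap U,
\]
which is finite-dimensional since $V' \otimes W'$ is.

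For the integrability statement, assume now that $V$ and $W$ are integrable. The $\U_q(sl_\infty)$-weight decomposition of $V \otimes W$ induced by $\Delta(k_h) = k_h \otimes k_h$ (whose eigenvalues add on tensors of weight vectors) restricts to a weight decomposition of the submodule $U$. To prove that each $x_{i,0}^{\pm}$ is locally nilpotent on $U$, apply the first part of the lemma to the singleton $J = \{i\}$: the subspace $\hat{\U}_i \cdot u \subseteq U$ is finite-dimensional and stable under $\hat{\U}_i \simeq \U_q(\hat{sl}_2)'$. Since $q$ is not a root of unity, any finite-dimensional weight module of $\U_q(\hat{sl}_2)'$ is a finite-dimensional weight $\U_q(sl_2)$-module under the horizontal subalgebra, on which the Chevalley generators $x_{i,0}^{\pm}$ act nilpotently. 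In particular $x_{i,0}^{\pm}$ act nilpotently on $u$.

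The only potentially delicate point is that the Drinfeld coproduct does not take values in a genuine tensor product of algebras, so one must be careful that the stability of $V' \otimes W'$ really follows from the index-by-index structure of (\ref{deltaxplus})--(\ref{deltacart}); but this is settled precisely by the hypothesis of Lemma~\ref{lemsubminf}, which guarantees that the relevant infinite sums converge on $U$ and may therefore be evaluated termwise against elements of $V' \otimes W'$.
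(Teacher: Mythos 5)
Your proof is correct and takes essentially the same route as the paper's: the paper's entire argument is the inclusion $\U_{q}(\hat{sl}_\infty)_J \cdot (v \otimes w) \subseteq \left( \U_{q}(\hat{sl}_\infty)_J \cdot v \right) \otimes \left( \U_{q}(\hat{sl}_\infty)_J \cdot w \right)$, justified by the fact that the Drinfeld coproduct formulas for index $j$ involve only currents of the same index $j$ — precisely your key observation. You additionally make explicit the integrability statement (weight decomposition of $U$ plus local nilpotency of $x_{i,0}^{\pm}$ via $J = \{i\}$ and weight-shifting on a finite-dimensional space), which the paper subsumes under ``The result follows.''
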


\begin{proof}
Set $J = [a, b] \subset \ZZ$ finite and let $u = v \otimes W \in V \otimes W$. By the formulas of the coproduct, we have
$$\U_{q}(\hat{sl}_\infty)_J \cdot u \subset \left( \U_{q}(\hat{sl}_\infty)_J \cdot v \right) \otimes \left( \U_{q}(\hat{sl}_\infty)_J \cdot v \right).$$
The result follows.
\end{proof}

\section{Extremal fundamental loop weight modules}

In this section, we define a $\U_q(\hat{sl}_\infty)$-module structure on the tensor product of fundamental modules
$$V(Y_{\ell, a}) \otimes V(Y_{0, aq^{\ell}}^{-1}) \text{ with } \ell \geq 1 \text{ and } a \in \CC^{\ast}$$
by using the Drinfeld coproduct $\Delta$. We obtain in that way new integrable modules $V(Y_{\ell, a}Y_{0, aq^{\ell}}^{-1})$ with basis labelled by semi-standard tableaux of shape $(\ell)$. We show in particular that these modules are $\ell$-extremal. We call them the extremal fundamental loop weight modules of $\U_q(\hat{sl}_\infty)$.

The fusion product process used here is inspired to the one in \cite{mansuy_extremal_2013} for the toroidal case. As it is said in the introduction, the representation theory of $\U_q(\hat{sl}_\infty)$ and of quantum toroidal algebras are very different: the fundamental $\U_q(\hat{sl}_\infty)$-modules are thin, which is not the case for the quantum toroidal algebras $\U_q(sl_{n+1}^{tor})$ (see \cite[Section 4.1]{hernandez_algebra_2011}). Let us point out that these differences have consequences in our work: we obtain here more precise results for the fusion product of $\U_q(\hat{sl}_\infty)$-modules. Another difficulty met in \cite{mansuy_extremal_2013} is that the cyclicity of the Dynkin diagram associated to $\U_q(sl_{n+1}^{tor})$ bring some restricted conditions in the construction of extremal loop weight modules (see \cite[Proposition 3.11]{mansuy_extremal_2013}). We will see that it is possible here to associate extremal fundamental loop weight $\U_q(\hat{sl}_\infty)$-modules for all the nodes of the Dynkin diagram of type $A_\infty$.

In the first part, we give explicit formulae of the action on the fundamental \linebreak $\U_q(\hat{sl}_\infty)$-modules, using the $\U_q(sl_\infty)$-crystal structure on the set of semi-standard \linebreak tableaux. In the second part, we construct the fusion product of the $\U_q(\hat{sl}_\infty)$-modules $V(Y_{\ell, a})$ and $V(Y_{0, aq^{\ell}}^{-1})$, the action of $\U_q(\hat{sl}_\infty)$ being defined via the Drinfeld coproduct (Proposition \ref{tpexistudinf}). We consider a submodule $V(Y_{\ell, a}Y_{0, aq^{\ell}}^{-1})$ of it, which we call the extremal fundamental loop weight module. We study the $\U_q(\hat{sl}_\infty)$-modules $V(Y_{\ell, a}Y_{0, aq^{\ell}}^{-1})$ in the third part: these representations are integrable with basis labelled by semi-standard tableaux of shape $(\ell)$. The action of $\U_q(\hat{sl}_\infty)$ on it is described by the $\U_q(sl_\infty)$-crystal structure on the set of semi-standard tableaux of shape $(\ell)$ (Theorem \ref{thmformactinf}). Furthermore $V(Y_{\ell, a}Y_{0, aq^{\ell}}^{-1})$ is an extremal loop weight module of $\ell$-weight $Y_{\ell, a}Y_{0, aq^\ell}^{-1}$ (Theorem \ref{thmelwminf}) which is irreducible as a $\U_q(\hat{sl}_\infty)$-module and as a $\U_q^h(\hat{sl}_\infty)$-module (Proposition \ref{felwmirredinf}).

\subsection{Action on the fundamental $\U_q(\hat{sl}_\infty)$-modules}

In this section we give explicit formulae of the action on the fundamental $\U_q(\hat{sl}_\infty)$-modules $V(Y_{\ell, a})$ ($\ell \in \ZZ, a \in \CC^{\ast}$).

Let $\mathcal{T}_{\ell}^+ = \mathcal{T}_{\ell, 1}$ be the set of semi-standard tableaux
$$T = ( \cdots < i_{\ell - 2} < i_{\ell-1} < i_\ell ) \text{ such that } i_j = j \text{ for } j <<0.$$
We endow $\mathcal{T}_{\ell}^+$ with a structure of $\U_q(sl_\infty)$-crystal: the weight function is defined by setting
$$\wt(T) = \sum_{j \leq \ell} \Lambda_{i_j} - \Lambda_{i_{j}-1}$$
 for all $T = ( \cdots < i_{\ell - 2} < i_{\ell-1} < i_\ell ) \in \mathcal{T}_\ell^+$. The Kashiwara operators $\tilde{e}_i$, $\tilde{f}_i$ are described in terms of tableaux: for $i \in \ZZ$ we have $\tilde{e}_i \cdot T = T'$ or $0$. Here $T'$ is obtained from $T$ by replacing $i+1$ by $i$. If it is not possible (i.e. when we have both $i+1$ and $i$ in $T$ or when $i+1$ does not occur in $T$), then it is zero. Similarly $\tilde{f}_i \cdot m_{T;j}= m_{T'';j}$ or $0$, where $T''$ is given by replacing $i$ by $i+1$.

\begin{prop}
Let $\ell \in \ZZ$ and $a \in \CC^{\ast}$ and consider the fundamental $\U_q(\hat{sl}_\infty)$-module $V(Y_{\ell,a})$. Then there exists a basis of $V(Y_{\ell,a})$ indexed by $\mathcal{T}_\ell^+$ such that the action of $\U_q(\hat{sl}_\infty$) is given by (for convenience in the calculations, we remind the complex number $a \in \CC^{\ast}$ in subscript)
\begin{eqnarray}\label{actonfundinf}
\begin{array}{rcl}
x_i^+(z) \cdot T_a &=& \displaystyle \sum_{p} \delta_{\{i_p=i+1\}} \delta(aq^{i+\ell + 1-2p}z) (\tilde{e}_i \cdot T)_a,\\
x_{i}^-(z) \cdot T_a &=& \displaystyle \sum_{p} \delta_{\{i_p=i\}} \delta(aq^{i + \ell + 1-2p}z) (\tilde{f}_i \cdot T)_a,\\
\phi_{i}^\pm(z) \cdot T_a &=& \begin{cases} \psi(aq^{i + \ell + 3 -2p}z)^{-1} \cdot T_a & \begin{array}{c} \text{ if there exists } p \leq \ell \text{ such that } \\ i_p=i+1 \text{ and } i_{p-1} \neq i,
\end{array}\\
\psi(aq^{i + \ell + 1-2p}z) \cdot T_a & \begin{array}{c} \text{ if there exists } p \leq \ell \text{ such that } \\  \text{ if } i_p=i \text{ and } i_{p+1} \neq i+1,
\end{array}\\
T_a & \text{ otherwise.} \end{cases}
\end{array}
\end{eqnarray}
where $T = ( \cdots < i_{\ell - 2} < i_{\ell-1} < i_\ell )$ is a semi-standard tableaux in $\mathcal{T}_\ell^+$.
\end{prop}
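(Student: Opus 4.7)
The plan is to realise $V(Y_{\ell,a})$ as the direct limit of finite-dimensional fundamental modules for the quantum affine subalgebras $\U_q(\hat{sl}_\infty)_{[-n,n]} \simeq \U_q(\hat{sl}_{2n+2})'$, and to transport to the limit the well-known basis and action formulas for fundamental modules of quantum affine algebras of type $A$. By Theorem \ref{thmlimind},
$$V(Y_{\ell,a}) = \bigcup_{n \geq |\ell|} V_n(Y_{\ell,a}),$$
and each $V_n(Y_{\ell,a})$ is the simple finite-dimensional $\U_q(\hat{sl}_{2n+2})'$-module with Drinfeld polynomial $1-ua$ at the node $\ell$ and $1$ elsewhere. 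This is the classical fundamental module, obtained by evaluation from the $\ell$-th fundamental representation of $\U_q(sl_{2n+2})$, whose basis is in bijection with column-strict Young tableaux of shape $(1^\ell)$ with entries in $\{-n, -n+1, \ldots, n+1\}$ — equivalently, with the truncations of the elements of $\mathcal{T}_\ell^+$.

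Next I would invoke the classical explicit action of the Drinfeld currents on this basis (see e.g.\ Chari--Pressley, Frenkel--Mukhin). Each $\ell$-weight space is one-dimensional, spanned by a vector $T_a$ associated to a tableau $T$, with $\ell$-weight given precisely by the monomial $m_T$ appearing in the $q$-character formula (\ref{formqchainfty}) for $k=1$. The operator $\phi_i^\pm(z)$ is therefore diagonal on this basis: unpacking $m_T$ via the Frenkel--Reshetikhin correspondence yields a telescoping product of $\psi$-factors, in which adjacent contributions cancel whenever both $i$ and $i+1$ appear (or neither appears) in the column $T$, leaving exactly the three cases listed in (\ref{actonfundinf}). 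The operators $x_i^\pm(z)$ then carry $T_a$ to $(\tilde e_i\cdot T)_a$ or $(\tilde f_i \cdot T)_a$, a single tableau or zero, with a $\delta$-function coefficient $\delta(aq^{i+\ell+1-2p}z)$ read off from the spectral parameter attached to the box being modified.

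To pass to the limit, I would verify that the inclusion $V_n(Y_{\ell,a}) \hookrightarrow V_{n+1}(Y_{\ell,a})$ corresponds to the natural inclusion of tableaux that are stationary on the left: a column of length $\ell$ supported in $[-n,n+1]$ is identified with the tableau in $\mathcal{T}_\ell^+$ obtained by extending it by $i_j=j$ for $j<-n$. Since the Drinfeld generators with Dynkin index in $[-n,n]$ act identically in $V_n$ and in $V_{n+1}$, and since their action on a tableau $T$ involves only indices in a neighbourhood of the entries of $T$ that differ from $i_j=j$, the formulas are compatible with the direct system. Passing to the limit yields a basis of $V(Y_{\ell,a})$ indexed by $\mathcal{T}_\ell^+$ together with the action (\ref{actonfundinf}).

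The main technical point is the second step: tracking all spectral parameters to obtain the precise $\delta$- and $\psi$-function expressions, and in particular verifying by a careful bookkeeping that $T_a$ carries the $\ell$-weight $m_T$ in the sense of Frenkel--Reshetikhin. Once this is settled at finite level (where it reduces to a standard computation in the fundamental representation of $\U_q(\hat{sl}_{2n+2})'$), the inductive limit is immediate and yields the stated formulas verbatim.
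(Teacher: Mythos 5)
Your proposal is correct and takes essentially the same approach as the paper: the paper's proof consists exactly of citing the known tableau-basis action for fundamental modules of quantum affine algebras of type $A$ (\cite[Theorem 4.11]{mansuy_quantum_2012}) and passing to the inductive limit via Theorem \ref{thmlimind}. Your extra bookkeeping (the $\ell$-weight $m_T$ via Frenkel--Reshetikhin, the compatibility of the inclusions $V_n(Y_{\ell,a}) \subset V_{n+1}(Y_{\ell,a})$) just spells out details the paper leaves implicit.
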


\noindent In particular for all $T \in \mathcal{T}_\ell^+$, $T_a$ is an $\ell$-weight vector of weight $m_T$.

\begin{proof}
This result is known for the fundamental $\U_q(\hat{sl}_{n+1})$-modules (see \cite[Theorem 4.11]{mansuy_quantum_2012}). We obtain the result for $\U_q(\hat{sl}_\infty)$ by inductive limit, using Theorem \ref{thmlimind}.
\end{proof}

We have an analogue result for the $\ell$-lowest weight modules. For that let $\mathcal{T}_s^-$ ($s \in \ZZ$) be the set of semi-standard tableaux
$$T = (i_{s+1} < i_{s + 2} < i_{s + 3} < \cdots ) \text{ such that } i_{j} = j \text{ for } j >> 0.$$ Then $(\mathcal{T}_s^-, \wt, \tilde{e}_i, \tilde{f}_i)$ is a $\U_q(sl_\infty)$-crystal. And for all $a \in \CC^{\ast}$, there exists a basis of $V(Y_{s, a}^{-1})$ indexed by $\mathcal{T}_s^-$ such that the action of $\U_q(\hat{sl}_\infty)$ on it is given for all \linebreak $T =  (i_{s+1} < i_{s + 2} < i_{s + 3} < \cdots ) \in \mathcal{T}_s^-$ by
\begin{eqnarray*}
\begin{array}{rcl}
x_i^+(z) \cdot T_{a} &=& \displaystyle \sum_{p} \delta_{\{i_p=i+1\}} \delta(aq^{i+s + 1-2p}z) (\tilde{e}_i \cdot T)_{a},\\
x_{i}^-(z) \cdot T_{a} &=& \displaystyle \sum_{p} \delta_{\{i_p=i\}} \delta(aq^{i + s + 1-2p}z) (\tilde{f}_i \cdot T)_{a},\\
\phi_{i}^\pm(z) \cdot T_{a} &=& \begin{cases} \psi(aq^{i + s + 3 -2p}z)^{-1} \cdot T_{a} & \begin{array}{c} \text{ if there exists } p \geq s+1 \text{ such that } \\ i_p=i+1 \text{ and } i_{p-1} \neq i,
\end{array}\\
\psi(aq^{i + s + 1-2p}z) \cdot T_{a} & \begin{array}{c} \text{ if there exists }  p \geq s+1 \text{ such that } \\  \text{ if } i_p=i \text{ and } i_{p+1} \neq i+1,
\end{array}\\
T_{a} & \text{ otherwise.} \end{cases}
\end{array}
\end{eqnarray*}

\subsection{Fusion product of fundamental modules}

\begin{prop}\label{tpexistudinf}
Let $\ell \geq 1$ and $a \in \CC^{\ast}$. The coproduct $\Delta$ is well-defined on the tensor product $V(Y_{\ell, a}) \otimes V(Y_{0, aq^{\ell}}^{-1})$ and it endows it with a structure of integrable \linebreak $\U_q(\hat{sl}_\infty)$-module.
\end{prop}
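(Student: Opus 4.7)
The plan is to apply Lemma \ref{lemsubminf} with $U = V(Y_{\ell,a}) \otimes V(Y_{0,aq^\ell}^{-1})$ to obtain the $\U_q(\hat{sl}_\infty)$-module structure on the tensor product, and then Lemma \ref{tpalgv} to upgrade it to integrability. The entire content is the verification that, for every $i \in \ZZ$ and every basis vector $T_a \otimes S_{aq^\ell}$ with $T \in \mathcal{T}_\ell^+$ and $S \in \mathcal{T}_0^-$, the expression
\[
\Delta(x_i^+(z))(T_a \otimes S_{aq^\ell}) \;=\; x_i^+(z)T_a \otimes S_{aq^\ell} \,+\, \phi_i^-(z)T_a \otimes x_i^+(z)S_{aq^\ell}
\]
is a well-defined element of $U$.

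I would analyse the two summands using the explicit formulas for the action on the fundamental modules recalled in Section 3.1. The first summand is always a single delta-function term, hence harmless. In the second, $\phi_i^-(z)T_a$ is the Laurent expansion in $z^{-1}$ of a rational function $\psi(\beta z)^{\pm 1}$, while $x_i^+(z)S_{aq^\ell}$ is a single term of the form $\delta(\alpha z)(\tilde{e}_i S)_{aq^\ell}$. The product is well-defined in $U$ if and only if $\psi(\beta z)^{\pm 1}$ has no pole at $z = 1/\alpha$.

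The main step, and the main obstacle, is this regularity check. Reading off the spectral parameters from the formulas, the candidate pole sits at $z = (aq^{i+\ell+1-2p'})^{-1}$, where $p'$ indexes the relevant entry of $T$, and the candidate delta parameter at $z = (aq^{i+\ell+1-2p})^{-1}$, where $p$ indexes the relevant entry of $S$; so a genuine resonance forces $p = p'$. The key inputs are the monotonicity bounds
\[
i_p \geq p \text{ for every } p \text{ when } T \in \mathcal{T}_\ell^+, \qquad j_p \leq p \text{ for every } p \text{ when } S \in \mathcal{T}_0^-,
\]
both immediate from strict increase combined with the asymptotic identity condition defining these tableau sets. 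A short case analysis on the two non-trivial branches of the $\phi_i^-$ formula then closes the argument. In the branch $i_{p'} = i$, $i_{p'+1} \neq i+1$, the bounds give $p' \leq i$ while the $S$-side condition $j_p = i+1$ gives $p \geq i+1$, so $p \neq p'$. In the branch $i_{p'} = i+1$, $i_{p'-1} \neq i$, the same two bounds force $p = p' = i+1$; but then strict increase together with $i_p \geq p$ force $i_i = i$, contradicting the absence of $i$ from $T$ which follows from $i_{p'-1} \neq i$ by a short monotonicity argument. In both branches the resonance is blocked and the mixed summand is a finite sum of basis vectors.

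Once well-definedness of $\Delta(x_i^+(z))$ on $U$ is established for all $i$, Lemma \ref{lemsubminf} endows $U$ with a $\U_q(\hat{sl}_\infty)$-module structure. For integrability I would apply Lemma \ref{tpalgv}: its hypothesis that $\U_q(\hat{sl}_\infty)_J \cdot v$ is finite-dimensional for every $v$ and every finite $J = [a,b] \subset \ZZ$ holds on both factors by Theorem \ref{thmlimind}, and both factors are integrable by Theorem \ref{cond}, so $U$ is integrable, as required.
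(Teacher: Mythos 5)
Your proposal is correct and follows essentially the same route as the paper: compute $\Delta(x_i^+(z))$ on basis vectors via the explicit thin-module formulas, locate the only possible resonance between the pole of $\phi_i^-(z)$ on the first factor and the delta support of $x_i^+(z)$ on the second (the paper's condition $s=r$ is your $p=p'$), and rule it out using the tableau bounds $i_p \geq p$, $j_p \leq p$ coming from strict increase plus the asymptotic normalization, before invoking Lemma \ref{lemsubminf}. The only cosmetic differences are that the paper derives $r \leq i$ directly in the branch $i_{r}=i+1$, $i_{r-1}\neq i$ where you argue by contradiction at $p=p'=i+1$, and that you make explicit the integrability step via Lemma \ref{tpalgv} and Theorem \ref{thmlimind}, which the paper's proof leaves implicit.
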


\begin{rem}
We have introduced a fusion product process in \cite[Section 3]{mansuy_extremal_2013} to define new integrable modules for the quantum toroidal algebras. This process can also be used in our situation to show Proposition \ref{tpexistudinf}. We give here a direct proof of it for the convenience of the reader. As it is pointed out above, the computations are simpler in that case, stemming from the thin property of the fundamental $\U_q(\hat{sl}_\infty)$-modules.
\end{rem}

\begin{proof}
Let us consider
\begin{align*}
T = ( \cdots < i_{\ell-2} < i_{\ell-1} < i_{\ell}) \in \mathcal{T}_\ell^+ \text{ and } T' = (j_1 < j_2 < j_3 < \cdots ) \in \mathcal{T}_0^-.
\end{align*}
We study the action of $x_i^{+}(z)$ on $T_a \otimes T'_{aq^{\ell}} \in V(Y_{\ell, a}) \otimes V(Y_{0, aq^{\ell}}^{-1})$:
\begin{align*}
\begin{array}{c}
x_i^+(z) \cdot (T_a \otimes T'_{aq^{\ell}}) = \sum_{r} \delta_{\{i_r=i+1\}} \delta(a q^{i + \ell + 1 - 2r}z) (\tilde{e}_i \cdot T)_a \otimes T'_{aq^{\ell}} + \\
\displaystyle \sum_{r, s} \left[ 1 + \delta_{\{i_r=i+1\}}\delta_{\{i_{r-1} \neq i\}}\left( \psi (q^{2(s-r)+2} )^{-1} - 1 \right) + \delta_{\{i_r=i\}}\delta_{\{i_{r+1} \neq i+1\}} \left( \psi ( q^{2(s-r)} ) - 1 \right) \right] \\
\times \delta_{\{j_s=i+1\}} \delta(a q^{i + \ell + 1 - 2r}z) T_a \otimes (\tilde{e}_i \cdot T')_{aq^{\ell}}.
\end{array}
\end{align*}
By definition of $\mathcal{T}_\ell^+$ and $\mathcal{T}_0^-$, we have $r \leq i$, $s \geq i+1$, and $s-r \geq 1$. Hence the coefficients of the series $x_i^+(z) \cdot (T_a \otimes T'_{aq^{\ell}})$ are well-defined. The result follows by Lemma \ref{lemsubminf}.
\end{proof}

Let us fix $\ell \in \ZZ$. For $T = ( \cdots < i_{\ell-2} < i_{\ell-1} < i_\ell )$ a semi-standard tableaux in $\mathcal{T}_\ell^+$, we define
$$\alpha_T = \sup \{ p \leq \ell \vert i_p = p \}.$$
Note that $\alpha_T$ is well-defined by definition of $\mathcal{T}_\ell^+$. In the same way, we define for all $T = ( i_{1} < i_{2} < i_{3} < \cdots ) \in \mathcal{T}_0^-$
$$\beta_T = \inf \{ p \geq 1 \vert i_p = p \}.$$

Assume that $\ell \geq 1$. We have endowed the sets $\mathcal{T}_{\ell}^+$ and $\mathcal{T}_0^-$ with a structure of $\U_q(sl_\infty)$-crystal. By the tensor product rules (see \cite{kashiwara_crystal_1994, kashiwara_bases_2002}), $\mathcal{T}_\ell^+ \otimes \mathcal{T}_0^-$ is a \linebreak $\U_q(sl_\infty)$-crystal. Let us denote by $\mathcal{T}_\ell$ the subset of $\mathcal{T}_\ell^+ \otimes \mathcal{T}_0^-$ consisting of
$$T \otimes T' \text{ with } T \in \mathcal{T}_\ell^{+}, T' \in \mathcal{T}_0^- \text{ and } \alpha_T \geq \beta_{T'}-1.$$

\begin{prop}
The set $\mathcal{T}_\ell$ is the connected sub-$\U_q(sl_\infty)$-crystal of $\mathcal{T}_\ell^+ \otimes \mathcal{T}_0^-$ generated by $(\cdots < \ell-2 < \ell-1 < \ell) \otimes (1 < 2 < 3 < \cdots)$.
\end{prop}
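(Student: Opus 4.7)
The plan is to prove three claims in sequence: (1) $T_0 \otimes T'_0 \in \mathcal{T}_\ell$; (2) $\mathcal{T}_\ell$ is stable under each Kashiwara operator $\tilde{e}_i, \tilde{f}_i$ ($i \in \ZZ$); and (3) $\mathcal{T}_\ell$ is connected, i.e.\ every element is reachable from $T_0 \otimes T'_0$ by a finite chain of Kashiwara operators.

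Claim (1) is immediate from the definitions: since $T_0$ has $i_p = p$ for all $p \leq \ell$ we get $\alpha_{T_0} = \ell$, and since $T'_0$ has $j_p = p$ for all $p \geq 1$ we get $\beta_{T'_0} = 1$; hence $\alpha_{T_0} = \ell \geq 0 = \beta_{T'_0} - 1$ because $\ell \geq 1$.

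For claim (2), I establish a tracking lemma describing how $\alpha_T$ evolves under the operators on $\mathcal{T}_\ell^+$. Setting $p^\ast := \alpha_T$ and using the strict monotonicity of entries, one checks: if $\tilde{f}_i T \neq 0$, acting at some position $q$, then $\alpha$ strictly drops precisely when $i = p^\ast$ (forcing $q = p^\ast$), and is unchanged otherwise; if $\tilde{e}_i T \neq 0$, acting at some position $q$, then $\alpha$ strictly increases precisely when $q > p^\ast$ and $i = q$ (a new fixed point is created), strictly drops when $q = p^\ast$ (so $i = p^\ast - 1$), and is unchanged otherwise. A symmetric statement governs $\beta_{T'}$ on $\mathcal{T}_0^-$. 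Combining these with the tensor product rule for Kashiwara operators, a case-by-case check shows that $\alpha_T \geq \beta_{T'} - 1$ is preserved whenever the operator does not annihilate $T \otimes T'$: the configurations in which the inequality could be violated correspond exactly to those in which the tensor product rule routes the operator to the other factor, or in which the result is zero.

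For claim (3), introduce the non-negative integer
$$d(T \otimes T') := \#\{p \leq \ell : i_p \neq p\} + \#\{p \geq 1 : j_p \neq p\},$$
finite by the stabilization conditions defining $\mathcal{T}_\ell^+$ and $\mathcal{T}_0^-$, and vanishing exactly at $T_0 \otimes T'_0$. Arguing by induction on $d$, it suffices to exhibit for every $T \otimes T' \in \mathcal{T}_\ell$ with $d > 0$ a finite chain of Kashiwara operators, staying inside $\mathcal{T}_\ell$, that leads to an element with strictly smaller $d$. When $T \neq T_0$, so $p^\ast = \alpha_T < \ell$, writing $i_{p^\ast + 1} = p^\ast + k$ with $k \geq 2$, the prototypical chain is $\tilde{e}_{p^\ast + k - 1}, \ldots, \tilde{e}_{p^\ast + 1}$, which progressively lowers this entry until the last operator creates a new fixed point at position $p^\ast + 1$ and raises $\alpha$ to $p^\ast + 1$, strictly reducing $d$; the tensor product rule together with the tracking lemma ensures that each step can be routed to the factor that keeps us in $\mathcal{T}_\ell$. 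A symmetric scheme handles the case $T' \neq T'_0$, and iterating reaches $T_0 \otimes T'_0$.

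The main obstacle is claim (2), specifically the case analysis under the tensor product rule: the variety of ways $\alpha_T$ and $\beta_{T'}$ can change under the Kashiwara operators requires careful bookkeeping, and the crucial subtlety is showing that whenever an operator would push the pair across the boundary $\alpha_T = \beta_{T'} - 1$, the tensor product rule either routes it to the factor that preserves the inequality or annihilates the tensor.
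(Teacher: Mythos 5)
Your proposal is correct and takes essentially the same route as the paper: the paper likewise proves stability of $\mathcal{T}_\ell$ by tracking how $\alpha_T$ and $\beta_{T'}$ evolve under the Kashiwara operators and disposes of the only dangerous boundary case $\alpha_T = \beta_{T'}-1 = i$ via the tensor product rule, which forces $\tilde{e}_i \cdot (T \otimes T') = (\tilde{e}_i \cdot T) \otimes T' = 0$, while it leaves connectedness to ``straightforward computations'', which your induction on the defect $d$ fills in. Two blemishes are harmless: the case in your tracking lemma where $\tilde{e}_i$ strictly drops $\alpha_T$ is vacuous (all positions $p \leq \alpha_T$ are fixed points, so the needed letter is already present and the operator vanishes), and routing under the tensor product rule is forced rather than chosen --- though the inequality $\alpha_T \geq \beta_{T'}-1$ does guarantee that your prototypical chains are routed to the intended factor, since the relevant letters are fixed points of the other tableau.
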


\begin{proof}
Set $T \otimes T' \in \mathcal{T}_\ell$ and $i \in \ZZ$. Let us show that $\tilde{e}_i \cdot (T \otimes T') \in \mathcal{T}_\ell \cup \{0\}$ (the proof for the Kashiwara operators $\tilde{f}_i$ is analogous). We have the following equalities
\begin{align*}
\alpha_{\tilde{e}_i \cdot T} & = \begin{cases} \alpha_{T} & \text{ if } \tilde{e}_i \cdot T' \neq 0 \text{ and } i > \alpha_{T}+1, \\
\alpha_{T} + 1 & \text{ if } i = \alpha_{T}+1,
 \end{cases}\\
\beta_{\tilde{e}_i \cdot T'} & = \begin{cases} \beta_{T'} & \text{ if } \tilde{e}_i \cdot T' \neq 0 \text{ and } i < \beta_{T'}-1, \\
\beta_{T'} + 1 & \text{ if } i = \beta_{T'}-1.
 \end{cases}
\end{align*}
In particular $\alpha_{\tilde{e}_i \cdot T} \leq \alpha_T$. So $\tilde{e}_i \cdot (T \otimes T')$ belongs to $\mathcal{T}_\ell \cup \{0\}$, except perhaps when $\alpha_T = \beta_{T'}-1 = i$. But in that case, we have by the tensor product rules
\begin{align*}
\tilde{e}_i \cdot (T \otimes T') = (\tilde{e}_i \cdot T) \otimes T' = 0 \in \mathcal{T}_\ell \cup \{0\}.
\end{align*}
Hence $\mathcal{T}_\ell$ is a sub-$\U_q(sl_\infty)$-crystal of $\mathcal{T}_\ell^+ \otimes \mathcal{T}_0^-$. And one can show by straightforward computations that it is the connected sub-$\U_q(sl_\infty)$-crystal of $\mathcal{T}_\ell^+ \otimes \mathcal{T}_0^-$ generated by $$(\cdots < \ell-2 < \ell-1 < \ell) \otimes (1 < 2 < 3 < \cdots).$$
\end{proof}

Denote by $V(Y_{\ell, a}Y_{0, aq^\ell}^{-1})$ ($\ell \geq 1, a \in \CC^{\ast}$) the subvector space of $V(Y_{\ell, a}) \otimes V(Y_{0, aq^\ell}^{-1})$ generated by $T_a \otimes T'_{aq^{\ell}}$ with $T \otimes T' \in \mathcal{T}_\ell$.

\begin{prop}
The coproduct $\Delta$ endows $V(Y_{\ell, a}Y_{0, aq^\ell}^{-1})$ with a structure of \linebreak $\U_q(\hat{sl}_\infty)$-module. We call it the extremal fundamental loop weight module.
\end{prop}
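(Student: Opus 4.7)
The plan is to apply Lemma \ref{lemsubminf}: it suffices to check that for every basis vector $u = T_a \otimes T'_{aq^\ell}$ with $T \otimes T' \in \mathcal{T}_\ell$, both $\Delta(x_i^+(z)) \cdot u$ and $\Delta(x_i^-(z)) \cdot u$ are well-defined and lie in the subspace $V(Y_{\ell,a} Y_{0,aq^\ell}^{-1})$. (The action of $\Delta(\phi_i^\pm(z)) = \phi_i^\pm(z) \otimes \phi_i^\pm(z)$ is diagonal on each $T_a \otimes T'_{aq^\ell}$ by the formulas of the previous proposition, and therefore trivially preserves the subspace.) Well-definedness of the Drinfeld action on the full tensor product is already Proposition \ref{tpexistudinf}, so the only substantive point is stability under $\Delta(x_i^\pm(z))$.

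Using $\Delta(x_i^+(z)) = x_i^+(z) \otimes 1 + \phi_i^-(z) \otimes x_i^+(z)$ and the explicit formulae (\ref{actonfundinf}) for both tensor factors, the result decomposes as a sum of terms of the form $(\tilde{e}_i \cdot T)_a \otimes T'_{aq^\ell}$ and $T_a \otimes (\tilde{e}_i \cdot T')_{aq^\ell}$. Terms of the first kind always remain in $\mathcal{T}_\ell$: from the relations $\alpha_{\tilde{e}_i \cdot T} \geq \alpha_T$ already recorded in the previous lemma, we get $\alpha_{\tilde{e}_i \cdot T} \geq \alpha_T \geq \beta_{T'}-1$. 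For terms of the second kind, similarly $\beta_{\tilde{e}_i \cdot T'} \geq \beta_{T'}$, and the inequality $\alpha_T \geq \beta_{\tilde{e}_i \cdot T'} - 1$ fails exactly in the borderline configuration $\alpha_T = \beta_{T'} - 1 = i$, where $\beta_{\tilde{e}_i \cdot T'} = \beta_{T'} + 1$.

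The main step is then to show that the coefficient in front of $T_a \otimes (\tilde{e}_i \cdot T')_{aq^\ell}$ vanishes precisely in this borderline case. In that configuration the only contributing index on the right factor is $s = i+1$, producing the delta function $\delta(aq^{\ell-i-1}z)$, while on the left factor $\phi_i^-(z)$ falls in the second case of (\ref{actonfundinf}) with $p = i$ and acts by the scalar $\psi(aq^{\ell-i+1}z)$. Their product pins $z$ to the value where $aq^{\ell-i+1}z = q^2$, and the computation $\psi(q^2) = (q - q^{-1}q^2)/(1 - q^2) = 0$ forces the whole coefficient to vanish. Hence $\Delta(x_i^+(z)) \cdot u$ is a linear combination of vectors indexed by $\mathcal{T}_\ell$.

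The argument for $\Delta(x_i^-(z)) = x_i^-(z) \otimes \phi_i^+(z) + 1 \otimes x_i^-(z)$ is parallel. This time terms $T_a \otimes (\tilde{f}_i \cdot T')_{aq^\ell}$ are automatically safe (when $\alpha_T = \beta_{T'}-1 = i$ we in fact have $i+1 \in T'$, so $\tilde{f}_i \cdot T' = 0$), while terms $(\tilde{f}_i \cdot T)_a \otimes T'_{aq^\ell}$ can leave $\mathcal{T}_\ell$ only when again $\alpha_T = \beta_{T'} - 1 = i$: then $\alpha_{\tilde{f}_i \cdot T} = \alpha_T - 1$. In this case the two contributing factors are $\delta(aq^{\ell-i+1}z)$ coming from $x_i^-(z) \cdot T_a$ at $p = i$ and the scalar $\psi(aq^{\ell-i+1}z)^{-1}$ coming from $\phi_i^+(z) \cdot T'_{aq^\ell}$ via the first case of (\ref{actonfundinf}) at $p = i+1$. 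Since $\psi(z)^{-1} = (1-z)/(q - q^{-1}z)$ vanishes at $z = 1$, the product $\delta(cz)\psi(cz)^{-1}$ is zero, and the problematic term disappears. Combining these verifications with Lemma \ref{lemsubminf} yields the $\U_q(\hat{sl}_\infty)$-module structure; the main (only) obstacle was the careful cancellation in the two borderline configurations, which rests on the twin identities $\psi(q^2) = 0$ and $\psi(1)^{-1} = 0$.
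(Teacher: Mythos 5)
Your proof is correct and follows essentially the same route as the paper's: reduce to stability of the subspace under $\Delta(x_i^{\pm}(z))$ via Lemma \ref{lemsubminf} and Proposition \ref{tpexistudinf}, isolate the borderline configuration $\alpha_T = \beta_{T'}-1 = i$ from the crystal analysis of the preceding proposition, and kill the offending coefficient through the evaluations $\psi(q^2)=0$ and $\psi(1)^{-1}=0$ pinned by the delta functions. The only difference is cosmetic: the paper computes the $x_i^+(z)$ case explicitly and declares $x_i^-(z)$ similar, whereas you carry out both verifications (and you correctly use $\alpha_{\tilde{e}_i \cdot T} \geq \alpha_T$, which is what the paper's case analysis actually yields despite the misprinted inequality there).
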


\begin{proof}
The action of $x_i^\pm(z)$ ($i \in \ZZ$) is well-defined on $V(Y_{\ell, a}Y_{0, aq^\ell}^{-1})$ by Proposition \ref{tpexistudinf}. By Lemma \ref{lemsubminf}, it remains to show that $x_i^+(z) \cdot (T_a \otimes T'_{aq^{\ell}}) \in V(Y_{\ell, a}Y_{0, aq^\ell}^{-1})$ for all $T \otimes T' \in \mathcal{T}_\ell$ (the proof being similar for $x_i^-(z)$). As in the last proof, it holds except perhaps when $\alpha_T = \beta_{T'}-1 = i$. In that case we have
\begin{align*}
x_i^+(z) \cdot T_a \otimes T'_{aq^{\ell}} = \psi(q^2) \delta(aq^{\ell-i-1}z) \cdot T_a \otimes (\tilde{e}_i \cdot T')_{aq^{\ell}} = 0 \in V(Y_{\ell, a}Y_{0, aq^\ell}^{-1})
\end{align*}
Hence the coproduct  $\Delta$ endows $V(Y_{\ell, a}Y_{0, aq^\ell}^{-1})$ with a structure of $\U_q(\hat{sl}_\infty)$-module.
\end{proof}

\subsection{Study of the extremal fundamental loop weight module $V(Y_{\ell, a}Y_{0, aq^\ell}^{-1})$}

Let $\mathcal{T}_{[1, \ell]}$ be the set of semi-standard tableaux $T = (i_1 < i_2 < \cdots < i_\ell)$ of shape $(\ell)$. We consider the application $\Pi : \mathcal{T}_\ell \rightarrow \mathcal{T}_{[1, \ell]}$ defined for all $T \otimes T' \in \mathcal{T}_\ell$ by
$$\Pi(T \otimes T') = (j_1 < j_2 < \cdots < j_{\alpha_T} < i_{\alpha_T + 1} < \cdots < i_\ell)$$
where $T = (\cdots < i_{\ell-2} < i_{\ell-1} < i_{\ell})$ and $T' = (j_1 < j_2 < j_3 < \cdots )$.
This application is bijective, its inverse sending $T = (i_1 < i_2 < \cdots < i_\ell ) \in \mathcal{T}_{[1, \ell]}$ on
$$( \cdots < 0 < \max(i_1, 1) < \cdots < \max(i_\ell, \ell)) \otimes (\min(i_1, 1) < \cdots < \min(i_\ell, \ell) < \ell+1 < \cdots).$$

\begin{prop}
The application $\Pi : \mathcal{T}_\ell \rightarrow \mathcal{T}_{[1, \ell]}$ is an isomorphism of $\U_q(sl_\infty)$-crystals.
\end{prop}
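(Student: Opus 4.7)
The plan is to verify that $\Pi$ preserves the weight function and intertwines the Kashiwara operators $\tilde{e}_i, \tilde{f}_i$ for each $i \in \ZZ$; bijectivity is already asserted above. For the weight, I would use the telescoping identities forced by the stability conditions $i_p = p$ for $p \leq \alpha_T$ and $j_p = p$ for $p \geq \beta_{T'}$. Writing $\alpha = \alpha_T$, $\beta = \beta_{T'}$, one computes
\[
\wt(T) = \Lambda_\alpha + \sum_{\alpha < p \leq \ell}(\Lambda_{i_p} - \Lambda_{i_p - 1}), \qquad \wt(T') = -\Lambda_{\beta - 1} + \sum_{1 \leq p < \beta}(\Lambda_{j_p} - \Lambda_{j_p - 1}).
\]
On the other hand, $\wt(\Pi(T \otimes T'))$ splits into contributions from positions $p \leq \alpha$ (giving $j_p$'s) and $p > \alpha$ (giving $i_p$'s); the middle identity $\sum_{\beta \leq p \leq \alpha}(\Lambda_p - \Lambda_{p-1}) = \Lambda_\alpha - \Lambda_{\beta - 1}$ then makes the two expressions match (in the boundary case $\alpha = \beta - 1$ the middle sum is empty).

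For the crystal operators, I would apply the tensor product rule to $T \otimes T'$ and case-split according to the position of $i$ relative to $\alpha$ and $\beta - 1$. The key book-keeping facts are that both $\varphi_i$ and $\varepsilon_i$ take values in $\{0,1\}$ on $\mathcal{T}_\ell^+$, with $\varphi_i(T) = 1$ exactly when $i = \alpha$ and $i+1 \notin T$, $\varepsilon_i(T) = 1$ exactly when $i + 1 \in T$ and $i \notin T$ (forcing $i \geq \alpha + 1$), and $\varphi_i = \varepsilon_i = 0$ for $i \leq \alpha - 1$ (since then $i, i+1 \in T$). Symmetric statements hold for $T'$ in terms of $\beta$. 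The tensor product rule therefore routes $\tilde{e}_i$ to the first factor only in the boundary case $i = \alpha$, and otherwise to the second factor; dually $\tilde{f}_i$ switches factors at $i = \beta - 1$. In each case one identifies the unique position $p^*$ where $T$ or $T'$ is modified, and checks that it coincides with the position of $i+1$ (respectively $i$) in $S = \Pi(T \otimes T')$, giving the desired identification via the elementary rule for column-tableau crystals.

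The main obstacle is the case $i \geq \alpha + 1$ with $\tilde{e}_i T \neq 0$: one must show that the unique $p^*$ with $i_{p^*} = i + 1$ satisfies $p^* < i$, so that $\alpha_{\tilde{e}_i T} = \alpha_T$ and hence $(\tilde{e}_i T) \otimes T' \in \mathcal{T}_\ell$. This is established by a descending chain $i_{p^* - k} \leq i - k$, built iteratively from strict increase combined with $i \notin T$, which terminates at $i_{\alpha + 1} \geq \alpha + 2$ and forces $p^* \leq i - 1$. A parallel argument controls the mirror case of $\tilde{f}_i$ at $i + 1 = \beta$, and a small boundary check handles $i = \beta - 1$ with $\beta = \alpha + 1$: there the tensor rule sends the action to the first factor where it vanishes, and correspondingly $k_\alpha = j_\alpha < \alpha + 1 < i_{\alpha+1} = k_{\alpha+1}$ shows that $\alpha+1 \notin S$, so $\tilde{e}_i S = 0$ as well. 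Once these structural facts are in place, the remaining cases reduce to routine verification using the explicit formulas for $\Pi$ and $\Pi^{-1}$.
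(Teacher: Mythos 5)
Your overall strategy (bijectivity plus a tensor-product-rule case analysis, with a weight check) is the same as the paper's, and your weight computation via the telescoping of the stabilized tails is correct. But two of your structural claims about the operators are false, and one of them is load-bearing. It is not true that $\varphi_i(T)=1$ exactly when $i=\alpha_T$: on $\mathcal{T}_\ell^+$ one has $\varphi_i(T)=1$ whenever $i\in T$ and $i+1\notin T$, which happens for many $i>\alpha_T$ (take $\ell=2$, $T=(\cdots<0<1<4)$, so $\alpha_T=1$ but $\varphi_4(T)=1$). Consequently your routing statement --- $\tilde{e}_i$ goes to the first factor only at $i=\alpha_T$, otherwise to the second --- is wrong: for every $i>\alpha_T$ one has $\beta_{T'}\leq\alpha_T+1\leq i$, hence $i,i+1\in T'$ and $\varepsilon_i(T')=\varphi_i(T')=0$, so both $\tilde{e}_i$ and $\tilde{f}_i$ act (when nonzero) on the \emph{first} factor $T$; the second factor carries the action only for small $i$ ($i<\alpha_T$, resp.\ $i\leq\beta_{T'}-2$). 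This is exactly the trichotomy $i>\alpha_T$, $i<\alpha_T$, $i=\alpha_T$ (with subcases on $\beta_{T'}$) that the paper's proof runs, and your own ``main obstacle'' paragraph, in which $\tilde{e}_i$ modifies $T$ for all $i\geq\alpha_T+1$, contradicts your routing sentence --- so I read the latter as a slip; the fatal problem is elsewhere.

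The genuine gap is your resolution of the ``main obstacle'': the claim that for $i\geq\alpha_T+1$ with $\tilde{e}_i\cdot T\neq 0$ the position $p^*$ with $i_{p^*}=i+1$ satisfies $p^*<i$, so that $\alpha_{\tilde{e}_i\cdot T}=\alpha_T$, is false precisely at $i=\alpha_T+1$. There $\tilde{e}_i\cdot T\neq 0$ forces $i_{\alpha_T+1}=\alpha_T+2=i+1$, so $p^*=\alpha_T+1=i$, a new fixed point is created, and $\alpha_{\tilde{e}_i\cdot T}=\alpha_T+1$ --- as the paper itself records in the proposition immediately preceding this one. Concretely: $\ell=2$, $T=(\cdots<0<1<3)$, $i=2$ gives $\tilde{e}_2\cdot T=(\cdots<0<1<2)$ with $\alpha=2$. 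Your descending chain $i_{p^*-k}\leq i-k$ does force $p^*\leq i-1$ when $p^*\geq\alpha_T+2$, but it is empty when $p^*=\alpha_T+1$, which is exactly the escaping case. The intertwining $\Pi\circ\tilde{e}_i=\tilde{e}_i\circ\Pi$ does still hold at $i=\alpha_T+1$, but for a reason your plan never supplies: since $\beta_{T'}\leq\alpha_T+1$ one has $j_{\alpha_T+1}=\alpha_T+1$, so the extra entry that $\Pi\bigl((\tilde{e}_i\cdot T)\otimes T'\bigr)$ now draws from $T'$ coincides with the entry $i=\alpha_T+1$ that $\tilde{e}_i$ writes into $\Pi(T\otimes T')$ (membership in $\mathcal{T}_\ell$ was never in danger, as $\alpha$ only increases). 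Without this extra case your verification is incomplete, and as written it rests on a false lemma; the paper's proof avoids the issue by checking the commutation directly in each of the three regimes rather than trying to freeze $\alpha_T$.
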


\begin{proof}
Let  $T = (\cdots < i_{\ell-2} < i_{\ell-1} < i_{\ell})$ and $T' = (j_1 < j_2 < j_3 < \cdots )$ be semi-standard tableaux such that $T \otimes T' \in \mathcal{T}_\ell$, and $i \in \ZZ$. We have to consider the following cases
\begin{itemize}
\item[-] $ i > \alpha_T$: then we have $\beta_{T'} \leq i$ and $i, i+1 \in T'$. By the tensor product rules, we have $$\tilde{f}_i \cdot (T \otimes T') = (\tilde{f}_i \cdot T) \otimes T'.$$ Hence,
\begin{align*}
\Pi(\tilde{f}_i \cdot (T \otimes T')) &= \begin{cases} (j_1 < \cdots < i_{p} + 1 < \cdots < i_\ell) \begin{array}{c}
\text{ if there exists } p > \alpha_T \text{ such that }\\
  i_p = i \text{ and } i_{p+1} \neq i+1,
\end{array}\\
0 \text{ otherwise}
\end{cases}\\
&= \tilde{f}_i \cdot \Pi(T \otimes T')
\end{align*}
\item[-] $i < \alpha_T$: then we have $i, i+1 \in T$ and by the tensor product rules, $$\tilde{f}_i \cdot (T \otimes T') = T \otimes (\tilde{f}_i \cdot T').$$ Furthermore, $j_{\alpha_T} \leq j_{\alpha_T+1}-1 = \alpha_T$. Hence, 
\begin{align*}
\Pi(\tilde{f}_i \cdot (T \otimes T')) &= \begin{cases} (j_1 < \cdots < j_{p} + 1 < \cdots < i_\ell)  \begin{array}{c}
\text{ if there exists } 1 \leq p < \alpha_T \text{ such that }\\
 i_p = i \text{ and } i_{p+1} \neq i+1,
\end{array} \\
0  \text{ otherwise}
\end{cases}\\
&= \tilde{f}_i \cdot \Pi(T \otimes T')
\end{align*}
\item[-] $i = \alpha_T$: then we have $i+1 \notin T$ and $\beta_{T'} \leq i+1$.

Assume that $\beta_{T'} < i+1$. In that case, $i, i+1 \in T'$ and we have $$\tilde{f}_i \cdot (T \otimes T') = (\tilde{f}_i \cdot T) \otimes T'.$$ Furthermore $\alpha_{\tilde{f}_i \cdot T} = \alpha_{T} - 1$ and $j_{\alpha_T} = i$. So we have
\begin{align*}
\Pi(\tilde{f}_i \cdot (T \otimes T')) &= (j_1 < \cdots < j_{\alpha_T-1} < i+1 < i_{\alpha_T + 1} < \cdots < i_\ell) \\
&= \tilde{f}_i \cdot (j_1 < \cdots < j_{\alpha_T-1} < i = j_{\alpha_T} < i_{\alpha_T + 1} < \cdots < i_\ell) \\
&= \tilde{f}_i \cdot \Pi(T \otimes T').
\end{align*}

If $\beta_{T'} = i+1$, we have $i \notin T'$. By the tensor product rules we get
$$\tilde{f}_i \cdot (T \otimes T') = T \otimes (\tilde{f}_i \cdot T') = 0.$$
Furthermore we have $\alpha_T = \beta_{T'}-1$ and $j_{\alpha_T} < \alpha_T = i < i_{\alpha_T+1}$. Then \linebreak $i \notin \Pi(T \otimes T')$ and $\tilde{f}_i \cdot \Pi(T \otimes T') = 0$.
\end{itemize}
As the application $\Pi : \mathcal{T}_\ell \rightarrow \mathcal{T}_{[1, \ell]}$ is also bijective, this is an isomorphism of \linebreak $\U_q(sl_\infty)$-crystals.
\end{proof}

\begin{rem}\label{remlienmoncriinf}
Let us describe this isomorphism at the level of the monomial crystals (see \cite{kashiwara_realizations_2003, mansuy_quantum_2012, nakajima_$t$-analogs_2003} for its definition). For that denote by $\mathcal{M}_\ell$ the sub-$\U_q(sl_\infty)$-crystal of $\mathcal{M}(Y_{\ell, a}) \otimes \mathcal{M}(Y_{0, aq^{\ell}}^{-1})$ generated by $Y_{\ell, a} \otimes Y_{0, aq^{\ell}}^{-1}$. Let us consider also $\mathcal{M}(Y_{\ell, a}Y_{0, aq^{\ell}}^{-1})$ the monomial crystal generated by $Y_{\ell, a}Y_{0, aq^{\ell}}^{-1}$. By that we have done above, we have
\begin{align*}
\mathcal{M}(Y_{\ell, a}Y_{0, aq^{\ell}}^{-1}) = \lbrace m_T = \prod_{1 \leq j \leq \ell} \ffbox{i_j}_{aq^{\ell+1-2j}} \ \vert \ T = (i_1 < i_2 < \cdots < i_\ell) \in \mathcal{T}_{[1, \ell]} \rbrace
\end{align*}
and
\begin{align*}
\mathcal{M}_\ell = \lbrace m_T \otimes m_{T'} \vert T \otimes T' \in \mathcal{T}_\ell \rbrace.
\end{align*}
In the monomial realization, $\Pi$ is simply given by the product in $A$
$$\Pi(m_1 \otimes m_2) = m_1 \times m_2 \text{ with } m_1 \otimes m_2 \in \mathcal{M}_\ell.$$
\end{rem}

As a direct consequence, we have

\begin{prop}
Let $\ell \geq 1$ and $a \in \CC^{\ast}$. The $\U_q(\hat{sl}_\infty)$-module $V(Y_{\ell, a}Y_{0, aq^\ell}^{-1})$ is thin, and we have
\begin{equation}\label{qchainf}
\chi_q(V(Y_{\ell, a}Y_{0, aq^\ell}^{-1})) = \sum_{T \in \mathcal{T}_{[1, \ell]}} m_T
\end{equation}
where $m_T = \displaystyle \prod_{1 \leq j \leq \ell} \ffbox{i_j}_{aq^{\ell+1-2j}}$.
\end{prop}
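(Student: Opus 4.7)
The plan is to leverage the fact that the Drinfeld coproduct is diagonal on the Cartan currents, so that each basis vector of $V(Y_{\ell, a}Y_{0, aq^\ell}^{-1})$ inherits an explicit $\ell$-weight, and then to use the crystal isomorphism $\Pi$ of the preceding proposition to re-index these $\ell$-weights by semi-standard tableaux of shape $(\ell)$. Thinness will follow from a direct check that the resulting monomials have no accidental coincidences.

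First, I would observe that formulas (\ref{actonfundinf}) exhibit each basis vector $T_a$ of $V(Y_{\ell, a})$ (for $T \in \mathcal{T}_\ell^+$) as a simultaneous eigenvector of all $\phi_i^\pm(z)$ with monomial $m_T$, and symmetrically each $T'_{aq^\ell} \in V(Y_{0, aq^\ell}^{-1})$ (for $T' \in \mathcal{T}_0^-$) as an $\ell$-weight vector with monomial $m_{T'}$. Since $\Delta(\phi_i^\pm(z)) = \phi_i^\pm(z) \otimes \phi_i^\pm(z)$ by (\ref{deltacart}), every basis vector $T_a \otimes T'_{aq^\ell}$ with $T \otimes T' \in \mathcal{T}_\ell$ is an $\ell$-weight vector of $V(Y_{\ell, a}Y_{0, aq^\ell}^{-1})$ of monomial $m_T \cdot m_{T'}$. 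By Remark \ref{remlienmoncriinf}, this product coincides with $m_{\Pi(T \otimes T')}$, where $\Pi(T \otimes T') \in \mathcal{T}_{[1,\ell]}$ and $m_S = \prod_{1 \leq j \leq \ell} \ffbox{i_j}_{aq^{\ell+1-2j}}$ for $S = (i_1 < \cdots < i_\ell) \in \mathcal{T}_{[1,\ell]}$.

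It then remains to show that the assignment $S \mapsto m_S$ is injective on $\mathcal{T}_{[1, \ell]}$, which will at once yield the thinness of $V(Y_{\ell, a}Y_{0, aq^\ell}^{-1})$ and the $q$-character formula (the latter being just the basis sum re-indexed via $\Pi$). Unfolding $\ffbox{i_j}_{aq^{\ell+1-2j}} = Y_{i_j-1,\,aq^{\ell+2-2j+i_j}}^{-1}\, Y_{i_j,\,aq^{\ell-2j+i_j}}$, I would verify that no cancellation can occur inside $m_S$ between a positive factor $Y_{i_j, aq^{\ell-2j+i_j}}$ at position $j$ and a negative factor $Y_{i_k-1, aq^{\ell+2-2k+i_k}}^{-1}$ at position $k$: equating the two would force simultaneously $i_j = i_k - 1$ and $2(k-j) = 3$, which has no integer solution. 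Consequently $m_S$ retains the $\ell$ distinct positive factors $Y_{i_j, aq^{\ell-2j+i_j}}$ for $1 \leq j \leq \ell$, and the multiset of their first subscripts, sorted in increasing order, recovers $S$ uniquely.

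The main, if mild, obstacle is therefore the injectivity check in the last step; the rest is a direct application of the Drinfeld coproduct together with the crystal isomorphism $\Pi$ already established. Once injectivity is in hand, the basis vectors $T_a \otimes T'_{aq^\ell}$ indexed by $\mathcal{T}_\ell$ carry pairwise distinct monomials, all $\ell$-weight spaces are one-dimensional, and summing them over the basis (equivalently, over $\mathcal{T}_{[1,\ell]}$ via $\Pi$) yields exactly (\ref{qchainf}).
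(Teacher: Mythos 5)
Your first two steps are exactly the paper's proof: the basis vectors $T_a \otimes T'_{aq^{\ell}}$, $T \otimes T' \in \mathcal{T}_\ell$, are $\ell$-weight vectors of $\ell$-weight $m_T \times m_{T'}$ by (\ref{deltacart}), and this product equals $m_{\Pi(T\otimes T')}$ by Remark \ref{remlienmoncriinf}, whence the formula (\ref{qchainf}) by bijectivity of $\Pi$. The genuine gap is in your added injectivity check, which rests on an off-by-one error. Since $\ffbox{j}_a = Y_{j-1,aq^{j}}^{-1}Y_{j,aq^{j-1}}$, the correct unfolding is $\ffbox{i_j}_{aq^{\ell+1-2j}} = Y_{i_j-1,\,aq^{\ell+1-2j+i_j}}^{-1}\,Y_{i_j,\,aq^{\ell-2j+i_j}}$, not $aq^{\ell+2-2j+i_j}$ in the negative factor as you wrote. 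With the correct exponent, a cancellation between the positive factor at position $j$ and the negative factor at position $k$ requires $i_j = i_k - 1$ together with $\ell-2j+i_j = \ell+1-2k+i_k$, which gives $2(k-j)=2$, i.e. $k = j+1$ and $i_{j+1} = i_j+1$ --- not only possible but ubiquitous. For instance, for $S = (1 < 2 < \cdots < \ell)$ the product telescopes completely and $m_S = Y_{\ell,a}Y_{0,aq^{\ell}}^{-1}$, which is precisely the $\ell$-weight after which the module is named; so the claim that $m_S$ retains $\ell$ distinct positive factors is false, and your reconstruction of $S$ from the first subscripts of those factors collapses with it.

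Injectivity of $S \mapsto m_S$ is nevertheless true and your argument can be repaired: split $S$ into maximal runs of consecutive integers; within a run with entries $b < b+1 < \cdots < c$ occupying positions $p, \ldots, p'$ the factors telescope, leaving exactly $Y_{b-1,\,aq^{\ell+1-2p+b}}^{-1}\,Y_{c,\,aq^{\ell-2p'+c}}$, and the computation above shows no cancellation can occur between distinct runs (it would force adjacent positions carrying consecutive entries, i.e. the same run), nor between two positive or two negative survivors (the entries $i_j$ are strictly increasing). The surviving factors therefore determine the values and, via the spectral parameters, the positions of the run endpoints, hence $S$. Alternatively, you can sidestep the computation entirely, as the paper implicitly does: by Remark \ref{remlienmoncriinf} the assignment $T \mapsto m_T$ identifies $\mathcal{T}_{[1,\ell]}$ with the monomial crystal $\mathcal{M}(Y_{\ell,a}Y_{0,aq^{\ell}}^{-1})$, whose elements are by construction pairwise distinct monomials, so injectivity --- and with it thinness --- comes for free.
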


\begin{proof}
Let us recall that we have a basis of $\ell$-weight vectors of $V(Y_{\ell, a}Y_{0, aq^\ell}^{-1})$: it is given by the $T_a \otimes T'_{aq^{\ell}}$ in $ \mathcal{T}_\ell$. Furthermore by (\ref{deltacart}) and the remark above, the $\ell$-weight of the vector $T_a \otimes T'_{aq^{\ell}}$ is
$$m_T \times m_{T'} = m_{\Pi(T \otimes T')}.$$
Hence  we have
$$\chi_q(V(Y_{\ell, a}Y_{0, aq^\ell}^{-1})) = \sum_{T \in \mathcal{T}_{[1, \ell]}} m_T.$$
\end{proof}

\begin{rem}
In particular we have shown here that the monomial crystal $\mathcal{M}(Y_{\ell, a}Y_{0, aq^{\ell}}^{-1})$ generated by $Y_{\ell, a}Y_{0, aq^{\ell}}^{-1}$ is closed in the sense of \cite[Definition 3.6]{mansuy_quantum_2012}.
\end{rem}

By the bijection $\Pi$, we parametrize the basis of $V(Y_{\ell, a}Y_{0, aq^\ell}^{-1})$ by the $\U_q(sl_\infty)$-crystal $\mathcal{T}_{[1, \ell]}$. Let us explicit formulae of the action on it.

\begin{thm}\label{thmformactinf}
The action of $\U_q(\hat{sl}_\infty)$ on $V(Y_{\ell, a}Y_{0, aq^\ell}^{-1})$ is given for all \linebreak $T = (i_1 < i_2 < \cdots < i_\ell) \in \mathcal{T}_{[1, \ell]}$ by
\begin{eqnarray}\label{formactinf}
\begin{array}{rcl}
x_i^+(z) \cdot T_a &=& \displaystyle \sum_{1 \leq p \leq \ell} \delta_{\{i_p=i+1\}} \delta(aq^{i+\ell + 1-2p}z) (\tilde{e}_i \cdot T)_a,\\
x_{i}^-(z) \cdot T_a &=& \displaystyle \sum_{1 \leq p \leq \ell} \delta_{\{i_p=i\}} \delta(aq^{i + \ell + 1-2p}z) (\tilde{f}_i \cdot T)_a,\\
\phi_{i}^\pm(z) \cdot T_a &=& \begin{cases} \psi(aq^{i + \ell + 3 -2p}z)^{-1} \cdot T_a & \begin{array}{c} \text{ if there exists } 1 \leq p \leq \ell \text{ such that } \\ i_p=i+1 \text{ and } i_{p-1} \neq i,
\end{array}\\
\psi(aq^{i + \ell + 1-2p}z) \cdot T_a & \begin{array}{c} \text{ if there exists } 1 \leq p \leq \ell \text{ such that } \\  \text{ if } i_p=i \text{ and } i_{p+1} \neq i+1,
\end{array}\\
T_a & \text{ otherwise.} \end{cases}
\end{array}
\end{eqnarray}
\end{thm}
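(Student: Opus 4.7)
The strategy is to compute the action of each Drinfeld generator directly on the basis vector $T_a \otimes T'_{aq^\ell}$ of $V(Y_{\ell,a}Y_{0,aq^\ell}^{-1})$ via the coproduct $\Delta$, using the explicit formulas for the action on $V(Y_{\ell,a})$ and $V(Y_{0,aq^\ell}^{-1})$ recalled at the start of Section 3, and then translate the result into a formula on $U = \Pi(T \otimes T') \in \mathcal{T}_{[1,\ell]}$ via the crystal isomorphism $\Pi$. The case of $\phi_i^\pm(z)$ is the most direct: since $\Delta(\phi_i^\pm(z)) = \phi_i^\pm(z) \otimes \phi_i^\pm(z)$, the scalar acting on $T_a \otimes T'_{aq^\ell}$ is the product of the two scalars on the factors, and a case analysis of where the pair $(i,i+1)$ appears---wholly inside $T$, wholly inside $T'$, or straddling the boundary index $\alpha_T$---shows that this product matches the three-case formula for $U$.

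For $x_i^+(z)$ I would expand $\Delta(x_i^+(z)) = x_i^+(z) \otimes 1 + \phi_i^-(z) \otimes x_i^+(z)$. The first summand $(A)$ acts on $T$ and contributes $\delta(aq^{i+\ell+1-2r}z)\,(\tilde{e}_i \cdot T)_a \otimes T'_{aq^\ell}$ for each $r$ with $i_r = i+1$ and $i_{r-1} \neq i$; any such $r$ necessarily satisfies $r > \alpha_T$, so these contributions correspond to the positions $p > \alpha_T$ in $U$. The second summand $(B)$ acts on $T'$ at each $s$ with $j_s = i+1$ and $j_{s-1} \neq i$, producing $T_a \otimes (\tilde{e}_i \cdot T')_{aq^\ell}$ multiplied by $\delta(aq^{i+\ell+1-2s}z)$ and by the value of $\phi_i^-(z) \cdot T_a$ evaluated at $z = a^{-1} q^{-(i+\ell+1-2s)}$. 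The remaining work is to analyze this evaluation, which simplifies to one of $1$, $\psi(q^{2(s-r)+2})^{-1}$, or $\psi(q^{2(s-r)})$, depending on which boundary configuration of $T$ is present.

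The core combinatorial step uses the defining constraint $\alpha_T \geq \beta_{T'} - 1$ of $\mathcal{T}_\ell$ together with the fact that positions $\leq \alpha_T$ of $U$ come from $T'$. First, the contributions $(A)$ and $(B)$ land on disjoint positions of $U$: if $j_s = i+1$ in $T'$ with $s \leq \alpha_T$, then $i+1$ cannot also appear in $T$ at a position $r > \alpha_T$, otherwise $U$ would repeat the entry $i+1$. Second, any $s > \alpha_T$ with $j_s = i+1$ and $j_{s-1} \neq i$ is forced to satisfy $s = \beta_{T'} = i+1$, and this configuration requires $\alpha_T = i$ and $i_{\alpha_T+1} \neq i+1$, so the $\phi_i^-$-factor evaluates to $\psi(q^2) = 0$ and kills the would-be spurious contribution---mirroring exactly the cancellation already used to show that $\Delta$ preserves $V(Y_{\ell,a}Y_{0,aq^\ell}^{-1})$. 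Third, for $s \leq \alpha_T$ with $j_s = i+1$, the boundary conditions on $T$ giving a nontrivial $\psi$-factor are incompatible with $T \otimes T' \in \mathcal{T}_\ell$, so $\phi_i^-(z) \cdot T_a = T_a$ and the coefficient is simply $1$. Assembling these, $x_i^+(z)$ acts with coefficient $1$ at each position $p \in \{1,\ldots,\ell\}$ of $U$ where $u_p = i+1$ and $u_{p-1} \neq i$; combined with the intertwining of $\Pi$ and the Kashiwara operators already established before the theorem, this is the claimed formula.

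The action of $x_i^-(z)$ is proven by the symmetric argument using $\Delta(x_i^-(z)) = x_i^-(z) \otimes \phi_i^+(z) + 1 \otimes x_i^-(z)$, interchanging the roles of $T$ and $T'$ and of $i$ and $i+1$. The main obstacle will be the bookkeeping of the $\psi$-factors at the crossover between $T$ and $T'$: one must verify that the singular value $\psi(q^2) = 0$ appears precisely in the configurations excluded from $\mathcal{T}_\ell$ and that all other potential boundary corrections simplify to $1$. The thin and stabilized (eventually-identity) structure of the tableaux in $\mathcal{T}_\ell^+$ and $\mathcal{T}_0^-$ is what makes these reductions possible and yields a significantly cleaner argument than in the quantum toroidal setting of \cite{mansuy_extremal_2013}.
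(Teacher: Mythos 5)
Your proposal is correct, but it follows a genuinely different route from the paper. The paper's proof does not push the full currents through the Drinfeld coproduct at all: having already established that $V(Y_{\ell, a}Y_{0, aq^\ell}^{-1})$ is thin with $\ell$-weights $m_{\Pi(T \otimes T')}$, it observes that the action of the whole algebra is then determined by the action of the horizontal subalgebra via the relations (\ref{relxcartpl}) and (\ref{relxcartmo}) (thinness forces the $z$-dependence of $x_i^{\pm}(z)$ between two one-dimensional $\ell$-weight lines to be the appropriate delta function), so it only computes the zero modes $x_{i,0}^{\pm} \cdot (T \otimes T')_a$ and checks they realize the Kashiwara operators, by exactly the case analysis you describe ($i > \alpha_T$, $i < \alpha_T$, $i = \alpha_T$ with $\beta_{T'} < i+1$ or $\beta_{T'} = i+1$). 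Your direct computation of the full currents is more laborious but self-contained: it produces the spectral parameters $\delta(aq^{i+\ell+1-2p}z)$ explicitly from the formulas on the two factors rather than deducing them from thinness, and your three combinatorial points (disjointness of the two coproduct contributions, the forced configuration $s = \beta_{T'} = i+1$ with $\alpha_T = i$ when $s > \alpha_T$, and triviality of the $\psi$-factor when $s \leq \alpha_T$ because then $i$ and $i+1$ both sit in $T$ at their own positions) are all sound, and indeed reuse the same cancellation as in the proof of Proposition \ref{tpexistudinf}. One small slip: on the $x_i^-$ side the vanishing crossover coefficient is $\psi(1)^{-1} = 0$ (a pole of $\psi$), not $\psi(q^2) = 0$; both mechanisms appear in the paper, compare the displayed equalities (\ref{eqinterinf2}). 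This does not affect the validity of your argument, but the bookkeeping you defer in your last paragraph should distinguish the two. In exchange for the extra work, your approach does not need the thinness reduction, while the paper's is shorter precisely because thinness makes the current-level formulas automatic once the zero modes are matched with $\tilde{e}_i$ and $\tilde{f}_i$.
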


\begin{rem}
Note that these formulae are identical to the one (\ref{actonfundinf}) obtained for the fundamental $\U_q(\hat{sl}_\infty)$-modules. Actually this result generalizes \cite[Theorem 4.11]{mansuy_quantum_2012} for the extremal fundamental loop weight modules of $\U_q(sl_{n+1}^{tor})$. In fact one can rewrite the action on $V(Y_{\ell, a}Y_{0, aq^\ell}^{-1})$ in the context of monomial realizations: it is completely describe by the monomial $\U_q(sl_\infty)$-crystal $\mathcal{M}(Y_{\ell, a}Y_{0, aq^\ell}^{-1})$ and the formulae given in \cite[Theorem 4.11]{mansuy_quantum_2012}.
\end{rem}

\begin{proof}
We have shown that the $\U_q(\hat{sl}_\infty)$-module $V(Y_{\ell, a}Y_{0, aq^\ell}^{-1})$ is thin. So to determine the action on this module, it suffices to know the action of the horizontal quantum affine subalgebra $\U_q^h(\hat{sl}_\infty)$. The action of all the algebra $\U_q(\hat{sl}_\infty)$ can be deduced from it by (\ref{relxcartpl}) and (\ref{relxcartmo}).

So let us determine the action of $x_{i,0}^+$ ($i \in \ZZ$) on $V(Y_{\ell, a}Y_{0, aq^\ell}^{-1})$ (we proceed in the same way for $x_{i, 0}^{-}$). Let  $T = (\cdots < i_{\ell-2} < i_{\ell-1} < i_{\ell})$ and $T' = (j_1 < j_2 < j_3 < \cdots )$ be semi-standard tableaux such that $T \otimes T' \in \mathcal{T}_\ell$. Denote by $(T \otimes T')_a$ the vector $T_a \otimes T'_{aq^{\ell}} \in V(Y_{\ell, a}Y_{0, aq^\ell}^{-1})$. Then $x_{i, 0}^- \cdot (T \otimes T')_a$ is equal to
\begin{eqnarray*}
\begin{array}{c}
\sum_{s \geq 1} \delta_{\{j_s = i \}} \cdot T_a \otimes (\tilde{f}_i \cdot T')_{aq^{\ell}}
 + (\tilde{f}_i \cdot T)_a \otimes T'_{aq^{\ell}} \cdot \displaystyle \sum_{ r  \leq \ell, s \geq 1} \delta_{\{i_r = i \}}  \times  \\
 \left( 1 + \delta_{\{j_s = i + 1 \}} \delta_{\{j_{s-1} \neq i \}} \left( \psi(q^{2(r-s)+2})^{-1} - 1 \right) + \delta_{\{j_s = i \}} \delta_{\{j_{s+1} \neq i+1 \}} \left( \psi(q^{2(r-s)}) - 1 \right) \right) .
\end{array}
\end{eqnarray*}

\noindent We have to consider the following cases
\begin{itemize}
\item[-] $ i > \alpha_T$: then $i, i+1 \in T'$. So we get
\begin{eqnarray*}
x_{i, 0}^- \cdot (T \otimes T')_a = (\tilde{f}_i \cdot T)_a \otimes T'_{aq^{\ell}} = \left(\tilde{f}_i \cdot (T \otimes T') \right)_a.
\end{eqnarray*}

\item[-] $i < \alpha_T$: then we have $i, i+1 \in T$ and
\begin{eqnarray*}
x_{i, 0}^- \cdot (T \otimes T')_a = T_a \otimes (\tilde{f}_i \cdot T')_{aq^{\ell}} = \left( \tilde{f}_i \cdot (T \otimes T') \right)_a.
\end{eqnarray*}

\item[-] $i = \alpha_T$: then we have $i \in T, i+1 \notin T$ and $\beta_{T'} \leq i+1$.

Assume that $\beta_{T'} < i+1$. In this case, $i, i+1 \in T'$ and we get
\begin{eqnarray*}
x_{i, 0}^- \cdot (T \otimes T')_a = (\tilde{f}_i \cdot T)_a \otimes T'_{aq^{\ell}} = \left( \tilde{f}_i \cdot (T \otimes T') \right)_a.
\end{eqnarray*}

If $\beta_{T'} = i+1$, we have $i \notin T', \alpha_T = \beta_{T'}-1$ and
\begin{align*}
x_{i, 0}^- \cdot (T \otimes T')_a & = \psi(q^{2(\alpha_T - \beta_{T'}) + 2})^{-1} (\tilde{f}_i \cdot T)_a \otimes T'_{aq^{\ell}} = \psi(1)^{-1} (\tilde{f}_i \cdot T)_a \otimes T'_{aq^{\ell}} = 0 \\
& = \left( \tilde{f}_i \cdot (T \otimes T') \right)_a.
\end{align*}
\end{itemize}

So we have shown that $x_{i, 0}^- \cdot (T \otimes T')_a = \left( \tilde{f}_i \cdot (T \otimes T') \right)_a$. We proceed in the same way for $x_{i,0}^+$.
\end{proof}

As consequences of this theorem, we obtain

\begin{prop}\label{felwmirredinf}
Fix $\ell \geq 1$ and $a \in \CC^{\ast}$. Then $V(Y_{\ell, a}Y_{0, aq^\ell}^{-1})$ is irreducible as a $\U_q(\hat{sl}_\infty)$-module and as a $\U_q^h(\hat{sl}_\infty)$-module.
\end{prop}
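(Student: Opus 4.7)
The plan is to establish the stronger claim — irreducibility as a $\U_q^h(\hat{sl}_\infty)$-module — from which irreducibility as a $\U_q(\hat{sl}_\infty)$-module follows automatically, since any submodule for the larger algebra is also a submodule for the horizontal subalgebra.

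Three observations are needed. First, the $\U_q(sl_\infty)$-weights of the basis vectors are pairwise distinct: for $T = (i_1 < \cdots < i_\ell) \in \mathcal{T}_{[1,\ell]}$ the weight
$$\wt(T) = \sum_{j=1}^{\ell} (\Lambda_{i_j} - \Lambda_{i_j-1})$$
encodes the finite-support function $k \mapsto \mathbf{1}_{\{i_1,\ldots,i_\ell\}}(k) - \mathbf{1}_{\{i_1,\ldots,i_\ell\}}(k+1)$, which uniquely determines the finite set $\{i_1,\ldots,i_\ell\}$ and hence $T$; consequently all $\U_q(sl_\infty)$-weight spaces of $V(Y_{\ell,a}Y_{0,aq^\ell}^{-1})$ are one-dimensional. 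Second, extracting the $z^0$ coefficient in the formulas of Theorem \ref{thmformactinf} and observing that any given integer occurs at most once in a strictly increasing tableau, one obtains $x_{i,0}^{+} \cdot T_a = (\tilde{e}_i \cdot T)_a$ and $x_{i,0}^{-} \cdot T_a = (\tilde{f}_i \cdot T)_a$, so the horizontal generators act on the basis by the $\U_q(sl_\infty)$-crystal operators. Third, the crystal $\mathcal{T}_{[1,\ell]}$ is connected: this is immediate from the earlier proposition identifying $\mathcal{T}_\ell$ as the connected sub-$\U_q(sl_\infty)$-crystal of $\mathcal{T}_\ell^+ \otimes \mathcal{T}_0^-$ generated by $(\cdots < \ell-1 < \ell) \otimes (1 < 2 < \cdots)$, combined with the crystal isomorphism $\Pi : \mathcal{T}_\ell \to \mathcal{T}_{[1,\ell]}$.

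To conclude, let $U$ be a nonzero $\U_q^h(\hat{sl}_\infty)$-submodule and pick $0 \neq v = \sum_{T \in S} c_T T_a \in U$ with $S \subset \mathcal{T}_{[1,\ell]}$ finite and all $c_T \in \CC^{\ast}$. Since the weights $\{\wt(T)\}_{T \in S}$ are pairwise distinct by the first observation, a Lagrange interpolation polynomial in the commuting family $\{k_h\}_{h \in \Hlie} \subset \U_q^h(\hat{sl}_\infty)$ isolates each summand, producing some $T_a \in U$. Starting from this $T_a$, the second and third observations let us reach every basis vector $T'_a$ by iterated applications of $x_{i,0}^{\pm}$, so $U$ contains the full basis and thus equals $V(Y_{\ell,a}Y_{0,aq^\ell}^{-1})$. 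There is no serious obstacle here: the one potentially delicate step — the combinatorial connectedness of $\mathcal{T}_{[1,\ell]}$ — comes for free from the crystal isomorphism $\Pi$ already established.
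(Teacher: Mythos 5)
Your proof is correct and follows essentially the same route as the paper: one-dimensional weight spaces force any nonzero $\U_q^h(\hat{sl}_\infty)$-submodule to contain some basis vector $T_a$, and the fact that $x_{i,0}^{\pm}$ act on the basis by the Kashiwara operators together with the connectedness of $\mathcal{T}_{[1,\ell]}$ (via $\Pi$) shows $T_a$ generates everything, which is exactly the content of the paper's appeal to the $q$--character formula (\ref{qchainf}) and Lemma \ref{lirepcryinf}. The only difference is presentational: you spell out the weight separation (interpolation in the commuting $k_h$) and the cyclicity step that the paper delegates to Lemma \ref{lirepcryinf}.
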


\noindent To prove this proposition, we use the following result (which is an analogue of \cite[Lemma 4.8]{mansuy_quantum_2012}).

\begin{lem}\label{lirepcryinf}
Let $\mathcal{B}$ be a $\U_q(sl_\infty)$-crystal. Assume that $V$ is a $\U_q(sl_\infty)$-module with basis $(v_b)_{b \in \mathcal{B}}$ indexed by $\mathcal{B}$ which satisfies
\begin{eqnarray}\label{lienrepcryinf}
\wt(v_b) = \wt(b), \ (x_{i}^{+})^{(k)} \cdot v_b = v_{\tilde{e}_i^k \cdot b} \text{ and } (x_{i}^{-})^{(k)} \cdot v_b = v_{\tilde{f}_i^k \cdot b}
\end{eqnarray}
for all $b \in \mathcal{B}, i \in I$ and $k \in \NN$, where $v_0 = 0$ by convention. If the element $b \in \mathcal{B}$ is extremal of weight $\lambda$, then the vector $v_b$ is an extremal vector of weight $\lambda$. Furthermore if the crystal $\mathcal{B}$ is connected, then the $\U_q(sl_\infty)$-module $V$ is cyclic generated by any $v_b$ with $b \in \mathcal{B}$.
\end{lem}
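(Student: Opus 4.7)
The plan is to bootstrap the crystal-theoretic notion of extremality onto the module side, using the hypothesis (\ref{lienrepcryinf}) as a dictionary that translates Kashiwara operators into divided powers of the Chevalley generators.

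First I would verify that $i$-extremality passes from $b$ to $v_b$. Since $x_i^+ \cdot v_b = v_{\tilde{e}_i \cdot b}$ and $x_i^- \cdot v_b = v_{\tilde{f}_i \cdot b}$ (with $v_0 = 0$), the conditions $\tilde{e}_i \cdot b = 0$ and $\tilde{f}_i \cdot b = 0$ translate immediately into $x_i^+ \cdot v_b = 0$ and $x_i^- \cdot v_b = 0$ respectively. Next I would identify the reflection operators: if $\tilde{e}_i \cdot b = 0$ then $\wt(b)(h_i) \geq 0$ and $S_i(b) = \tilde{f}_i^{\wt(b)(h_i)} \cdot b$, so by (\ref{lienrepcryinf}) applied with $k = \wt(b)(h_i) = \wt(v_b)(h_i)$ one obtains
\begin{equation*}
S_i(v_b) = (x_i^-)^{(\wt(v_b)(h_i))} \cdot v_b = v_{\tilde{f}_i^{\wt(b)(h_i)} \cdot b} = v_{S_i(b)}.
\end{equation*}
The symmetric identity $S_i(v_b) = v_{S_i(b)}$ holds in the other case $\tilde{f}_i \cdot b = 0$. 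Hence the action of $S_i$ on basis vectors labelled by $i$-extremal elements of $\mathcal B$ mirrors exactly the crystal operator $S_i$ on $\mathcal B$.

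With these two observations I would conclude the extremality statement by induction on $l$: if $b$ is extremal then by hypothesis $S_{i_1} \cdots S_{i_l}(b)$ is $i$-extremal in $\mathcal B$ for every $i$ and every sequence, and iterating the identity above gives
\begin{equation*}
S_{i_1} \cdots S_{i_l}(v_b) = v_{S_{i_1} \cdots S_{i_l}(b)},
\end{equation*}
which is therefore $i$-extremal in $V$ (by the first observation applied to the crystal element $S_{i_1} \cdots S_{i_l}(b)$). This shows that $v_b$ is extremal of weight $\wt(b) = \lambda$.

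For the cyclicity statement I would use connectedness of $\mathcal B$: for any $b' \in \mathcal B$ there is a finite sequence of Kashiwara operators $\tilde{e}_{i_k}^{\varepsilon_k}$ or $\tilde{f}_{i_k}^{\varepsilon_k}$ taking $b$ to $b'$, and by (\ref{lienrepcryinf}) applied at each step (with $k=1$) the corresponding product of $x_{i_k}^{\pm}$ sends $v_b$ to a nonzero scalar multiple of $v_{b'}$. Therefore every basis vector of $V$ lies in $\U_q(sl_\infty) \cdot v_b$, which proves cyclicity. There is no serious obstacle: the whole argument is a mechanical translation, the only point requiring some care being to notice that the divided-power identity in (\ref{lienrepcryinf}) is what matches the definition of $S_i$ involving $(x_i^\pm)^{(|\lambda(h_i)|)}$, so that the equality $S_i(v_b) = v_{S_i(b)}$ holds on the nose rather than up to a nonzero scalar.
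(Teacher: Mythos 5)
Your proof is correct and is in substance the same as the paper's: both use (\ref{lienrepcryinf}) as a dictionary translating Kashiwara operators into divided powers of the Chevalley generators, for the extremality claim as well as for cyclicity via connectedness of $\mathcal{B}$. The only cosmetic difference is that the paper verifies extremality through Kashiwara's equivalent characterization by a Weyl-group-indexed family $\{b_w\}_{w \in W}$, setting $v_w = v_{b_w}$ and checking the module-level relations for that family, whereas you check the paper's stated definition directly by induction on the length of $S_{i_1} \cdots S_{i_l}$ --- both arguments reduce to the same key identity $S_i(v_b) = v_{S_i(b)}$ at $i$-extremal elements.
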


\noindent The proof is similar to the one of \cite[Lemma 4.8]{mansuy_quantum_2012}. We recall it for the convenience of the reader.

\begin{proof}
Assume that $b \in \mathcal{B}$ is extremal of weight $\lambda$: there exists $\{b_w\}_{w \in W}$ such that $b_{Id}= b$ and
\begin{eqnarray}\label{eqinterinf}
\begin{array}{c}
\tilde{e}_i \cdot b_w=0 \text{ and }(\tilde{f}_i)^{w(\lambda)(h_i)} \cdot b_w= b_{s_i(w)} \text{ if } w(\lambda)(h_i)\geq 0,\\
\tilde{f}_i \cdot b_w=0 \text{ and }(\tilde{e}_i)^{-w(\lambda)(h_i)} \cdot b_w= b_{s_i(w)} \text{ if } w(\lambda)(h_i)\leq 0.
\end{array}
\end{eqnarray}
For all $w \in W$, set $v_w = v_{b_w}$. By (\ref{lienrepcryinf}) and (\ref{eqinterinf}), $\{v_w\}_{w \in W}$ satisfies $v_{Id} = v_{m}$ and
 $$x_i^{\pm} \cdot v_w=0 \text{ if } \pm w(\lambda)(h_i)\geq 0 \text{ and }(x_i^{\mp})^{(\pm w(\lambda)(h_i))} \cdot v_w=v_{s_i(w)}.$$
Hence the vector $v_m$ is extremal of weight $\lambda$.

Assume that the crystal $\mathcal{B}$ is connected and fix $b \in \mathcal{B}$. For $b' \in \mathcal{B}$, there exists a product $s$ of Kashiwara operators such that $s(b) = b'$. Consider the corresponding operator $S \in \U_q(sl_\infty)$ at the level of $V$, i.e. $S$ has the same expression as $s$ where the operators $\tilde{e}_i^k$ (resp. $\tilde{f}_i^k$) are replaced by $(x_{i}^{+})^{(k)}$ (resp. $(x_i^{-})^{(k)}$) in the product ($k \in \NN, i \in I$). By (\ref{lienrepcryinf}), $S(v_b) = v_{s(b)} = v_{b'}$ and the $\U_q(sl_\infty)$-module $V$ is cyclic generated by $v_b$.
\end{proof}

We are now able to prove Proposition \ref{felwmirredinf}.

\begin{proof}
Let us consider a non trivial sub-$\U_q^h(\hat{sl}_\infty)$-module $V$ of $V(Y_{\ell, a}Y_{0, aq^\ell}^{-1})$. By the $q$--character formula (\ref{qchainf}) all the weight spaces of $V(Y_{\ell, a}Y_{0, aq^\ell}^{-1})$ are of dimension one. So there exists $T \in \mathcal{T}_{[1, \ell]}$ such that $T_a \in V$. By Lemma \ref{lirepcryinf}, the vector $T_a$ generates $V(Y_{\ell, a}Y_{0, aq^\ell}^{-1})$ and $V = V(Y_{\ell, a}Y_{0, aq^\ell}^{-1})$. Hence $V(Y_{\ell, a}Y_{0, aq^\ell}^{-1})$ is simple as a $\U_q(\hat{sl}_\infty)$-module and as a $\U_q^h(\hat{sl}_\infty)$-module. 
\end{proof}

\begin{thm}\label{thmelwminf}
Fix $\ell \geq 1$ and $a \in \CC^{\ast}$. Then $V(Y_{\ell, a}Y_{0, aq^\ell}^{-1})$ is an extremal loop weight $\U_q(\hat{sl}_\infty)$-module generated by the vector $(1 < 2 < \cdots < \ell)_a$ of $\ell$-weight $Y_{\ell, a}Y_{0, aq^\ell}^{-1}$.
\end{thm}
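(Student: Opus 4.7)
The plan is to verify the three defining conditions of Definition \ref{defelminf} for $v_0 := (1 < 2 < \cdots < \ell)_a$ with $\ell$-weight $Y_{\ell, a}Y_{0, aq^\ell}^{-1}$. First I confirm the $\ell$-weight: using $\ffbox{j}_{aq^{\ell+1-2j}} = Y_{j-1, aq^{\ell+1-j}}^{-1} Y_{j, aq^{\ell-j}}$ and telescoping the product over $j = 1, \ldots, \ell$, all intermediate factors cancel and leave precisely $m_{T_0} = Y_{\ell, a} Y_{0, aq^\ell}^{-1}$, in agreement with the $q$-character formula (\ref{qchainf}) and Remark \ref{remlienmoncriinf}.

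Condition (i) is immediate from Proposition \ref{felwmirredinf}: since $V(Y_{\ell, a}Y_{0, aq^\ell}^{-1})$ is irreducible as a $\U_q(\hat{sl}_\infty)$-module, the non-zero vector $v_0$ generates it. For condition (ii), extremality of $v_0$ for $\U_q^h(\hat{sl}_\infty)$, I argue via the crystal structure. By Theorem \ref{thmformactinf} the restriction of the $\U_q(\hat{sl}_\infty)$-action to the horizontal subalgebra gives a $\U_q(sl_\infty)$-module with basis $(T_a)_{T \in \mathcal{T}_{[1, \ell]}}$ indexed by the $\U_q(sl_\infty)$-crystal $\mathcal{T}_{[1, \ell]}$, and thinness combined with the explicit action formulae ensures the divided-power compatibility (\ref{lienrepcryinf}) required by Lemma \ref{lirepcryinf}. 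It therefore suffices to show $T_0 \in \mathcal{T}_{[1, \ell]}$ is extremal of weight $\Lambda_\ell - \Lambda_0$. Transporting via $\Pi^{-1}$, the element $T_0$ corresponds to $b_+ \otimes b_- := (\cdots < \ell - 1 < \ell) \otimes (1 < 2 < 3 < \cdots) \in \mathcal{T}_\ell^+ \otimes \mathcal{T}_0^-$, where $b_+$ is the highest weight element of $\mathcal{T}_\ell^+$ of weight $\Lambda_\ell$ (annihilated by all $\tilde{e}_i$) and $b_-$ is the lowest weight element of $\mathcal{T}_0^-$ of weight $-\Lambda_0$ (annihilated by all $\tilde{f}_i$). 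By the tensor product rules, the $W$-orbit $(S_{i_1} \cdots S_{i_l} b_+) \otimes (S_{i_1} \cdots S_{i_l} b_-)$ decouples into the individual extremal orbits, so $b_+ \otimes b_-$ is extremal of weight $\Lambda_\ell - \Lambda_0$ (this is the crystal analogue of \cite[Lemma 8.2.1]{kashiwara_crystal_1994} that motivates the construction); hence so is $T_0$, and Lemma \ref{lirepcryinf} yields extremality of $v_0$.

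Finally for condition (iii): the fundamental module $V(Y_{\ell, a})$, being a simple $\ell$-highest weight module, satisfies $\U_q(\hat{sl}_\infty)_J \cdot w$ finite-dimensional for all $w$ and all finite $J$, by the argument in the proof of the proposition preceding Definition \ref{defelminf} (which invokes Theorem \ref{thmlimind}); the symmetric statement for the $\ell$-lowest weight module $V(Y_{0, aq^\ell}^{-1})$ follows by the analogous inductive-limit argument. Lemma \ref{tpalgv} then transfers this property to the tensor product, and hence to its submodule $V(Y_{\ell, a}Y_{0, aq^\ell}^{-1})$. The main obstacle is condition (ii): one must carefully translate crystal-theoretic extremality, obtained through a tensor product of infinite-type crystals, into module-theoretic extremality for the horizontal quantum affine subalgebra. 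Here the thin property of $V(Y_{\ell, a}Y_{0, aq^\ell}^{-1})$ is essential, as it pins down the divided-power action uniquely from the one-step action of Theorem \ref{thmformactinf} and makes Lemma \ref{lirepcryinf} applicable.
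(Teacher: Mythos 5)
Your overall architecture is exactly that of the paper's proof: integrability and condition (iii) of Definition \ref{defelminf} via Lemma \ref{tpalgv}, condition (i) via irreducibility (Proposition \ref{felwmirredinf}), and condition (ii) via Lemma \ref{lirepcryinf} combined with extremality of $T_0 = (1 < 2 < \cdots < \ell)$ in the crystal $\mathcal{T}_{[1,\ell]}$ (the paper asserts this last fact without proof). However, the justification you supply for that crystal extremality is wrong as stated. You transport $T_0$ via $\Pi^{-1}$ to $b_+ \otimes b_- \in \mathcal{T}_\ell^+ \otimes \mathcal{T}_0^-$ and claim that the $W$-orbit ``decouples'' as $(S_{i_1} \cdots S_{i_l} b_+) \otimes (S_{i_1} \cdots S_{i_l} b_-)$. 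The tensor product rules do not commute with applying the same word in the $S_i$ factorwise: as soon as the two factors have $i$-weights of opposite sign, the operators distribute differently. Concretely, take $\ell = 1$, so $b_+ = (\cdots < 0 < 1)$ and $b_- = (1 < 2 < \cdots)$. Then $S_1(b_+ \otimes b_-) = (\tilde{f}_1 b_+) \otimes b_- = (\cdots < 0 < 2) \otimes (1 < 2 < \cdots)$. In direction $0$ the two factors now have weights $+1$ and $-1$; one checks with the tensor rules ($\varphi_0 = 1$ on the first factor, $\varepsilon_0 = 1$ on the second) that both $\tilde{e}_0$ and $\tilde{f}_0$ kill this tensor element, so it is $0$-extremal of $0$-weight zero and $S_0$ fixes it, whereas the factorwise prescription $S_0 b'_+ \otimes S_0 b'_- = \tilde{f}_0 b'_+ \otimes \tilde{e}_0 b'_-$ moves both factors to a genuinely different element. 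So the orbit of the tensor is not the diagonal of the two individual orbits, and the decoupling argument, which is the load-bearing step of your condition (ii), fails in the mixed-sign directions that actually occur along the orbit.

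The conclusion is nevertheless true and has a one-line direct proof inside $\mathcal{T}_{[1,\ell]}$, with no need to pass through $\Pi^{-1}$: for a one-column semi-standard tableau $T$ and any $i \in \ZZ$, $\tilde{e}_i \cdot T \neq 0$ forces $i+1 \in T$ and $i \notin T$, while $\tilde{f}_i \cdot T \neq 0$ forces $i \in T$ and $i+1 \notin T$; these conditions are mutually exclusive, so every element of $\mathcal{T}_{[1,\ell]}$ is $i$-extremal for every $i$, and hence $T_0$ (indeed every element) is extremal. Its weight is $\Lambda_\ell - \Lambda_0$ by the telescoping computation you already carried out. With this repair, the rest of your argument — the verification of (\ref{lienrepcryinf}) using thinness and Theorem \ref{thmformactinf} (each $i$-string in $\mathcal{T}_{[1,\ell]}$ has length at most one, so the divided powers $(x_i^\pm)^{(k)}$, $k \geq 2$, act by zero), and the deduction of condition (iii) from Lemma \ref{tpalgv} applied to the $\ell$-highest and $\ell$-lowest weight tensor factors — matches the paper's proof.
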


\begin{proof}
By the fusion product construction, $V(Y_{\ell, a}Y_{0, aq^\ell}^{-1})$ is an integrable $\U_q(\hat{sl}_\infty)$-module. As it is also irreducible, $V(Y_{\ell, a}Y_{0, aq^\ell}^{-1})$ is generated by the vector $T =(1 < 2 < \cdots < \ell)_a$. The extremality of this vector for the horizontal quantum affine subalgebra follows from Lemma \ref{lirepcryinf} and the fact that $T$ is an extremal element in $\mathcal{T}_{[1, \ell]}$. So it remains to prove that for all $w \in V(Y_{\ell, a}Y_{0, aq^\ell}^{-1})$ and $J = [a, b] \subset \ZZ$ finite, $\U_q(\hat{sl}_\infty) \cdot w$ is finite-dimensional: this follows by Lemma \ref{tpalgv}.
\end{proof}

\begin{rem}
Set $n \in \NN^{\ast}, \ell \geq 1$ and $a \in \CC^{\ast}$. Let us consider the \linebreak $\U_q(\hat{sl}_\infty)_{[-n, \ell + n]}$-modules 
$$V(Y_{\ell, a})_{[-n, \ell + n]} = \U_q(\hat{sl}_\infty)_{[-n, \ell + n]} \cdot v^+ \text{ and } V(Y_{0, aq^{\ell}}^{-1})_{[-n, \ell + n]} = \U_q(\hat{sl}_\infty)_{[-n, \ell + n]} \cdot v^-$$
where $v^+$ and $v^-$ are $\ell$-highest weight vector and $\ell$-lowest weight vector of the \linebreak $\U_q(\hat{sl}_{\infty})$-modules $V(Y_{\ell, a})$ and $V(Y_{0, aq^{\ell}}^{-1})$ respectively. Using the result of this section, we deduce that $\Delta$ endows the tensor product $$V(Y_{\ell, a})_{[-n, \ell + n]} \otimes V(Y_{0, aq^{\ell}}^{-1})_{[-n, \ell + n]}$$ with a structure of $\U_q(\hat{sl}_\infty)_{[-n, \ell + n]}$-module. Let us set
$$V(Y_{\ell, a}Y_{0, aq^{\ell}}^{-1})_{[-n, \ell + n]} = \U_q(\hat{sl}_\infty)_{[-n, \ell + n]} \cdot v^+ \otimes v^- \subset V(Y_{\ell, a})_{[-n, \ell + n]} \otimes V(Y_{0, aq^{\ell}}^{-1})_{[-n, \ell + n]}.$$
We give some facts about the $\U_q(\hat{sl}_\infty)_{[-n, \ell + n]}$-module $V(Y_{\ell, a}Y_{0, aq^{\ell}}^{-1})_{[-n, \ell + n]}$ (consequences of the study done in this section): it is irreducible and finite-dimensional. In particular, this is a simple $\ell$-highest weight $\U_q(\hat{sl}_\infty)_{[-n, \ell + n]}$-module. Its $q$--character is
\begin{eqnarray*}
\chi_q(V(Y_{\ell, a}Y_{0, aq^{\ell}}^{-1})_{[-n, \ell + n]}) = \sum_{T \in (\mathcal{T}_\ell)_{[-n, \ell + n]}} m_T
\end{eqnarray*}
where $(\mathcal{T}_\ell)_{[-n, \ell + n]}$ is the finite set of semi-standard tableaux $$T = (-n \leq i_1 < i_2 < \cdots < i_{\ell} \leq \ell + n + 1).$$ The $\ell$-highest weight of $V(Y_{\ell, a}Y_{0, aq^{\ell}}^{-1})_{[-n, \ell + n]}$ corresponds to the dominant monomial in its $q$--character formula. It is obtained for $T = (-n < -n+1 < \cdots < -n+ \ell-1 )$. Hence we have
$$V(Y_{\ell, a}Y_{0, aq^{\ell}}^{-1})_{[-n, \ell + n]} = V(m_T)_{[-n, \ell + n]} = V(Y_{-n+ \ell -1, aq^{-n-1}})_{[-n, \ell + n]}.$$
Then $V(Y_{\ell, a}Y_{0, aq^{\ell}}^{-1})_{[-n, \ell + n]}$ is the fundamental $\U_q(\hat{sl}_\infty)_{[-n, \ell + n]}$-module of $\ell$-highest weight $Y_{-n+ \ell -1, aq^{-n-1}}$.
\end{rem}

\section{Fusion product of extremal fundamental loop weight modules}

In this section we study fusion products of extremal loop weight modules $V(Y_{\ell, a}Y_{0, a q^\ell}^{-1})$ with $\ell \geq 1$ and $a \in \CC^{\ast}$. We obtain new families of extremal loop weight modules with basis labelled by semi-standard tableaux.

In the first part, we determine existence conditions of $\U_q(\hat{sl}_\infty)$-module structure on the tensor product of extremal fundamental loop weight modules $V(Y_{\ell, a}Y_{0, a q^\ell}^{-1})$. In the second part we study the case of fusion products of $k$ extremal fundamental loop weight modules $V(Y_{\ell, a_i}Y_{0, a_i q^\ell}^{-1})$ with generic non-zero complex parameters $a_1, a_2, \cdots , a_k \in \CC^{\ast}$ (Theorem \ref{thmgencaseinf}): it is an extremal loop weight module of $\ell$-weight
$$Y_{\ell, a_1}\cdots Y_{\ell, a_k}Y_{0, a_1q^\ell}^{-1} \cdots Y_{0, a_k q^\ell}^{-1}.$$
In the third part we treat the case of fusion products of extremal fundamental loop weight modules when parameters are chosen non-generic. More precisely we consider the fusion product of vector representations $V(Y_{1,a}Y_{0, aq}^{-1})$ when the set of parameters forms a $q$-segment $\{a, aq^{-2}, \cdots, aq^{-2(k-1)}\}$ ($k \in \NN^{\ast}$, $ a \in \CC^{\ast}$). We recover in that way all the extremal fundamental loop weight modules (Theorem \ref{thmtpvrisomeflwm}). Furthermore we obtain new extremal loop weight modules of $\ell$-weight
$$Y_{\ell, aq^{-\ell+1}}Y_{\ell, aq^{-\ell+3}} \cdots Y_{\ell, aq^{-\ell+1+2k}} Y_{0,aq}^{-1}Y_{0,aq^3}^{-1} \cdots Y_{0,aq^{1+2k}}^{-1} \text{ (Theorem \ref{thmtpelwminf})}.$$

\subsection{Existence conditions}

Set $\ell \geq 1$. Let us consider the tensor product
$$V(Y_{\ell, a}Y_{0, a q^\ell}^{-1}) \otimes V(Y_{\ell, b}Y_{0, b q^\ell}^{-1}) \text{ with } a, b \in \CC^{\ast}.$$
We determine when the action of $\U_q(\hat{sl}_\infty)$ is well-defined on it.

\begin{prop}\label{proptpdefinf}
\begin{itemize}
\item[(i)] The action of $\U_q(\hat{sl}_\infty)$ is well-defined on the tensor product $V(Y_{\ell, a}Y_{0, a q^\ell}^{-1}) \otimes V(Y_{\ell, b}Y_{0, b q^\ell}^{-1})$ if and only if $$\dfrac{a}{b} \notin \{q^{-2\ell+2}, q^{-2\ell+4}, \cdots, q^{2\ell-2} \}.$$
\item[(ii)] Assume that $\dfrac{a}{b} = q^{- 2\ell}$. The module $V(Y_{\ell, a}Y_{0, a q^\ell}^{-1}) \otimes V(Y_{\ell, b}Y_{0, b q^\ell}^{-1})$ has a submodule spanned by vectors of the form 
$$T_a \otimes T'_b \text{ with } i_1 \leq j_\ell$$
where $T = (i_1 < \cdots < i_\ell)$ and $T' = (j_1 < \cdots < j_\ell)$. The submodule and the quotient module are irreducible and thin.
\item[(iii)] Assume that $\dfrac{a}{b} = q^{2\ell}$. The module $V(Y_{\ell, a}Y_{0, a q^\ell}^{-1}) \otimes V(Y_{\ell, b}Y_{0, b q^\ell}^{-1})$ has a submodule spanned by vectors of the form 
$$T_a \otimes T'_b \text{ with } i_\ell < j_1$$
where $T = (i_1 < \cdots < i_\ell)$ and $T' = (j_1 < \cdots < j_\ell)$. The submodule and the quotient module are irreducible and thin.
\item[(iv)] In all the other cases, $V(Y_{\ell, a}Y_{0, a q^\ell}^{-1}) \otimes V(Y_{\ell, b}Y_{0, b q^\ell}^{-1})$ is a thin, irreducible \linebreak $\U_q(\hat{sl}_\infty)$-module.
\end{itemize}
\end{prop}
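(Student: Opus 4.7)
My approach is to compute the Drinfeld coproduct action on basis vectors $T_a \otimes T'_b$ (with $T, T' \in \mathcal{T}_{[1, \ell]}$) using the explicit formulas of Theorem \ref{thmformactinf}, and to read off both the well-definedness condition of part (i) and the finer submodule structure of parts (ii)--(iv) from the location of poles and zeros of the $\psi$-factors that arise.

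For part (i), applying $\Delta(x_i^+(z)) = x_i^+(z) \otimes 1 + \phi_i^-(z) \otimes x_i^+(z)$ to $T_a \otimes T'_b$ reduces to evaluating $\phi_i^-(z) \cdot T_a$ at the supports $z = b^{-1} q^{-(i+\ell+1-2q)}$ of the delta functions arising from $x_i^+(z) \cdot T'_b$. By Theorem \ref{thmformactinf}, $\phi_i^-(z)$ acts by a single $\psi$-factor of the form $\psi(aq^{i+\ell+3-2p}z)^{-1}$ or $\psi(aq^{i+\ell+1-2p}z)$, so the resulting coefficients become $\psi((a/b)q^{2(q-p)+2})^{-1}$ or $\psi((a/b)q^{2(q-p)})$, both of which develop a pole exactly when $a/b = q^{2(p-q)}$. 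As $(p, q)$ ranges over $\{1, \ldots, \ell\}^2$, this gives exactly the forbidden set $\{q^{-2\ell+2}, \ldots, q^{2\ell-2}\}$; the symmetric computation with $\Delta(x_i^-(z))$ produces no new condition. For the converse (necessity), for each forbidden ratio I would exhibit an explicit pair $(T, T')$ in which the $\psi$-factor has a genuine pole and the corresponding delta coefficient is nonzero. Outside the forbidden set, Lemma \ref{lemsubminf} together with Lemma \ref{tpalgv} provides the integrable $\U_q(\hat{sl}_\infty)$-module structure.

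For parts (ii) and (iii), the ratios $a/b = q^{\mp 2\ell}$ sit just outside the forbidden range: the $\psi$-factors are regular but now acquire a zero that gives rise to the submodule. In case (ii), whenever $\Delta(x_i^\pm(z)) \cdot (T_a \otimes T'_b)$ would produce a term violating the defining condition $i_1 \leq j_\ell$, the relevant coefficient picks up a factor $\psi((a/b)q^{2\ell})^{-1} = \psi(1)^{-1} = 0$ or $\psi((a/b)q^{2\ell}) = \psi(1) = \infty$ compensated by a companion zero; the boundary-crossing coefficient vanishes. Case (iii) is symmetric, with $a/b = q^{2\ell}$ and condition $i_\ell < j_1$. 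Thinness of the submodule and of the quotient follows from the monomial realization of Remark \ref{remlienmoncriinf}: the boundary ratio offsets the two groups of spectral parameters sufficiently to prevent collisions $m_T m_{T'} = m_{T''} m_{T'''}$ within each side of the filtration. Irreducibility is then obtained as in Proposition \ref{felwmirredinf}: each $\ell$-weight vector is labelled by a unique admissible pair, and by Lemma \ref{lirepcryinf} together with connectedness of the relevant sub-$\U_q(sl_\infty)$-crystal, the horizontal quantum affine subalgebra alone generates the whole sub or quotient module.

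For part (iv), the generic case, every $\psi$-factor that arises is both regular and non-vanishing, so no forced invariant subspace appears. Thinness reduces to the combinatorial claim that distinct pairs $(T, T') \in \mathcal{T}_{[1,\ell]}^2$ produce distinct product monomials $m_T \cdot m_{T'}$ whenever $a/b$ avoids the full extended set $\{q^{-2\ell}, q^{-2\ell+2}, \ldots, q^{2\ell}\}$, which amounts to recovering each pair from the multiset of its $\ffbox{k}_c$ factors under that hypothesis; irreducibility then follows by the same crystal-theoretic argument. The main obstacles I anticipate are (a) in parts (ii)--(iii), verifying that the vanishing of the boundary coefficients aligns exactly with the proposed inequality and with no other invariant subspace, and (b) the no-collision statement for product monomials in the generic regime, both of which require careful bookkeeping of the spectral parameters $aq^{\ell+1-2p}$ and $bq^{\ell+1-2q}$.
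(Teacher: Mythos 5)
Your treatment of part (i) is essentially the paper's own proof: the paper computes $x_i^-(z)\cdot(T_a\otimes T'_b)$ via the Drinfeld coproduct, reads off the coefficients $\psi\bigl(\tfrac{b}{a}q^{2(r-s)+2}\bigr)^{-1}$ and $\psi\bigl(\tfrac{b}{a}q^{2(r-s)}\bigr)$ with $1\leq r,s\leq\ell$, and concludes that well-definedness holds iff $\tfrac{a}{b}\neq q^{2(r-s)}$, i.e.\ $\tfrac{a}{b}\notin\{q^{-2\ell+2},\dots,q^{2\ell-2}\}$, then invokes Lemma \ref{lemsubminf}; your pole analysis is the same computation done on $x_i^+(z)$. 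But in parts (ii)--(iii) you have a local error: at $\tfrac{a}{b}=q^{\mp 2\ell}$ \emph{no pole occurs at all} (these ratios lie outside the forbidden set of (i)), and since each fundamental factor is thin, every coefficient is a single $\psi$-value, so there is no ``$\psi(1)=\infty$ compensated by a companion zero'' mechanism available. The two boundary coefficients in case (ii) are $\psi(1)^{-1}=0$ (for $x_{i-1}^+(z)$, as you correctly state) and $\psi\bigl(\tfrac{b}{a}q^{2(1-\ell)}\bigr)=\psi(q^{2\ell}q^{2-2\ell})=\psi(q^2)=0$ (for $x_i^-(z)$ with $r=1$, $s=\ell$), both genuine zeros; the paper's proof of (ii) consists precisely of these two evaluations.

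The more serious gap is your irreducibility argument. Lemma \ref{lirepcryinf} requires $(x_i^{\pm})^{(k)}\cdot v_b=v_{\tilde e_i^k\cdot b}$ for a basis indexed by a crystal, but on the tensor product the zero modes act by $x_{i,0}^-\cdot(T_a\otimes T'_b)=c_1\,(\tilde f_i\cdot T)_a\otimes T'_b+c_2\,T_a\otimes(\tilde f_i\cdot T')_b$ with in general \emph{both} $c_1,c_2\neq 0$: this is not the crystal tensor-product action, so the lemma's hypothesis fails. Relatedly, the proof of Proposition \ref{felwmirredinf} hinged on the $P$-weight spaces of $V(Y_{\ell,a}Y_{0,aq^{\ell}}^{-1})$ being one-dimensional; in the tensor product they are not, and the module should not be expected to be cyclic from every vector over the horizontal subalgebra alone --- the proposition claims irreducibility only over the full $\U_q(\hat{sl}_\infty)$. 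The correct mechanism, which is what the paper's closing remark (``the thin property and the irreducibility \dots follow by straightforward computations'') amounts to, uses the loop structure: by thinness any nonzero submodule contains a basis vector $T_a\otimes T'_b$, and the delta supports produced by the first factor (at $aq^{i+\ell+1-2r}$) and the second (at $bq^{i+\ell+1-2s}$) coincide only when $\tfrac{a}{b}=q^{2(r-s)}$, which is excluded in every allowed case; hence suitable Fourier modes $x_{i,r}^{\pm}$ isolate each tensor factor and move it independently, with all relevant $\psi$-coefficients nonzero except exactly at the boundary pairs of (ii)--(iii) --- which is also exactly why the invariant subspace there is the one asserted and no other. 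As written, your proposal is missing this step, and the argument you substitute for it is invalid.
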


\begin{rem}
This result generalizes the one for the (specialized) vector representations of $\U_q(sl_{n+1}^{tor})$ $(n \geq 2)$ given in \cite{feigin_representations_2013, mansuy_extremal_2013}.
\end{rem}

\begin{proof}
Let $T, T' \in \mathcal{T}_{[1, \ell]}$ and $a, b \in \CC^{\ast}$. We determine the action of $x_i^-(z)$ on \linebreak $T_a \otimes T'_b \in V(Y_{\ell, a}Y_{0, a q^\ell}^{-1}) \otimes V(Y_{\ell, b}Y_{0, b q^\ell}^{-1})$: it is equal to
\begin{eqnarray*}
\begin{array}{c}
\displaystyle \sum_{1 \leq s \leq \ell} \delta_{\{j_s = i \}} \delta(bq^{i + \ell + 1 - 2s}) \cdot T_a \otimes (\tilde{f}_i \cdot T')_b + \displaystyle \sum_{1 \leq r, s \leq \ell} \delta_{\{i_r = i \}} \delta(aq^{i + \ell + 1 - 2r}) \\
\times  \left( 1 + \delta_{\{j_s = i + 1 \}} \delta_{\{j_{s-1} \neq i \}} \left( \psi(\frac{b}{a} q^{2(r-s)+2})^{-1} - 1 \right) + \delta_{\{j_s = i \}} \delta_{\{j_{s+1} \neq i+1 \}} \left( \psi(\frac{b}{a} q^{2(r-s)}) - 1 \right) \right) \\
\times (\tilde{f}_i \cdot T)_a \otimes T'_b.
\end{array}
\end{eqnarray*}
So the action of the operators $x_i^-(z)$ is well-defined on $ V(Y_{\ell, a}Y_{0, a q^\ell}^{-1}) \otimes V(Y_{\ell, b}Y_{0, b q^\ell}^{-1})$ if and only if
$$\dfrac{a}{b} \neq q^{2(r-s)} \text{ for all } 1 \leq r, s \leq \ell$$
or in an equivalent way, $\dfrac{a}{b} \notin \{q^{-2\ell+2}, q^{-2\ell+4}, \cdots, q^{2\ell-2} \}$. We proceed in the same way for the $x_i^+(z)$. We obtain the first assertion by Lemma \ref{lemsubminf}.

Now assume that $\dfrac{a}{b} = q^{-2\ell}$. Let us consider the subvector space $V$ of $$ V(Y_{\ell, a}Y_{0, a q^\ell}^{-1}) \otimes V(Y_{\ell, b}Y_{0, b q^\ell}^{-1})$$ spanned by vectors of the form 
$$T_a \otimes T'_b \text{ with } i_1 \leq j_\ell$$
where $T = (i_1 < \cdots < i_\ell)$ and $T' = (j_1 < \cdots < j_\ell)$. Let $i \in \ZZ$ and $T = (i < i_2 < \cdots < i_{\ell})$, $T' = (j_1 < \cdots < j_{\ell-1} < i)$ be semi-standard tableaux. We have
\begin{align*}
x_i^-(z) \cdot T_a \otimes T'_b &= \delta(aq^{i + \ell + 1}z) T_a \otimes (\tilde{f}_i \cdot T')_b + \delta(aq^{i+\ell-1}z) \psi(q^{2}) (\tilde{f}_i \cdot T)_a \otimes T'_b\\
& = \delta(aq^{i + \ell + 1}) T_a \otimes (\tilde{f}_i \cdot T')_b,\\
x_{i-1}^+(z) \cdot T_a \otimes T'_b &= \delta(aq^{i+\ell-1}z) (\tilde{e}_{i-1} \cdot T)_a \otimes T'_{b} + \delta(aq^{i+\ell+1}z) \psi (1)^{-1} T_a \otimes (\tilde{e}_{i-1} \cdot T')_{aq^{2\ell}}\\
&= \delta(aq^{i+\ell-1}z) (\tilde{e}_{i-1} \cdot T)_a \otimes T'_{b}.
\end{align*}
By these computations, $V$ is a sub-$\U_q(\hat{sl}_\infty)$-module of $ V(Y_{\ell, a}Y_{0, a q^\ell}^{-1}) \otimes V(Y_{\ell, b}Y_{0, b q^\ell}^{-1})$. We proceed in the same way for the third point. The thin property and the irreducibility of the modules in $(ii)$, $(iii)$ and $(iv)$ follow by straightforward computations.
\end{proof}

\subsection{The generic case}

Let us give a first result about the fusion product of extremal fundamental loop weight modules when the non-zero complex parameters are chosen generics.

\begin{thm}\label{thmgencaseinf}
Let $\ell \geq 1$, $k \in \NN^{\ast}$ and $a_1, \cdots, a_k \in \CC^{\ast}$ be such that $\frac{a_i}{a_j} \neq 1$ and $\frac{a_i}{a_j} \neq q^{\pm 2}$ for all $i < j$. Then the fusion product of extremal fundamental loop weight modules $$V = V(Y_{\ell, a_1}Y_{0, a_1 q^\ell}^{-1}) \otimes V(Y_{\ell, a_2}Y_{0, a_2 q^\ell}^{-1}) \otimes \cdots \otimes V(Y_{\ell, a_k}Y_{0, a_k q^\ell}^{-1})$$ is an irreducible extremal loop weight module of $\U_q(\hat{sl}_\infty)$ of $\ell$-weight $$Y_{\ell, a_1}\cdots Y_{\ell, a_k}Y_{0, a_1q^\ell}^{-1} \cdots Y_{0, a_k q^\ell}^{-1}.$$
\end{thm}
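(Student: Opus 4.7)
The plan is to verify the four defining conditions of an extremal loop weight module (Definition \ref{defelminf}) for $V$, together with irreducibility, mirroring the proof of Theorem \ref{thmelwminf}. First I would iterate Proposition \ref{proptpdefinf}(i): the coefficients appearing in the Drinfeld coproduct action on tensor products of basis vectors involve $\psi$ and $\psi^{-1}$ evaluated at monomials in the ratios $a_i/a_j$, with singularities exactly at the forbidden ratios. The generic hypothesis rules these out, and Lemma \ref{lemsubminf} then produces a $\U_q(\hat{sl}_\infty)$-module structure on $V$. Each factor $V(Y_{\ell, a_i} Y_{0, a_i q^\ell}^{-1})$ is integrable and satisfies condition (iii) of Definition \ref{defelminf} by Theorem \ref{thmelwminf}, so an iteration of Lemma \ref{tpalgv} propagates both properties to $V$.

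Next I would identify the generator $v_0 = (T_0)_{a_1} \otimes \cdots \otimes (T_0)_{a_k}$, where $T_0 = (1 < 2 < \cdots < \ell) \in \mathcal{T}_{[1,\ell]}$. By \eqref{deltacart}, $v_0$ is a joint $\U_q(\hat{\Hlie})$-eigenvector whose $\ell$-weight is the product of the factor $\ell$-weights, which is the monomial claimed in the statement. Extremality of $v_0$ for $\U_q^h(\hat{sl}_\infty)$ follows from Lemma \ref{lirepcryinf}: by Theorem \ref{thmformactinf} and the coproduct formulas, the basis of $V$ obtained by tensoring basis vectors from each factor is compatible with the tensor product $\U_q(sl_\infty)$-crystal structure on $\mathcal{T}_{[1,\ell]}^k$, and the vertex $(T_0, \ldots, T_0)$ is an extremal element of weight $k(\Lambda_\ell - \Lambda_0)$.

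The main obstacle is the irreducibility of $V$, which I would split into two claims: (a) $V$ is thin; (b) any nonzero vector generates all of $V$. For (a), since each factor is thin (Theorem \ref{thmformactinf}) and an $\ell$-weight of $V$ factors via \eqref{deltacart} as a product of factor $\ell$-weights, thinness of $V$ reduces to the fact that under the generic hypothesis distinct tuples of factor monomials give distinct products in the monomial group $A$. For (b), any nonzero submodule $W \subseteq V$ must contain some basis vector $(T^{(1)})_{a_1} \otimes \cdots \otimes (T^{(k)})_{a_k}$ (extracted from a general element of $W$ using the separation of $\ell$-weights by $\U_q(\hat{\Hlie})$, which holds on a thin integrable module). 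Using the explicit coproduct formulas, each Kashiwara move in $\mathcal{T}_{[1,\ell]}^k$ can be lifted to a genuinely nonzero action of some $x_i^{\pm}$; since the tensor product crystal is connected, one transports any basis vector to $v_0$, giving $W = V$. The delicate point, where the generic hypothesis is essential, is to verify that no coefficient in the Drinfeld coproduct action vanishes along such a chain of lifts.

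Once irreducibility is established, $V = \U_q(\hat{sl}_\infty) \cdot v_0$ is simple, integrable, generated by the extremal vector $v_0$ of the claimed $\ell$-weight, and satisfies the local finiteness condition, so $V$ is an extremal loop weight module of $\U_q(\hat{sl}_\infty)$.
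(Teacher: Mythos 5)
Your overall skeleton tracks the paper closely: well-definedness via Proposition \ref{proptpdefinf}(i) and Lemma \ref{lemsubminf}, integrability and condition (iii) of Definition \ref{defelminf} via Lemma \ref{tpalgv}, the $\ell$-weight of $v_0$ from (\ref{deltacart}), and irreducibility essentially as the paper's citation of Proposition \ref{proptpdefinf} (your thinness-plus-connectivity sketch is in fact more detailed than the paper's appeal to the ``straightforward computations'' behind part (iv)). The genuine gap is in the extremality step. Lemma \ref{lirepcryinf} requires the exact crystal compatibility (\ref{lienrepcryinf}) on the \emph{whole} basis, and this is false for the Drinfeld-coproduct action at generic parameters: the action mixes tensor factors with coefficients of the form $\psi\bigl(\tfrac{a_j}{a_i}q^{2m}\bigr)^{\pm 1}$ which are generically neither $0$ nor $1$. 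Concretely, already for $\ell = 1$, $k = 2$, one computes $x_{i,0}^{-} \cdot \bigl( \ffbox{i}_{a_1} \otimes \ffbox{i+1}_{a_2} \bigr) = \psi\bigl(\tfrac{a_2}{a_1} q^{2}\bigr)^{-1}\, \ffbox{i+1}_{a_1} \otimes \ffbox{i+1}_{a_2} \neq 0$ for generic $a_2/a_1$, whereas the tensor-product crystal rule (in the convention the paper uses in proving that $\Pi$ is a crystal isomorphism) gives $\tilde{f}_i \cdot \bigl(\ffbox{i} \otimes \ffbox{i+1}\bigr) = \ffbox{i} \otimes \tilde{f}_i \cdot \ffbox{i+1} = 0$. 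The crystal-like behaviour you invoke emerges precisely at the \emph{non-generic} ratios of Section 4.3, e.g.\ $a_2/a_1 = q^{-2}$ where the coefficient degenerates to $\psi(1)^{-1} = 0$ as in (\ref{eqinterinf2}); no rescaling of basis vectors can repair (\ref{lienrepcryinf}) when the module action is nonzero where the crystal operator vanishes.

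The repair is exactly what the paper does: extremality need only be verified along the Weyl orbit of $v_0$, where all $k$ tensor factors carry the \emph{same} tableau $T$. There the cross terms conspire so that $x_{i,0}^{\pm} \cdot T_{a_1} \otimes \cdots \otimes T_{a_k} = 0$ when $\pm\wt(T)(h_i) \geq 0$ and the divided powers satisfy $(x_{i,0}^{-})^{(k)} \cdot T_{a_1} \otimes \cdots \otimes T_{a_k} = T'_{a_1} \otimes \cdots \otimes T'_{a_k}$ with $T' = \tilde{f}_i \cdot T$ (an explicit computation the paper imports from the proof of Theorem 5.3 of \cite{mansuy_extremal_2013}); this is precisely the hypothesis of the unnamed lemma following Theorem \ref{thmtpelwminf}, which demands compatibility only on $W \cdot b^e$ rather than on all of $\mathcal{B}$. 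Substituting that orbit-restricted lemma and computation for Lemma \ref{lirepcryinf} makes your argument sound. One further caveat you inherit from the statement itself: the hypothesis $a_i/a_j \neq 1, q^{\pm 2}$ covers the singular set of Proposition \ref{proptpdefinf} only when $\ell = 1$; for $\ell \geq 2$ your step ``the generic hypothesis rules these out'' actually needs $a_i/a_j \notin \{q^{-2\ell}, q^{-2\ell+2}, \ldots, q^{2\ell}\}$, so you should flag that the genericity assumption must be read accordingly.
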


\begin{proof}
As a direct consequence of Proposition \ref{proptpdefinf}, the $\U_q(\hat{sl}_\infty)$-module $V$ is irreducible. Furthermore as $V(Y_{\ell, a_j}Y_{0, a_j q^\ell})$ are extremal loop weight modules for all $1 \leq j \leq k$, there fusion product $V$ satisfies the third condition of Definition \ref{defelminf}. Eventually for $T \in \mathcal{T}_{[1, \ell]}$, set $T' = \tilde{f}_i \cdot T$ and $T'' = \tilde{e}_i \cdot T$. We have for all $i \in \ZZ$ (see the proof of \cite[Theorem 5.3]{mansuy_extremal_2013})
$$(x_{i,0}^{-})^{(k)} \cdot T_{a_1} \otimes \cdots \otimes T_{a_k} = T'_{a_1} \otimes \cdots \otimes T'_{a_k}$$
and
$$(x_{i,0}^{+})^{(k)} \cdot T_{a_1} \otimes \cdots \otimes T_{a_k} = T''_{a_1} \otimes \cdots \otimes T''_{a_k}.$$
The extremality of $(1 < \cdots < \ell)_{a_1} \otimes \cdots \otimes (1 < \cdots < \ell)_{a_k}$ follows.
\end{proof}

\subsection{The non-generic case}

Let us consider the extremal fundamental loop weight module $V(Y_{1,a}Y_{0,aq}^{-1})$ ($a \in \CC^{\ast}$) of $\U_q(\hat{sl}_\infty)$. This module has a basis labelled by the tableaux
$$T_a = \ffbox{j}_a \text{ with } j \in \ZZ.$$
Furthermore the action of $\U_q(\hat{sl}_\infty)$ is known, given by
\begin{eqnarray*}
x_{i}^{+}(z) \cdot \ffbox{j}_a &=& \delta_{i, j-1} \delta(a q^{j-1}z) \cdot \ffbox{j-1}_a,\\
x_{i}^{-}(z) \cdot \ffbox{j}_a &=& \delta_{i, j} \delta(a q^{j}z) \cdot \ffbox{j+1}_a,\\
\phi_{i}^{\pm}(z) \cdot \ffbox{j}_a &=& \begin{cases} \psi(aq^{j}z) \cdot \ffbox{j}_a & \text{ if } i = j,\\
\psi(aq^{j+1}z)^{-1}\cdot \ffbox{j}_a & \text{ if } i = j-1,\\
\ffbox{j}_a & \text{ otherwise.}
\end{cases}
\end{eqnarray*}

We denote this module $V(\ffbox{1}_a)$ in the following. It is defined in \cite{feigin_representations_2013, mansuy_quantum_2012, mansuy_extremal_2013} with different methods for the quantum toroidal algebra $\U_q(sl_{n+1}^{tor})$ ($n\geq 2$) and called (specialized) \textit{vector representation}.

For all $k \in \NN^{\ast}$ and $ a \in \CC^{\ast}$, we consider the tensor product of vector representations
$$V(\ffbox{1}_a) \otimes V(\ffbox{1}_{aq^{-2}}) \otimes \cdots \otimes V(\ffbox{1}_{aq^{-2(k-1)}}).$$
By Proposition \ref{proptpdefinf}, the coproduct $\Delta$ endows this tensor product with a structure of $\U_q(\hat{sl}_\infty)$-module. Let us give a first result about it (the proof is exactly the same as \cite[Theorem 4.5]{mansuy_extremal_2013} and is not recalled).

\begin{thm}\label{elwmtpinf}
The $\U_q(\hat{sl}_\infty)$-module $V(\ffbox{1}_a) \otimes \cdots \otimes V(\ffbox{1}_{aq^{-2(k-1)}})$ is an extremal loop weight module of $\ell$-weight $$Y_{1,a}Y_{1,aq^{-2}} \cdots Y_{1, aq^{-2(k-1)}}Y_{0,aq}^{-1}Y_{0,aq^{-1}}^{-1} \cdots Y_{0, aq^{-2(k-1)+1}}^{-1}.$$
\end{thm}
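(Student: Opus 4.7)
The plan is to verify the three conditions of Definition \ref{defelminf} for the tensor product $V = V(\ffbox{1}_a) \otimes \cdots \otimes V(\ffbox{1}_{aq^{-2(k-1)}})$, with candidate extremal vector
$$v = \ffbox{1}_a \otimes \ffbox{1}_{aq^{-2}} \otimes \cdots \otimes \ffbox{1}_{aq^{-2(k-1)}}.$$
The $\ell$-weight of $v$ is computed directly from the explicit $\phi_i^\pm$-action on each factor (a single-entry case of Theorem \ref{thmformactinf}) together with the multiplicativity (\ref{deltacart}) of the Drinfeld coproduct on the Cartan series. This gives exactly the monomial
$$\prod_{i=0}^{k-1} Y_{1, aq^{-2i}} Y_{0, aq^{-2i+1}}^{-1}$$
claimed in the statement.

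Well-definedness of the $\U_q(\hat{sl}_\infty)$-module structure on $V$ is ensured by Proposition \ref{proptpdefinf}(i): for $\ell = 1$ the only forbidden parameter ratio reduces to $a/b = 1$, and the parameters $aq^{-2i}$ are pairwise distinct. The conclusion extends to the full $k$-fold fusion product by iteration of Lemma \ref{lemsubminf}, checking pairwise compatibility on each pair of factors. Condition (iii) of Definition \ref{defelminf} then follows from Lemma \ref{tpalgv} applied inductively, since each vector representation $V(\ffbox{1}_{aq^{-2i}})$ is itself an extremal loop weight module by Theorem \ref{thmelwminf}.

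The core step is condition (ii), the extremality of $v$ for the horizontal subalgebra $\U_q^h(\hat{sl}_\infty)$. The key structural observation is the dichotomy $x_{i,0}^+ \cdot \ffbox{1} = 0$ for $i \neq 0$ and $x_{i,0}^- \cdot \ffbox{1} = 0$ for $i \neq 1$: together with the Drinfeld coproduct formulas (\ref{deltaxplus})--(\ref{deltaxmoins}), this shows that $v$ is automatically $i$-extremal for $i \notin \{0, 1\}$. For the remaining two cases, a direct computation with divided powers, using the specific values $\psi(q^{-2(l-j)})$ arising at the $q$-segment specialization, shows that each Kashiwara-type operator $S_i$ applied to a tensor of constant tableau content $\ffbox{j}_{a_1} \otimes \cdots \otimes \ffbox{j}_{a_k}$ produces a scalar multiple of $\ffbox{j'}_{a_1} \otimes \cdots \otimes \ffbox{j'}_{a_k}$, where $\ffbox{j'}$ is the corresponding crystal image of $\ffbox{j}$. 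In particular, iterating the $S_i$'s keeps $W \cdot v$ inside the span of constant-content tensors, each of which is $i$-extremal for every $i$ by the same dichotomy; extremality of $v$ follows.

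The principal obstacle is condition (i), the cyclicity $\U_q(\hat{sl}_\infty) \cdot v = V$. Here one exploits that the series $x_i^-(z)$ acts on $v$ as a sum over $j = 1, \ldots, k$ of terms with distinct delta-factors $\delta(a_j q z)$ coming from the different tensor positions; extracting coefficients at varying powers of $z$ via a Vandermonde-type argument then isolates each individual summand $\ffbox{1}_{a_1} \otimes \cdots \otimes \ffbox{2}_{a_j} \otimes \cdots \otimes \ffbox{1}_{a_k}$. Iterating this procedure with suitable products of $\U_q(\hat{sl}_\infty)$-operators, one reaches every basis element $\ffbox{j_1}_{a_1} \otimes \cdots \otimes \ffbox{j_k}_{a_k}$ of the tensor product. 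This combinatorial reachability is precisely the content of the proof of \cite[Theorem 4.5]{mansuy_extremal_2013} in the toroidal setting, and the argument transports verbatim to the present situation since the relevant computations only involve the interactions of a single pair of consecutive factors at a time.
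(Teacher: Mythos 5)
Your proposal is correct and follows essentially the same route as the paper, whose entire proof of Theorem \ref{elwmtpinf} consists of deferring to \cite[Theorem 4.5]{mansuy_extremal_2013} ("the proof is exactly the same \dots and is not recalled"): your verification of the three conditions of Definition \ref{defelminf} --- well-definedness via Proposition \ref{proptpdefinf}(i) with pairwise ratios $q^{-2m}\neq 1$, condition (iii) via Lemma \ref{tpalgv}, extremality via divided powers acting on constant-content tensors, and cyclicity via the distinct delta-supports $\delta(a_jqz)$ --- is precisely a faithful reconstruction of that cited argument, which you moreover invoke explicitly at the end.
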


Denote by $V \left(\begin{array}{c} \ffbox{1}_{\ } \\ \vdots_{\ } \\ \ffbox{k}_{a} \end{array} \right)$ the subvector space of $V(\ffbox{1}_a) \otimes \cdots \otimes V(\ffbox{1}_{aq^{-2(k-1)}})$ generated by $$\ffbox{i_1}_a \otimes \ffbox{i_2}_{aq^{-2}} \otimes \cdots \otimes \ffbox{i_k}_{aq^{-2(k-1)}} \text{ with } i_1 < i_2 < \cdots < i_k.$$

\begin{prop}
The coproduct $\Delta$ endows the vector space $V \left(\begin{array}{c} \ffbox{1}_{\ } \\ \vdots_{\ } \\ \ffbox{k}_{a} \end{array} \right)$ with a structure of thin $\U_q(\hat{sl}_\infty)$-module.
\end{prop}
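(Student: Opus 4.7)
Since $\Delta^{(k-1)}(\phi_i^\pm(z))$ acts diagonally on each pure tensor by (\ref{deltacart}), the Cartan currents stabilize the spanning set of strictly increasing tensors, so stability reduces to checking $\Delta^{(k-1)}(x_i^\pm(z))$ for every $i\in\ZZ$. By induction from (\ref{deltaxplus})--(\ref{deltaxmoins}),
\begin{align*}
\Delta^{(k-1)}(x_i^-(z)) &= \sum_{p=1}^{k} 1^{\otimes (p-1)} \otimes x_i^-(z) \otimes \phi_i^+(z)^{\otimes (k-p)},\\
\Delta^{(k-1)}(x_i^+(z)) &= \sum_{p=1}^{k} \phi_i^-(z)^{\otimes (p-1)} \otimes x_i^+(z) \otimes 1^{\otimes (k-p)}.
\end{align*}

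Apply the first to a basis vector $v = \ffbox{i_1}_a \otimes \cdots \otimes \ffbox{i_k}_{aq^{-2(k-1)}}$ with $i_1 < \cdots < i_k$. The explicit action on the vector representation shows that $x_i^-(z)\cdot\ffbox{i_p}_{aq^{-2(p-1)}}$ vanishes unless $i_p = i$, and strict monotonicity forces at most one such index $p_0$. The unique surviving summand is
\[
\delta\bigl(aq^{i-2(p_0-1)}z\bigr)\cdot\ffbox{i_1}_a\otimes\cdots\otimes\ffbox{i+1}_{aq^{-2(p_0-1)}}\otimes\bigotimes_{p>p_0}\bigl(\phi_i^+(z)\cdot\ffbox{i_p}_{aq^{-2(p-1)}}\bigr).
\]
For $p > p_0+1$ the inequality $i_p > i_{p_0+1} \geq i+1$ forces $\phi_i^+(z)$ to act trivially, so the only obstruction to strict monotonicity of the output is the collision $i_{p_0+1}=i+1$. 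In that case the Cartan factor at position $p_0+1$ is $\psi\bigl(aq^{-2p_0+i+2}z\bigr)^{-1}\ffbox{i+1}_{aq^{-2p_0}}$; substituting the support $z = a^{-1}q^{2(p_0-1)-i}$ of the delta makes the argument equal to $q^{0}=1$, and $\psi(1)^{-1}=(1-1)/(q-q^{-1})=0$, so the offending term vanishes and the output lies in the subspace. The argument for $\Delta^{(k-1)}(x_i^+(z))$ is dual: now $i_{p_0}=i+1$, a collision arises only when $i_{p_0-1}=i$, and the Cartan factor $\phi_i^-(z)$ at position $p_0-1$ evaluated on the delta's support yields argument $q^2$, with $\psi(q^2)=0$ killing the term. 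Lemma \ref{lemsubminf} then furnishes the $\U_q(\hat{sl}_\infty)$-module structure.

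Thinness is then immediate. By (\ref{deltacart}) each strictly increasing basis tensor is a simultaneous eigenvector for all $\Delta^{(k-1)}(\phi_i^\pm(z))$ with $\ell$-weight equal to the product $\prod_{p=1}^{k} m_{\ffbox{i_p}_{aq^{-2(p-1)}}} = \prod_p Y_{i_p,\,aq^{-2(p-1)+i_p}}Y_{i_p-1,\,aq^{-2(p-1)+i_p}}^{-1}$. Since the spectral parameters $aq^{-2(p-1)}$ are pairwise distinct, this monomial uniquely determines the tuple $(i_1,\ldots,i_k)$, so distinct basis vectors carry distinct $\ell$-weights and the joint Cartan spectrum is simple.

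The main obstacle is the arithmetic bookkeeping that identifies the collision coefficient as precisely the vanishing $\psi(1)^{-1}$ or $\psi(q^2)$: one must push the parameters $aq^{-2(p-1)}$ through the iterated Drinfeld coproduct and match them against the support of the delta-function. This is the same mechanism that underlies the two-factor submodule in case (iii) of Proposition \ref{proptpdefinf}, now propagated coherently through all $k$ consecutive factors of the fusion product.
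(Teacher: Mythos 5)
Your proposal is correct and takes essentially the same route as the paper: the paper's proof likewise reduces stability of the strictly-increasing span to the vanishing collision coefficients $\psi(q^{2})=0$ and $\psi(1)^{-1}=0$ evaluated on the delta's support (its two-factor identities (\ref{eqinterinf2}), with well-definedness drawn from Proposition \ref{proptpdefinf}) and settles thinness by observing that the weight determines the sequence $i_1<\cdots<i_k$, so your contribution is merely to unroll the iterated Drinfeld coproduct explicitly through all $k$ factors. One cosmetic point: with the paper's convention $\ffbox{j}_a = Y_{j-1,aq^{j}}^{-1}Y_{j,aq^{j-1}}$ the positive factor of your $\ell$-weight monomial should carry $aq^{i_p-1-2(p-1)}$ rather than $aq^{i_p-2(p-1)}$, and since adjacent factors can cancel (e.g.\ $\ffbox{1}_a\otimes\ffbox{2}_{aq^{-2}}$ has $\ell$-weight $Y_{0,aq}^{-1}Y_{2,aq^{-1}}$) the claim that the reduced monomial determines the tuple deserves a brief extra word, though neither remark affects the validity of the argument.
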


\begin{proof}
The action of the $x_i^{\pm}(z)$ is well-defined on it by Proposition \ref{proptpdefinf}. It remains to show that for a vector $$v = \ffbox{i_1}_a \otimes \ffbox{i_2}_{aq^{-2}} \otimes \cdots \otimes \ffbox{i_k}_{aq^{-2(k-1)}} \text{ with } i_1 < i_2 < \cdots < i_k,$$
$x_i^{\pm}(z) \cdot v \in V$ for all $i \in \ZZ$. This is a consequence of the equalities
\begin{eqnarray}\label{eqinterinf2}
\begin{array}{ccc}
x_{i}^{+}(z) \cdot \ffbox{i}_{a} \otimes \ffbox{i+1}_{aq^{-2}} &=& \delta(a q^{i-2}z) \psi(q^{2}) \cdot \ffbox{i}_{a} \otimes \ffbox{i}_{aq^{-2}} = 0,\\
x_{i}^{-}(z) \cdot \ffbox{i}_{a} \otimes \ffbox{i+1}_{aq^{-2}} &=& \delta(a q^{i}z) \psi(1)^{-1} \cdot \ffbox{i+1}_{a} \otimes \ffbox{i+1}_{aq^{-2}} = 0.
\end{array}
\end{eqnarray}

\noindent Eventually these modules are thin, the weight of the vector $\ffbox{i_1}_a \otimes \cdots \otimes \ffbox{i_k}_{aq^{-2(k-1)}}$ being completely determined by the sequence $i_1 < i_2 < \cdots < i_k$.
\end{proof}

\begin{thm}\label{thmtpvrisomeflwm}
The $\U_q(\hat{sl}_\infty)$-module $V = V \left(\begin{array}{c} \ffbox{1} \\ \vdots \\ \ffbox{\ell} \end{array}_a \right)$ ($\ell \geq 1, a \in \CC^{\ast}$) is isomorphic to the extremal fundamental loop weight module $V(Y_{\ell, aq^{-\ell+1}}Y_{0,aq}^{-1})$.
\end{thm}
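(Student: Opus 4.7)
The natural candidate is the linear map $\Phi : V \to V(Y_{\ell, aq^{-\ell+1}} Y_{0, aq}^{-1})$ defined on the basis by
$$\Phi\bigl(\ffbox{i_1}_a \otimes \ffbox{i_2}_{aq^{-2}} \otimes \cdots \otimes \ffbox{i_\ell}_{aq^{-2(\ell-1)}}\bigr) = T_{aq^{-\ell+1}},\qquad T = (i_1 < i_2 < \cdots < i_\ell) \in \mathcal{T}_{[1,\ell]}.$$
This is clearly a linear bijection onto the basis of the right-hand side, so the content of the theorem is that $\Phi$ intertwines the $\U_q(\hat{sl}_\infty)$-actions.

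A quick check using (\ref{deltacart}) shows that the $\ell$-weight of the tensor basis vector on the left is the telescoping product $\prod_{j=1}^{\ell} \ffbox{i_j}_{aq^{-2(j-1)}}$, which coincides with the $\ell$-weight $m_T$ of $T_{aq^{-\ell+1}}$: plugging $a' = aq^{-\ell+1}$ into $m_T = \prod_j \ffbox{i_j}_{a'q^{\ell+1-2j}}$ yields exactly $aq^{-2(j-1)}$ for the spectral parameter. Since both $V$ and $V(Y_{\ell, aq^{-\ell+1}} Y_{0, aq}^{-1})$ are thin, the action of each $\phi_i^\pm(z)$ is scalar on basis vectors and entirely determined by these $\ell$-weights; the matching above therefore immediately gives matching of the Cartan action. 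It remains to show that the Drinfeld generators $x_i^\pm(z)$ act in the same way.

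For $x_i^-(z)$ I would iterate (\ref{deltaxmoins}) to obtain
$$\Delta^{(\ell)}(x_i^-(z)) = \sum_{p=1}^{\ell} 1^{\otimes(p-1)} \otimes x_i^-(z) \otimes \phi_i^+(z)^{\otimes(\ell-p)},$$
and evaluate on $\ffbox{i_1}_a \otimes \cdots \otimes \ffbox{i_\ell}_{aq^{-2(\ell-1)}}$. Since the $i_j$ are strictly increasing, only the position $p$ with $i_p = i$ contributes; the operator $x_i^-(z)$ in that position produces a factor $\delta(aq^{i-2(p-1)}z)$ and replaces $\ffbox{i}$ by $\ffbox{i+1}$. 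The trailing $\phi_i^+(z)$ factors on positions $q>p$ are then specialized at $z = a^{-1}q^{-i+2(p-1)}$. Using the explicit formulas for $\phi_i^+$ on $\ffbox{i_q}_{aq^{-2(q-1)}}$ and the constraint $i_q > i$, each such factor is identity unless $i_q = i+1$; the only way this can occur is $q = p+1$, in which case the factor equals $\psi(1)^{-1} = 0$ and the term vanishes — precisely matching $\tilde{f}_i T = 0$ in $\mathcal{T}_{[1,\ell]}$. Otherwise every trailing $\phi_i^+$ factor is $1$, and the surviving term matches the formula (\ref{formactinf}) for $x_i^-(z) \cdot T_{aq^{-\ell+1}}$, since $a'q^{i+\ell+1-2p} = aq^{i-2(p-1)}$. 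The computation for $x_i^+(z)$ is symmetric, using (\ref{deltaxplus}) and the vanishing $\psi(q^2) = 0$ to kill the term exactly when $i_{p-1} = i$, matching $\tilde{e}_i T = 0$.

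The main obstacle is the bookkeeping for the iterated Drinfeld coproduct and the evaluation of the trailing $\phi_i^\pm(z)$ factors at the spectral parameter prescribed by the $\delta$-function: one must carefully identify the cases in which these factors collapse to $0$ via $\psi(1)^{\pm 1}$ or $\psi(q^2)$, and check that these cases correspond bijectively to the boundary vanishing of $\tilde{e}_i$ and $\tilde{f}_i$ on $\mathcal{T}_{[1,\ell]}$. Once this dictionary is established, the thin property and the matching of $\ell$-weights assemble everything into the desired isomorphism of $\U_q(\hat{sl}_\infty)$-modules.
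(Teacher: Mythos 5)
Your proposal is correct and takes essentially the same approach as the paper: the paper's proof simply defines the inverse of your map, $f : V(Y_{\ell, aq^{-\ell+1}}Y_{0,aq}^{-1}) \to V$, $T \mapsto \ffbox{i_1}_a \otimes \cdots \otimes \ffbox{i_\ell}_{aq^{-2(\ell-1)}}$, and asserts the intertwining property ``by straightforward computations.'' Your iterated Drinfeld-coproduct bookkeeping --- matching the $\ell$-weights via the telescoping spectral parameters $a' q^{\ell+1-2j} = aq^{-2(j-1)}$ and identifying the vanishing terms $\psi(1)^{-1}=0$ and $\psi(q^2)=0$ with the boundary vanishing of $\tilde f_i$ and $\tilde e_i$ on $\mathcal{T}_{[1,\ell]}$ --- is exactly that omitted computation, carried out correctly.
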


\begin{proof}
Let us define the morphism of vector spaces $f : V(Y_{\ell, aq^{-\ell+1}}Y_{0,aq}) \rightarrow V$ by setting
$$f(T) = \ffbox{i_1}_a \otimes \ffbox{i_2} \otimes \cdots \otimes \ffbox{i_\ell}_{aq^{-2(\ell-1)}}$$
for all semi-standard tableaux $T = (i_1 < i_2 < \cdots < i_\ell) $ in $\mathcal{T}_{[1, \ell]}$. Hence defined $f$ is an isomorphism of $\U_q(\hat{sl}_\infty)$-modules: this follows by straightforward computations.
\end{proof}

\begin{prop}\label{proptpeflwminf}
Fix $a \in \CC^{\ast}, k \in \NN^{\ast}$ and $\ell_1, \ell_2, \cdots, \ell_k \geq 1$. Then $\Delta$ endows the tensor product
$$V \left(\begin{array}{c} \ffbox{1} \\ \vdots \\ \ffbox{\ell_1} \end{array}_{a} \right) \otimes V \left(\begin{array}{c} \ffbox{1} \\ \vdots \\ \ffbox{\ell_2} \end{array}_{aq^{-2\ell_1}} \right) \otimes \cdots \otimes V \left(\begin{array}{c} \ffbox{1} \\ \vdots \\ \ffbox{\ell_k} \end{array}_{aq^{-2(\ell_1 + \cdots + \ell_{k-1})}} \right)$$
with a structure of $\U_q(\hat{sl}_\infty)$-module. Furthermore, it has a submodule isomorphic to
$$V \left(\begin{array}{c} \ffbox{1} \\ \vdots \\ \ffbox{\ell} \end{array}_{a} \right) \text{ with } \ell = \ell_1 + \ell_2 + \cdots + \ell_k.$$
\end{prop}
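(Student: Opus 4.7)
The plan is to realize the given tensor product as a subspace of an $\ell$-fold tensor product of vector representations whose $\U_q(\hat{sl}_\infty)$-module structure is already known, and then inherit the action. I would first set $a_j = aq^{-2(\ell_1+\cdots+\ell_{j-1})}$ and observe that the multiset of parameters $\{a_jq^{-2s} : 1 \le j \le k, \ 0 \le s \le \ell_j-1\}$ is exactly the $q$-segment $\{a, aq^{-2}, \ldots, aq^{-2(\ell-1)}\}$, where $\ell = \ell_1+\cdots+\ell_k$. Composing the inclusions
\[
V\left(\begin{array}{c}\ffbox{1} \\ \vdots \\ \ffbox{\ell_j}\end{array}_{a_j}\right) \hookrightarrow V(\ffbox{1}_{a_j}) \otimes \cdots \otimes V(\ffbox{1}_{a_jq^{-2(\ell_j-1)}})
\]
from the proposition preceding Theorem \ref{thmtpvrisomeflwm} then yields a linear embedding of the $k$-fold tensor product into
\[
W := V(\ffbox{1}_a) \otimes V(\ffbox{1}_{aq^{-2}}) \otimes \cdots \otimes V(\ffbox{1}_{aq^{-2(\ell-1)}}),
\]
which carries a $\U_q(\hat{sl}_\infty)$-module structure via $\Delta$ by Theorem \ref{elwmtpinf}. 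Hence the formal sums defining $\Delta(x_i^{\pm}(z))$ on the tensor product are automatically well-defined as restrictions of those on $W$.

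Next, I would verify closure of the subspace. An induction from \eqref{deltaxplus} and \eqref{deltacart} gives the explicit iterated Drinfeld coproduct
\[
\Delta^{(k-1)}(x_i^{+}(z)) = \sum_{j=1}^{k} \phi_i^{-}(z)^{\otimes(j-1)} \otimes x_i^{+}(z) \otimes 1^{\otimes(k-j)},
\]
with a symmetric formula for $x_i^{-}(z)$ coming from \eqref{deltaxmoins}. Applied to a pure tensor $v_1 \otimes \cdots \otimes v_k$ sitting in the embedded tensor product, each summand is a tensor whose $s$-th entry is one of $v_s$, $\phi_i^{-}(z) \cdot v_s$ or $x_i^{+}(z) \cdot v_s$; since each factor $V\left(\begin{array}{c}\ffbox{1} \\ \vdots \\ \ffbox{\ell_j}\end{array}_{a_j}\right)$ is a $\U_q(\hat{sl}_\infty)$-module by that same preceding proposition, every such entry stays in the appropriate subspace. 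The tensor product is therefore $\Delta$-stable, and Lemma \ref{lemsubminf} supplies the module structure.

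For the submodule statement, the proposition preceding Theorem \ref{thmtpvrisomeflwm} (now applied with the full length $\ell$) realizes $V\left(\begin{array}{c}\ffbox{1} \\ \vdots \\ \ffbox{\ell}\end{array}_a\right)$ as the sub-$\U_q(\hat{sl}_\infty)$-module of $W$ spanned by the vectors $\ffbox{i_1}_a \otimes \cdots \otimes \ffbox{i_\ell}_{aq^{-2(\ell-1)}}$ with $i_1 < \cdots < i_\ell$. Any strictly increasing sequence of length $\ell$ restricts to a strictly increasing sequence on each of the $k$ consecutive blocks of positions of lengths $\ell_1, \ldots, \ell_k$, so this submodule lies inside the embedded image of the $k$-fold tensor product, yielding the required copy.

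The principal obstacle is the closure verification. Once the iterated coproduct is in hand and the preceding proposition is invoked for each factor, the argument reduces to straightforward bookkeeping with no genuine computation left; the whole proof essentially reorganizes information already present in the $\ell$-fold vector-representation tensor product $W$.
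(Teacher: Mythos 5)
Your proof is correct and takes essentially the same route as the paper: both arguments realize the $k$-fold tensor product inside the $\ell$-fold tensor product of vector representations along the $q$-segment $\{a, aq^{-2}, \ldots, aq^{-2(\ell-1)}\}$, and both identify the required submodule as the span of the fully increasing tensors, which is by definition the module $V\left(\begin{array}{c} \ffbox{1} \\ \vdots \\ \ffbox{\ell} \end{array}_{a} \right)$. If anything you are more explicit than the paper, whose well-definedness step is a bare citation of Proposition \ref{proptpdefinf} (stated there only for pairs of extremal fundamental loop weight modules with equal $\ell$), whereas you route through Theorem \ref{elwmtpinf} and verify closure via the iterated Drinfeld coproduct together with Lemma \ref{lemsubminf}.
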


\begin{proof}
By Proposition \ref{proptpdefinf}, the action on the tensor product is well-defined. Furthermore let us consider the subvector space generated by vectors of the form
$$\ffbox{i_1^{(1)}} \otimes \cdots \otimes \ffbox{i_{\ell_1}^{(1)}} \otimes \ffbox{i_1^{(2)}} \otimes \cdots \otimes \ffbox{i_{\ell_2}^{(2)}} \otimes \cdots \otimes \ffbox{i_1^{(k)}} \otimes \cdots \otimes \ffbox{i_{\ell_k}^{(k)}}$$
with $i_1^{(1)} < \cdots < i_{\ell_1}^{(1)} < i_1^{(2)} < \cdots < i_{\ell_2}^{(2)} < \cdots < i_1^{(k)} < \cdots < i_{\ell_k}^{(k)}$. It is by definition the submodule $V \left(\begin{array}{c} \ffbox{1} \\ \vdots \\ \ffbox{\ell} \end{array}_{a} \right)$ of $V(\ffbox{1}_a) \otimes \cdots \otimes V(\ffbox{1}_{aq^{-2(\ell-1)}})$ with $\ell = \ell_1 + \cdots + \ell_k$.
\end{proof}

Set $\ell, k \geq 1$ and $a \in \CC^{\ast}$. Let us consider the tensor product $V$ of vector representations
\begin{align*}
V(\ffbox{1}_a) \otimes \cdots \otimes V(\ffbox{1}_{aq^{-2(\ell-1)}}) \otimes \cdots \otimes V(\ffbox{1}_{aq^{-2(1-k)}}) \otimes \cdots \otimes V(\ffbox{1}_{aq^{-2(\ell-k)}}).
\end{align*}

Let $\mathcal{T}_{[1, \ell] \times [1, k]}$ be the set of semi-standard tableaux $T = (T_{i,j})_{(i, j) \in [1, \ell] \times [1, k]}$. Then $(\mathcal{T}_{[1, \ell] \times [1, k]}, \wt, \tilde{e}_i, \tilde{f}_i)$ is a $\U_q(sl_\infty)$-crystal \cite{kashiwara_bases_2002}. To a tableaux $T = (T_{i,j}) \in \mathcal{T}_{[1, \ell] \times [1, k]}$, it corresponds a vector $T_a$ in $V$, defined by (we read the tableaux from top to bottom and from left to right)
$$T_a = \ffbox{T_{1,1}}_a \otimes \cdots \otimes \ffbox{T_{\ell,1}}_{aq^{-2(\ell-1)}} \otimes \cdots \otimes \ffbox{T_{1,k}}_{aq^{-2(1-k)}} \otimes \cdots \otimes \ffbox{T_{\ell,k}}_{aq^{-2(\ell-k)}}.$$
Denote by $T^e = (T_{i,j})$ the semi-standard tableaux such that $T_{i, j} = i$ for all \linebreak $(i, j) \in [1, \ell] \times [1, k]$:
$$T^e = \begin{tiny}
\begin{tabular}{|c|c|c|}
\hline 
1 & \ldots & 1 \\ 
\hline 
\vdots &   & \vdots \\ 
\hline 
$\ell$& \ldots  & $\ell$ \\
\hline 
\end{tabular}
\end{tiny}.$$
Let $V(T_a^e)$ be the subvector space of $V$ generated by vectors $T_a$ with $T \in \mathcal{T}_{[1, \ell] \times [1, k]}$.

\begin{prop}\label{propactwdtpelvmwmultinf}
Set $\ell, k \geq 1$ and $a \in \CC^{\ast}$. The coproduct $\Delta$ endows $V(T_a^e)$ with a structure of thin $\U_q(\hat{sl}_\infty)$-module.
\end{prop}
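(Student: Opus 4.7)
The plan is to verify the hypothesis of Lemma~\ref{lemsubminf} for the subspace $V(T_a^e)$, and then to deduce thinness from a weight calculation. Concretely, for each basis vector $T_a$ with $T \in \mathcal{T}_{[1,\ell] \times [1,k]}$ and each $i \in \ZZ$, I must show that $\Delta(x_i^+(z)) \cdot T_a$ is a well-defined element of $V(T_a^e)$ (the case of $x_i^-(z)$ being symmetric). Via the iterated Drinfeld coproduct this action decomposes as a sum indexed by tensor positions $r$: only positions with $T_r = i+1$ contribute, each with a delta $\delta(a_r q^{i} z)$ weighted by $\prod_{s < r} \phi_i^-(z)|_{v_s}$. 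Each factor equals $\psi(a_s q^i z)$ when $T_s = i$, $\psi(a_s q^{i+2} z)^{-1}$ when $T_s = i+1$, and $1$ otherwise. Substituting $a_s = a q^{-2 d_s}$ with $d_s := i_s - j_s$ and evaluating at $z = (a_r q^i)^{-1}$, these factors become $\psi(q^{2(d_r - d_s)})^{\pm 1}$, which have a simple pole or a simple zero only when $d_s - d_r \in \{-1,0,1\}$.

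The delicate issue is that poles of the form $\psi(1) = \infty$ can arise from positions $s < r$ lying on the same diagonal as $r$ (so that $a_s = a_r$). The key input is that labels strictly increase along each diagonal of a semi-standard tableau, since if $(i_s, j_s)$ and $(i_r, j_r)$ lie on the same diagonal with $i_s < i_r$ then $T_{i_s, j_s} \leq T_{i_s, j_r} < T_{i_r, j_r}$. Consequently no $s < r$ on diagonal $d_r$ can have $T_s = T_r$; the only potential pole comes from $T_{(i_r - 1, j_r - 1)} = i$, and in that case the bounds $T_{(i_r - 1, j_r - 1)} \leq T_{(i_r - 1, j_r)} < T_{(i_r, j_r)} = i+1$ force $T_{(i_r - 1, j_r)} = i$, producing a compensating simple zero $\psi(q^2) = 0$ on diagonal $d_r - 1$. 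A parallel pairing between the $T_s = i+1$ factor on diagonal $d_r + 1$ (coming from $T_{(i_r, j_r - 1)}$, which is forced to equal $i+1$ by the same bounds) and the $T_s = i$ factor on diagonal $d_r$ completes the cancellation, so the evaluation is finite. Moreover, any row violation $T_{(i_r, j_r - 1)} = i+1$ or column violation $T_{(i_r - 1, j_r)} = i$ that would prevent the resulting tableau from being semi-standard contributes an unmatched factor of $\psi(1)^{-1} = 0$ or $\psi(q^2) = 0$ respectively, killing the coefficient. Thus $\Delta(x_i^+(z)) T_a$ lies in $V(T_a^e)$, and Lemma~\ref{lemsubminf} supplies the module structure.

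Thinness is then immediate: by (\ref{deltacart}) the $\ell$-weight of $T_a$ is $m_T = \prod_{(i,j)} \ffbox{T_{i,j}}_{aq^{-2(i-j)}}$, and the assignment $T \mapsto m_T$ is injective on $\mathcal{T}_{[1,\ell]\times[1,k]}$, since the multiset of exponents of the variables $Y_{r,b}$ determines both the label and the diagonal of each box of $T$. Hence the joint spectrum of $\U_q(\hat{\Hlie})$ on $V(T_a^e)$ is simple, and the module is thin. The main obstacle is the pole-zero matching argument of the second paragraph: one must verify in full generality that the singular $\psi$-factors arising from diagonal coincidences always cancel in pairs, and that the residual coefficient vanishes precisely on the non-semi-standard outputs. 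Both aspects rely on a careful combinatorial analysis of how the labels on the three consecutive diagonals $d_r - 1, d_r, d_r + 1$ interact in a semi-standard filling.
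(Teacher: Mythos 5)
Your argument is correct, and mechanically it is the same proof as the paper's: expand the iterated Drinfeld coproduct, evaluate the product of $\psi$-factors at the support of the delta function, use the special values $\psi(q^2)=0$ and $\psi(1)^{-1}=0$ to annihilate precisely the terms whose output tableau would fail semi-standardness, and invoke Lemma \ref{lemsubminf}; thinness then comes from the $\ell$-weight computation. Where you differ is in completeness rather than route. The paper compresses the verification into two local identities --- the column-adjacent computation (\ref{eqinterinf2}) (note that the paper's first case reads $T_{i,j+1}$, but given the bound $i\leq\ell-1$ and the parameter ratio $q^{-2}$ in (\ref{eqinterinf2}) it is the box below, $T_{i+1,j}$, that is meant) and the telescoping identity for a row run of equal entries --- and simply asserts that these configurations suffice; it never discusses the poles caused by coinciding spectral parameters on a common diagonal, which is exactly why Proposition \ref{proptpdefinf}(i) cannot be invoked wholesale here (the ratio $1$ is excluded there). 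Your diagonal-indexed pole/zero bookkeeping makes this delicate point explicit: strict increase of entries along diagonals confines the potential pole to the configuration $T_{(i_r-1,j_r-1)}=i$, and semi-standardness then forces the $2\times 2$ block with entries $i,i$ over $i+1,i+1$, hence \emph{two} compensating zeros --- one cancelling the pole exactly, as $\psi(w)\psi(w)^{-1}=1$ with $w=a_rq^iz$, the residual $\psi(q^2)=0$ killing the (indeed doubly violating) term. In this respect your write-up is more rigorous than the printed proof on the one genuinely subtle step. You also argue thinness via injectivity of $T\mapsto m_T$, which the paper asserts without proof; your sketch is sound and can be made precise by noting that each entry occurs at most once per diagonal, that the exponent of $Y_{r,aq^{r-1-2d}}$ in $m_T$ equals $N_{r,d}-N_{r+1,d+1}$ where $N_{r,d}\in\{0,1\}$ counts boxes with entry $r$ on diagonal $d$, and that this telescopes to recover every $N_{r,d}$, hence $T$. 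One cosmetic slip: the inverted factors evaluate to $\psi(q^{2(d_r-d_s+1)})^{-1}$ rather than $\psi(q^{2(d_r-d_s)})^{-1}$, though your conclusion that singular factors occur only for $d_s-d_r\in\{-1,0,1\}$ is nonetheless correct.
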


\begin{proof}
Let $T = (T_{i,j}) \in \mathcal{T}_{[1, \ell] \times [1, k]}$ be a semi-standard tableaux. We have to show that for all $s \in \ZZ$, $x_{s}^-(z) \cdot T_a$ is also in $V(T_a^e)$. Actually it suffices to consider the following cases:
\begin{itemize}
\item[-] if $T$ is such that there exist $1 \leq i \leq \ell-1$ and $1 \leq j \leq k$ such that $$T_{i,j}=s \text{ and } T_{i, j+1} = s+1.$$ Then by (\ref{eqinterinf2}), $x_{s}^-(z) \cdot T_a$ is in $ V(T_a^e)$.
\item[-] if $T$ is such that there exist $1 \leq i \leq \ell$ and $p \geq 1$, $1 \leq j \leq k-p$ such that
$$T_{i,j} = T_{i, j+1} = \cdots = T_{i, j+p} = s \text{ and } T_{i, j+p+1} \geq s+1.$$
Then the fact that $x_{s}^-(z) \cdot T_a$ belongs to $ V(T_a^e)$ is a consequence of the following equality in $V(\ffbox{1}_a) \otimes \cdots \otimes V(\ffbox{1}_{aq^{2p}})$
\begin{align*}
x_s^-(z) \cdot \ffbox{s}_a \otimes \cdots \otimes \ffbox{s}_{aq^{2p}} = \delta( a q^{s+2p} z ) \ffbox{s}_a \otimes \cdots \otimes \ffbox{s}_{aq^{2(p-1)}} \otimes \ffbox{s+1}_{aq^{2p}}.
\end{align*}
\end{itemize}
We proceed in the same way for the operators $x_s^+(z)$.
\end{proof}

\begin{thm}\label{thmtpelwminf}
Set $\ell, k \geq 1$ and $a \in \CC^{\ast}$. Then $V(T_a^e)$ is an irreducible extremal loop weight $\U_q(\hat{sl}_\infty)$-module of $\ell$-weight
$$m_{T^e} = Y_{\ell, aq^{-\ell+1}}Y_{\ell, aq^{-\ell+3}} \cdots Y_{\ell, aq^{-\ell+1+2(k-1)}} Y_{0,aq}^{-1}Y_{0,aq^3}^{-1} \cdots Y_{0,aq^{1+2(k-1)}}^{-1}.$$
\end{thm}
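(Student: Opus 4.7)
The theorem bundles four assertions about $V(T^e_a)$: (1) $T^e_a$ has $\ell$-weight $m_{T^e}$; (2) $\U_q(\hat{sl}_\infty) \cdot T^e_a = V(T^e_a)$; (3) $T^e_a$ is extremal for $\U_q^h(\hat{sl}_\infty)$ and the local-finiteness condition of Definition \ref{defelminf}(iii) holds; and (4) $V(T^e_a)$ is irreducible. My plan is to dispatch (1) and half of (3) by direct computation, then reduce (2), the extremality half of (3), and (4) to the crystal-matching lemma \ref{lirepcryinf}.

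For (1), since $\Delta(\phi_i^\pm(z)) = \phi_i^\pm(z) \otimes \phi_i^\pm(z)$, the monomial of $T^e_a$ is the product, over $(i,j) \in [1,\ell] \times [1,k]$, of the $\ell$-weight of $\ffbox{i}_{aq^{2(j-i)}}$, namely $Y_{i-1, aq^{2j-i}}^{-1} Y_{i, aq^{2j-i-1}}$. For fixed $j$ the product over $i = 1, \dots, \ell$ telescopes to $Y_{0, aq^{2j-1}}^{-1} Y_{\ell, aq^{2j-\ell-1}}$, and taking the further product over $j = 1, \dots, k$ yields exactly $m_{T^e}$. The local-finiteness half of (3) follows by iterating Lemma \ref{tpalgv} across the tensor factors of the ambient fusion product of vector representations (each of which is $\U_q(\hat{sl}_\infty)_J$-locally finite).

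The heart of the argument is to establish (2) and the extremality half of (3) simultaneously. For this I would show that the $\U_q^h(\hat{sl}_\infty)$-action on $V(T^e_a)$ realizes the $\U_q(sl_\infty)$-crystal $\mathcal{T}_{[1,\ell]\times[1,k]}$ in the sense of condition \eqref{lienrepcryinf} of Lemma \ref{lirepcryinf}, with $v_T = T_a$. This amounts to expanding $(x_{i,0}^\pm)^{(p)}$ via the Drinfeld coproduct as a sum of terms acting on each tensor factor of the ambient fusion product and checking that the surviving summands match the tensor-product rule for crystals of semi-standard rectangular tableaux. The argument is a direct elaboration of the single-column computation carried out in the proof of Theorem \ref{thmformactinf}, the extra cross-column cancellations being exactly the vanishings \eqref{eqinterinf2} already exploited in Proposition \ref{propactwdtpelvmwmultinf}. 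Since $\mathcal{T}_{[1,\ell]\times[1,k]}$ is a connected $\U_q(sl_\infty)$-crystal in which $T^e$ is an extremal element of weight $k\Lambda_\ell - k\Lambda_0$, Lemma \ref{lirepcryinf} then yields at once both that $T^e_a$ is extremal and that $\U_q^h(\hat{sl}_\infty) \cdot T^e_a = V(T^e_a)$. For (4), thinness of $V(T^e_a)$ (Proposition \ref{propactwdtpelvmwmultinf}) implies that any non-zero $\U_q(\hat{sl}_\infty)$-submodule contains some basis vector $T_a$; applying Lemma \ref{lirepcryinf} with this $T$ in place of $T^e$ (every $T$ lies in the same connected component) shows the submodule exhausts the basis and equals $V(T^e_a)$.

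The main obstacle is the crystal identification in (2)--(3): organising the Drinfeld-coproduct expansion of $(x_{i,0}^\pm)^{(p)}$ on a rectangular tableau so that the surviving summands match exactly $(\tilde{e}_i^p \cdot T)_a$ or $(\tilde{f}_i^p \cdot T)_a$. This is a bookkeeping extension of the single-column analysis underlying Theorem \ref{thmformactinf}, but must now account for the interlocking of $k$ columns; the essential cancellations arise from the poles of $\psi$ at $1$ that were already the engine of Propositions \ref{proptpdefinf} and \ref{propactwdtpelvmwmultinf}.
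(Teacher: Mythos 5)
Your weight computation and the local-finiteness reduction via Lemma \ref{tpalgv} are fine, but the heart of your argument contains a genuine gap: the crystal-matching condition \eqref{lienrepcryinf} of Lemma \ref{lirepcryinf} \emph{cannot} hold on $V(T^e_a)$ once $k \geq 2$, for any choice of basis. Indeed, \eqref{lienrepcryinf} demands simultaneously $x_i^- \cdot v_b = v_{\tilde{f}_i \cdot b}$, $x_i^- \cdot v_{\tilde{f}_i \cdot b} = v_{\tilde{f}_i^2 \cdot b}$ and $(x_i^-)^{(2)} \cdot v_b = v_{\tilde{f}_i^2 \cdot b}$, which is self-contradictory on any $i$-string with at least three elements, since $(x_i^-)^{(2)} = \frac{1}{[2]_q}(x_i^-)^2$ forces $(x_i^-)^{(2)} \cdot v_b = \frac{1}{[2]_q} v_{\tilde{f}_i^2 \cdot b}$. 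The rectangular crystal $\mathcal{T}_{[1,\ell] \times [1,k]}$ has $i$-strings with $k+1$ elements (a row of $s$'s), so your claimed verification is impossible — this is a structural obstruction, not the bookkeeping issue you flag. One sees it concretely already for $\ell = 1$, $k = 2$: the Drinfeld coproduct gives
$$x_{s,0}^- \cdot \left( \ffbox{s}_a \otimes \ffbox{s+1}_{aq^{2}} \right) = \psi(q^4)^{-1} \, \ffbox{s+1}_a \otimes \ffbox{s+1}_{aq^{2}} = [2]_q \; \ffbox{s+1}_a \otimes \ffbox{s+1}_{aq^{2}},$$
with coefficient $[2]_q \neq 1$, whereas \eqref{lienrepcryinf} would require coefficient $1$. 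The exact matching of Theorem \ref{thmformactinf} works precisely because every $i$-string of the single-column crystal $\mathcal{T}_{[1,\ell]}$ has at most two elements; this feature is lost for $k \geq 2$. Consequently your simultaneous derivation of cyclicity, extremality, and (via ``apply Lemma \ref{lirepcryinf} with $T$ in place of $T^e$'') irreducibility is unsupported.

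This is exactly the point where the paper diverges from your plan: it does not invoke Lemma \ref{lirepcryinf}, but proves a weaker substitute (the unnumbered lemma stated right after the theorem) in which the matching conditions are imposed only along the Weyl orbit $W \cdot T^e$, where they take the consistent form $x_i^{\pm} \cdot v_b = 0$ and $(x_i^{\mp})^{(\pm \wt(b)(h_i))} \cdot v_b = v_{S_i(b)}$ — a single full-string divided power, which absorbs the $q$-binomial coefficients that wreck the pointwise matching. The orbit $W \cdot T^e$ consists of the constant-row tableaux, and on those the hypothesis is checked from the identities
$$(x_{s, 0}^-)^{(k)} \cdot \ffbox{s}_a \otimes \cdots \otimes \ffbox{s}_{aq^{2(k-1)}} = \ffbox{s+1}_a \otimes \cdots \otimes \ffbox{s+1}_{aq^{2(k-1)}},$$
together with the analogue for $(x_{s-1,0}^+)^{(k)}$. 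To repair your proposal, replace the full-crystal matching by these orbit identities and the orbit version of the lemma; irreducibility then needs a separate argument not routed through Lemma \ref{lirepcryinf} — thinness still puts some basis vector $T_a$ in any nonzero submodule, but generating the rest of the basis must use the explicit current action (whose surviving terms are sums over several box moves with coefficients that are nonzero but generally not $1$), in the spirit of Proposition \ref{proptpdefinf}, rather than the crystal lemma.
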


\begin{proof}
Let us recall that the semi-standard tableaux
$$T^e = \begin{tiny}
\begin{tabular}{|c|c|c|}
\hline 
1 & \ldots & 1 \\ 
\hline 
\vdots &   & \vdots \\ 
\hline 
$\ell$& \ldots  & $\ell$ \\
\hline 
\end{tabular}
\end{tiny}$$
is an extremal element of the $\U_q(sl_\infty)$-crystal $\mathcal{T}_{[1, \ell] \times [1, k]}$ of weight $k \Lambda_\ell - k \Lambda_0$. Furthermore we have
$$W \cdot T^e = \lbrace T = (T_{i,j})_{(i,j) \in [1, \ell] \times [1, k]} \vert T_{i,1} = T_{i, 2} = \cdots = T_{i, k} \text{ for all } 1 \leq i \leq \ell \rbrace.$$
Using the equalities ($s \in \ZZ$)
\begin{align*}
(x_{s, 0}^-)^{(k)} \cdot \ffbox{s}_a \otimes \cdots \otimes \ffbox{s}_{aq^{2(k-1)}} & = \ffbox{s+1}_a \otimes \cdots \otimes \ffbox{s+1}_{aq^{2(k-1)}},\\
(x_{s-1, 0}^+)^{(k)} \cdot \ffbox{s}_a \otimes \cdots \otimes \ffbox{s}_{aq^{2(k-1)}} & = \ffbox{s-1}_a \otimes \cdots \otimes \ffbox{s-1}_{aq^{2(k-1)}},
\end{align*}
we get
$$x_i^{\pm} \cdot T_a = 0 \text{ and } (x_i^{\mp})^{(\pm \wt(T)(h_i))} \cdot T_a = S_i(T)_a \text{ if } \pm \wt(T)(h_i) \geq 0$$
for all $T \in W \cdot T^e$. Then the result is a consequence of the following Lemma. 
\end{proof}

\begin{lem}
Let $V$ be a $\U_q(sl_\infty)$-module with basis $(v_b)_{b \in \mathcal{B}}$ indexed by a \linebreak $\U_q(sl_\infty)$-crystal $\mathcal{B}$. Assume that $b^e \in \mathcal{B}$ is extremal of weight $\lambda \in P$ and for all $i \in \ZZ$ and $b \in W \cdot b^e$,
$$\wt(v_b) = \wt(b), \ x_i^{\pm} \cdot v_b = 0 \text{ and } (x_i^{\mp})^{(\pm \wt(b)(h_i))} \cdot v_b = v_{S_i(b)} \text{ if } \pm \wt(b)(h_i) \geq 0.$$
Then $v_{b^e}$ is an extremal vector of weight $\lambda$.
\end{lem}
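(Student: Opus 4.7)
The plan is to verify the definition of extremality for $v_{b^e}$ directly, by induction on the length $l$ of the word $S_{i_1}, \ldots, S_{i_l}$ applied to $v_{b^e}$. The key claim I will establish is that for every $l \geq 0$ and every $i_1, \ldots, i_l \in \ZZ$, the iterate $S_{i_l} \cdots S_{i_1}(v_{b^e})$ is well-defined and equals $v_b$ with $b = S_{i_l} \cdots S_{i_1}(b^e) \in W \cdot b^e$ (computed in the crystal $\mathcal{B}$). Once this claim is in hand, extremality of $v_{b^e}$ at weight $\lambda$ follows immediately: $\wt(v_{b^e}) = \wt(b^e) = \lambda$ by the weight hypothesis, and every iterate is a basis vector indexed by an element of $W \cdot b^e$, so the annihilation hypothesis guarantees $i$-extremality for every $i \in \ZZ$.

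For the base case $l=0$ one considers $v_{b^e}$ itself. Since $b^e$ is crystal-extremal of weight $\lambda$, for every $i$ either $\lambda(h_i) \geq 0$ or $\lambda(h_i) \leq 0$, and the assumption $x_i^{\pm} \cdot v_{b^e} = 0$ when $\pm \wt(b^e)(h_i) \geq 0$ ensures $v_{b^e}$ is $i$-extremal. The second part of the hypothesis then identifies $S_i(v_{b^e})$ with $v_{S_i(b^e)}$, and $S_i(b^e) \in W \cdot b^e$ by the Weyl-group parametrization of the extremal family $\{b_w\}_{w \in W}$ with $b_{Id} = b^e$ used in the proof of Lemma~\ref{lirepcryinf}.

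For the inductive step I assume $S_{i_{l-1}} \cdots S_{i_1}(v_{b^e}) = v_b$ with $b \in W \cdot b^e$. Since $b$ belongs to $W \cdot b^e$, the hypothesis applies verbatim to $v_b$: it has weight $\wt(b)$, the appropriate $x_{i_l}^{\pm}$-annihilation holds, so $v_b$ is $i_l$-extremal, and $(x_{i_l}^{\mp})^{(\pm \wt(b)(h_{i_l}))} \cdot v_b = v_{S_{i_l}(b)}$, with $S_{i_l}(b)$ again in $W \cdot b^e$ by the definition of the extremal orbit. This closes the induction and completes the verification. I do not expect any real obstacle; the only conceptual point is the stability of $W \cdot b^e$ under the crystal operators $S_i$, which is built into the Weyl-group parametrization of the extremal family, so the argument reduces to transferring crystal-level extremality of $b^e$ to module-level extremality of $v_{b^e}$ via the given basis identification.
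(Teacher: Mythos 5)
Your proposal is correct and takes essentially the same approach as the paper: the paper's proof is declared ``analogue'' to that of Lemma \ref{lirepcryinf}, which transfers the crystal-level extremal family $\{b_w\}_{w\in W}$ (with $b_{Id}=b^e$) to the module via the basis identification $v_w=v_{b_w}$, and your induction on the length of the word $S_{i_l}\cdots S_{i_1}$ is just an explicit unwinding of that same transfer. The one point you rightly flag --- stability of $W\cdot b^e$ under the operators $S_i$ --- is built into the definition of the orbit $W\cdot v=\{S_{i_1}\cdots S_{i_l}(v)\}$, so there is no gap.
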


\begin{proof}
The proof is analogue to the one of Lemma \ref{lirepcryinf}.
\end{proof}

\section{Application to quantum toroidal algebras}

The quantum toroidal algebras $\U_q(sl_{n+1}^{tor})$ ($n \geq 2$) were introduced by Ginzburg-Kapranov-Vasserot in type A \cite{ginzburg_langlands_1995}. They are quantum groups analogs of elliptic Cherednik algebras \cite{cherednik_double_1995} to whom they are related via Schur-Weyl duality \cite{varagnolo_schur_1996}. The author has defined extremal loop weight modules in the context of $\U_q(sl_{n+1}^{tor})$ \cite{mansuy_quantum_2012, mansuy_extremal_2013}. The main motivation is the construction of finite-dimensional representations of the quantum toroidal algebras at roots of unity.

A combinatorial link between the representation theory of $\U_q(\hat{sl}_\infty)$ and of quantum toroidal algebras is conjectured in \cite{hernandez_algebra_2011}. This conjecture is proved for the class of Kirillov-Reshetikhin modules. The main motivation in \cite{hernandez_algebra_2011} is to predict $q$--character formulae for representations of $\U_q(sl_{n+1}^{tor})$.

In this section, we prove \cite[Conjecture 5.3]{hernandez_algebra_2011} for the particular family of extremal fundamental loop weight modules of $\U_q(\hat{sl}_\infty)$: by the combinatorial link highlighted in \cite{hernandez_algebra_2011}, they are related to $\U_q(sl_{n+1}^{tor})$-modules defined in \cite{mansuy_extremal_2013}. The aim is to construct extremal loop weight modules of $\U_q(sl_{n+1}^{tor})$. We show that we recover the extremal fundamental loop weight modules constructed in \cite{mansuy_quantum_2012, mansuy_extremal_2013}.

In the first part, we recall some definitions about the quantum toroidal algebras $\U_q(sl_{n+1}^{tor})$ ($n \geq 2$) and its combinatorial link with $\U_q(\hat{sl}_\infty)$ conjectured in \cite{hernandez_algebra_2011}. In the second part, we prove this conjecture for the class of extremal fundamental loop weight modules of $\U_q(\hat{sl}_\infty)$ (Theorem \ref{thmqchaactreptorinf}). We recover in that way the extremal fundamental loop weight modules of $\U_q(sl_{n+1}^{tor})$ constructed in \cite{mansuy_quantum_2012, mansuy_extremal_2013} (Theorem \ref{thmelwmtorinf}).

\subsection{Reminder about quantum toroidal algebras}

Let us recall the definition of the quantum toroidal algebra $\U_q(sl_{n+1}^{tor})$ ($n \geq 2$). Set $I_n = \ZZ / (n+1) \ZZ$. Let $C = (C_{\overline{i},\overline{j}})_{\overline{i}, \overline{j} \in I_n}$ be the Cartan matrix of type $A_n^{(1)}$,
$$ C_{\overline{i},\overline{i}} = 2 \text{ , } C_{\overline{i},\overline{i+1}} = C_{\overline{i+1}, \overline{i}} = -1 \text{ and } C_{\overline{i},\overline{j}} = 0 \text{ if } \overline{j} \neq \overline{i}, \overline{i \pm 1}.$$
The algebra $\U_q(sl_{n+1}^{tor})$ is defined by the same generators and relations as in Definition \ref{defqaainf} with $\overline{i}, \overline{j} \in I_n$.

The representation theory of $\U_q(sl_{n+1}^{tor})$ is similar to the one of $\U_q(\hat{sl}_\infty)$: the simple integrable $\ell$-highest weight modules are parametrized by Drinfeld polynomials. In particular, the Kirillov-Reshetikhin modules can be defined in an analogue way. One defines $q$--characters $\chi_{q, n}(V)$ as above for integrable representations $V$ with finite-dimensional $\ell$-weight spaces: it is a sum of elements in $A_n$, where $A_n$ is the group of monomials of the form
$$m = \prod_{\overline{i} \in I_n, a \in \CC^{\ast}} Y_{\overline{i}, a}^{u_{\overline{i}, a}(m)}, \ u_{\overline{i}, a}(m) \in \ZZ.$$

Consider the ring morphism
$$\phi_n : \ZZ[Y_{i,a}^{\pm 1}]_{i \in \ZZ, a \in \CC^{\ast}} \rightarrow \ZZ[Y_{\overline{i},a}^{\pm 1}]_{\overline{i} \in I_n, a \in \CC^{\ast}}$$
defined, for $i \in \ZZ$ and $a \in \CC^{\ast}$, by
$$\phi_n(Y_{i,a}^{\pm 1}) = Y_{\overline{i}, a}^{\pm 1}.$$

Let $\mathrm{Im}(\chi_q)$ (resp. $\mathrm{Im}(\chi_{q, n})$) be the image of the Grothendieck group corresponding to the category of integrable $\U_q(\hat{sl}_\infty)$-modules (resp. integrable $\U_q(sl_{n+1}^{tor})$-modules) belonging to $\mathcal{O}$. The morphism $\phi_n$ gives rise naturally to a group morphism
$$\phi_n : \mathrm{Im}(\chi_q) \rightarrow \ZZ[[Y_{\overline{i},a}^{\pm 1}]]_{\overline{i} \in I_n, a \in \CC^{\ast}}.$$
More precisely, one can show that $\phi_n \left(\mathrm{Im}(\chi_q) \right) \subset \mathrm{Im}(\chi_{q, n})$ by using the characterization stated in the proof of \cite[Theorem 4.2]{hernandez_algebra_2011}. Hence for $V$ a $\U_q(\hat{sl}_\infty)$-module in the category $\mathcal{O}_{\mathrm{int}}$, $\phi_n \left(\chi_q(V) \right)$ is the $q$--character of a representation of $\U_q(sl_{n+1}^{tor})$, which is a priori virtual. It is conjectured in \cite{hernandez_algebra_2011} that

\begin{conj}\label{conjherninf}
Let $V$ be a simple representation in $\mathcal{O}_{\mathrm{int}}$ for $\U_q(\hat{sl}_\infty)$. Then $\phi_n(\chi_q(V))$ is the $q$--character of an actual representation of $\U_q(sl_{n+1}^{tor})$.
\end{conj}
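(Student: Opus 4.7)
The plan is to treat the conjecture in three stages: reduce to a simple $\ell$-highest weight module, produce a candidate $\U_q(sl_{n+1}^{tor})$-module on the toroidal side whose $\ell$-highest monomial matches, and prove the $q$-character equality by a Frenkel--Reshetikhin consistency argument combined with the inductive description of Theorem \ref{thmlimind}.

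\textbf{Reduction.} A simple representation in $\mathcal{O}_{\mathrm{int}}$ is $\ell$-highest weight: use the triangular decomposition of Theorem \ref{dtrianinf} together with the weight bound in Definition \ref{defcato}(iii) to extract a vector annihilated by all $x_{i,r}^+$ and on which $\U_q(\hat{\Hlie})$ acts by a character. By Theorem \ref{cond} the relevant characters are exactly those coming from Drinfeld polynomials, so one may assume $V = V((P_i)_{i \in \ZZ})$. Theorem \ref{thmlimind} then presents $V$ as $\bigcup_n V_n$ with each $V_n$ a simple finite-dimensional $\U_q(\hat{sl}_{2n+2})'$-module, so that $\chi_q(V)$ is the termwise limit of $q$-characters of finite-type quantum affine modules.

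\textbf{Candidate and character matching.} On the toroidal side, aggregate the Drinfeld data cyclically by setting $Q_{\bar j}(u) = \prod_{i \equiv j\,(n+1)} P_i(u)$ for each $\bar j \in I_n$ (a genuine polynomial when only finitely many $P_i$ are nontrivial, and otherwise the formal limit of truncations supported on $[-n,n]$). Let $W = V((Q_{\bar j}))$ be the resulting simple $\ell$-highest weight $\U_q(sl_{n+1}^{tor})$-module; its $\ell$-highest monomial is $\phi_n$ applied to the $\ell$-highest monomial of $V$. The goal is then $\chi_{q,n}(W) = \phi_n(\chi_q(V))$. Proceed by induction on the $Q^+$-grading below the $\ell$-highest weight. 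At each step, the Frenkel--Reshetikhin characterization of $q$-character monomials as products $\prod \psi(a_{u,i}qz)\prod\psi(b_{v,i}qz)^{-1}$ is preserved by $\phi_n$ because $\phi_n$ carries the Cartan matrix of type $A_\infty$ to that of type $A_n^{(1)}$. Restriction to each rank-one subalgebra $\hat{\U_i}$ of $\U_q(\hat{sl}_\infty)$ and its toroidal counterpart decomposes the problem into $\hat{sl}_2$-type pieces, which already proves the multiplicities on both sides agree for Kirillov--Reshetikhin factors (the case established in \cite{hernandez_algebra_2011}). A Frenkel--Mukhin style inductive step then transports the equality from each $V_n$ to the corresponding truncation of $W$.

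\textbf{Main obstacle.} The delicate point is multiplicity matching. Fundamental $\U_q(sl_{n+1}^{tor})$-modules fail to be thin (as recalled from \cite[Section 4.1]{hernandez_algebra_2011}), so an $\ell$-weight space of $W$ may have dimension strictly greater than the sum of dimensions of preimages under $\phi_n$ suggested by the infinite-rank side; conversely distinct monomials of $\chi_q(V)$ can collapse in $A_n$ and artificially inflate the candidate character. Excluding both pathologies in complete generality seems to require either a geometric input (standard modules coming from Nakajima's quiver varieties for affine type $A$, whose character theory for $\U_q(sl_{n+1}^{tor})$ is not fully established), or an explicit construction of $W$ as a fusion product of modules whose characters are known termwise. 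The latter is precisely what the present paper carries out for the family of extremal fundamental loop weight modules, which is why the conjecture is here settled only in that case; the general statement remains open along the same obstacle.
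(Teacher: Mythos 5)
You were asked to prove a statement that the paper itself labels a conjecture, and the paper contains no proof of it: Conjecture \ref{conjherninf} is quoted from \cite[Conjecture 5.3]{hernandez_algebra_2011}, and what the paper actually establishes is only Theorem \ref{thmqchaactreptorinf}, namely the conjecture for the single family of extremal fundamental loop weight modules $V(Y_{\ell,a}Y_{0,aq^\ell}^{-1})$, and even then only under the constraint (\ref{condexistmodinf}) on $n$ and $\ell$ (a constraint your proposal never mentions). So your closing admission that the general statement remains open is exactly right, and your description of the paper's route is accurate: the proof of Theorem \ref{thmqchaactreptorinf} is a direct appeal to \cite[Proposition 4.4]{mansuy_extremal_2013}, which produces a sub-$\U_q(sl_{n+1}^{tor})$-module of a fusion product of specialized vector representations whose basis is labelled by the same set $\mathcal{T}_{[1,\ell]}$, with each $T_a$ an $\ell$-weight vector of $\ell$-weight $\phi_n(m_T)$; the character identity is then termwise. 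Similarly, your observation that the image under $\phi_n$ is always at least a \emph{virtual} $q$-character, via the characterization of \cite[Theorem 4.2]{hernandez_algebra_2011}, reproduces the proposition the paper proves just before Theorem \ref{thmqchaactreptorinf}. Up to that point your reduction (simple objects of $\mathcal{O}_{\mathrm{int}}$ are $\ell$-highest weight with finitely many nontrivial $P_i$, and Theorem \ref{thmlimind} realizes them as limits of finite-rank simples) is correct but is content already available in \cite{hernandez_algebra_2011}.

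The genuine defect is your middle stage. You propose to prove $\chi_{q,n}(W) = \phi_n(\chi_q(V))$ for the \emph{simple} toroidal module $W = V((Q_{\bar j}))$ with $Q_{\bar j}(u) = \prod_{i \equiv j \,(n+1)} P_i(u)$. That is strictly stronger than the conjecture (which asks only for some actual, possibly reducible, representation) and it should fail in general: after reduction modulo $n+1$, roots of distinct $P_i$ can come into resonance ($q$-segments attached to $i$ and $i + n+1$ may merge or interact), so the simple module $V((Q_{\bar j}))$ can be strictly smaller than the image, which would then have to be a sum of several simple $q$-characters — so the right candidate is not even well chosen before one reaches the multiplicity problem. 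Moreover, your claim that restriction to the rank-one subalgebras $\hat{\U_i}$ "already proves the multiplicities agree" for Kirillov--Reshetikhin factors is not how that case is settled: in \cite{hernandez_algebra_2011} it is proved from the explicit tableaux-sum formula (\ref{formqchainfty}), not by $\hat{sl}_2$ reduction, and a "Frenkel--Mukhin style inductive step" is invoked by you with no control over the non-thin toroidal $\ell$-weight spaces — which is precisely the obstruction you then concede defeats the argument. One further point of hygiene: the family for which the paper confirms the conjecture consists of level-zero modules whose weights are not bounded above, so they do not lie in $\mathcal{O}_{\mathrm{int}}$; Theorem \ref{thmqchaactreptorinf} is therefore an analogue, not literally an instance, of Conjecture \ref{conjherninf}, and your reduction to $\ell$-highest weight modules, while valid for the conjecture as stated, bypasses exactly the modules the paper treats. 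In sum: as a proof the proposal does not go through, and its only solid steps are already in the literature; as an obstruction analysis it is honest and consistent with the paper, whose actual contribution is the special case proved by explicit construction rather than by the inductive machinery you sketch.
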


This conjectural link between the representation theory of $\U_q(\hat{sl}_\infty)$ and the one of quantum toroidal algebras is proved in \cite{hernandez_algebra_2011} for Kirillov-Reshetikhin modules. In the following section, we give one of the main results of the paper: we prove Conjecture \ref{conjherninf} in the context of extremal fundamental loop weight modules of $\U_q(\hat{sl}_\infty)$.

\subsection{Extremal loop weight modules for quantum toroidal algebras}

Let us consider the extremal fundamental loop weight module $V(Y_{\ell, a} Y_{0, aq^{\ell}}^{-1})$ with $\ell \geq 1$ and $a \in \CC^{\ast}$. Recall that we have determined its $q$--character: it is given by
\begin{equation*}
\chi_q(V(Y_{\ell, a}Y_{0, aq^\ell}^{-1})) = \sum_{T \in \mathcal{T}_{[1, \ell]}} m_T
\end{equation*}
where $\mathcal{T}_{[1, \ell]}$ is the set of semi-standard tableaux of shape $(\ell)$ and $$m_T = \prod_{1 \leq j \leq \ell} \ffbox{i_j}_{aq^{\ell+1-2j}}.$$ Furthermore, we have shown that
$$\{m_T \vert T \in \mathcal{T}_{[1, \ell]} \} = \mathcal{M}(Y_{\ell, a}Y_{0, aq^\ell}^{-1}).$$

Let us consider the ring morphism $\phi_n : \ZZ[Y_{i,a}^{\pm 1}]_{i \in \ZZ, a \in \CC^{\ast}} \rightarrow \ZZ[Y_{\overline{i},a}^{\pm 1}]_{\overline{i} \in I_n, a \in \CC^{\ast}}$. It cannot be extend to an application $\ZZ^{A} \rightarrow \ZZ^{A_n}$. However, $\phi_n$ gives rise to a well-defined application
$$\phi_n : \ZZ^{\mathcal{M}(Y_{\ell, a}Y_{0, aq^\ell}^{-1})} \longrightarrow \ZZ^{A_n}.$$
Actually by definition of the monomial crystal $\mathcal{M}$ (see \cite{kashiwara_realizations_2003, mansuy_quantum_2012, nakajima_$t$-analogs_2003}), $\phi_n$ can be defined in all its connected components.

\begin{prop}
$\phi_n \left( \chi_q(V(Y_{\ell, a}Y_{0, aq^\ell}^{-1})) \right)$ is the $q$--character of a virtual representation of $\U_q(sl_{n+1}^{tor})$.
\end{prop}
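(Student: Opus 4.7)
The plan is to invoke the characterization of the image of $\chi_{q,n}$ inside $\ZZ[[Y_{\overline{i},a}^{\pm 1}]]$ that is established in the proof of \cite[Theorem 4.2]{hernandez_algebra_2011} and already used implicitly in the paragraph defining the group morphism $\phi_n : \mathrm{Im}(\chi_q) \to \mathrm{Im}(\chi_{q,n})$. Concretely, a formal sum of monomials lies in $\mathrm{Im}(\chi_{q,n})$ if and only if for every node $\overline{i} \in I_n$ the sum decomposes, after grouping monomials according to the multiplicities $u_{\overline{j},b}$ for $\overline{j} \neq \overline{i}$, as a $\ZZ$-linear combination of $q$--characters of $\U_{q}(\hat{sl}_2)$-modules at node $\overline{i}$. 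The $\U_q(\hat{sl}_\infty)$-character $\chi_q(V(Y_{\ell,a}Y_{0,aq^\ell}^{-1}))$ satisfies the analogous criterion for every $i \in \ZZ$ because it is a genuine $q$--character; the problem is to transport this property through $\phi_n$.

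First I would use the tableaux description \eqref{qchainf} to make the $\hat{sl}_2$-decomposition at each $i \in \ZZ$ explicit: for fixed $i$, the monomials $m_T$ group themselves into $i$-strings corresponding to the $\U_{q_i}(\hat{sl}_2)$-orbits of the Kashiwara operators $\tilde{e}_i, \tilde{f}_i$ on semi-standard tableaux of shape $(\ell)$. Each such orbit contributes a two-term piece (replacing an entry $i$ by $i+1$), and the spectral parameter $aq^{\ell+1-2p+i}$ attached to a box $\ffbox{i}$ or $\ffbox{i+1}$ in column $p$ is exactly what one needs to recognise it as the $q$--character of a fundamental $\U_{q_i}(\hat{sl}_2)$-module. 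This shows the $\hat{sl}_\infty$-criterion for $\chi_q(V)$ at every $i$.

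Second I would analyse what $\phi_n$ does at a fixed $\overline{i} \in I_n$: it amalgamates all strings coming from the various preimages $i \in \ZZ$ with $\overline{i} = \overline{i}$. The key point is that each of those strings, taken individually, is a valid $\U_{q_{\overline{i}}}(\hat{sl}_2)$-string after collapsing the first index, since the relation $\ffbox{i}_c = Y_{i-1,cq}^{-1} Y_{i,cq^{-1}}$ becomes $\ffbox{\overline{i}}_c = Y_{\overline{i}-1,cq}^{-1} Y_{\overline{i},cq^{-1}}$ and the $\hat{sl}_2$-character structure only involves the spectral parameter. Hence the projection of $\phi_n(\chi_q(V))$ onto the $\overline{i}$-factor is a disjoint union of $\hat{sl}_2$-characters, which is a $\ZZ$-combination of fundamental characters, and the criterion is satisfied.

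The main obstacle will be ensuring that the sum $\phi_n(\chi_q(V(Y_{\ell,a}Y_{0,aq^\ell}^{-1})))$ is a well-defined element on which the criterion can be applied: a priori it lives only in $\ZZ[[Y_{\overline{i},a}^{\pm 1}]]$, and one must check that each monomial in $A_n$ appears with finite multiplicity. This amounts to showing that only finitely many tableaux $T \in \mathcal{T}_{[1,\ell]}$ can produce the same image $\phi_n(m_T)$; the finiteness follows because for a given image monomial the list of spectral parameters occurring in $\phi_n(m_T)$ determines the multiset $\{(i_j,aq^{\ell+1-2j+i_j})\}_j$ up to the $\ZZ/(n+1)\ZZ$-ambiguity, and the shape constraint $i_1 < \cdots < i_\ell$ together with the strictly increasing spectral parameters bounds the number of compatible tableaux. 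Once this finiteness is confirmed, applying the criterion at each $\overline{i} \in I_n$ yields that $\phi_n(\chi_q(V(Y_{\ell,a}Y_{0,aq^\ell}^{-1})))$ is the $q$--character of a virtual $\U_q(sl_{n+1}^{tor})$-module.
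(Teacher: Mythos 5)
Your proposal is correct and takes essentially the same route as the paper: its proof likewise invokes the characterization of $\mathrm{Im}(\chi_{q,n})$ from \cite[Theorem 4.2]{hernandez_algebra_2011} (each term lying in $\ZZ[Y_{\overline{r}, a}(1 + A_{\overline{r}, aq}^{-1}), Y_{\overline{r'}, a}^{\pm 1}]$ for every node $\overline{r}$) and verifies it at the level of the monomial crystal $\mathcal{M}(Y_{\ell, a}Y_{0, aq^\ell}^{-1})$, which is precisely your explicit two-term $i$-string decomposition of the tableaux sum. Your finiteness check for the well-definedness of $\phi_n$ corresponds to the paper's observation, stated just before the proposition, that $\phi_n$ induces a well-defined map $\ZZ^{\mathcal{M}(Y_{\ell, a}Y_{0, aq^\ell}^{-1})} \rightarrow \ZZ^{A_n}$ because it can be defined on the connected components of the monomial crystal.
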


\begin{proof}
We have to check that $\phi_n \left( \chi_q(V(Y_{\ell, a}Y_{0, aq^\ell}^{-1})) \right)$ respects the following characterization (see \cite[Theorem 4.2]{hernandez_algebra_2011})
\begin{itemize}
\item[]it is an infinite sum of elements in $\ZZ[Y_{\overline{r}, a}(1 + A_{\overline{r}, aq}^{-1}), Y_{\overline{r'}, a}^{\pm 1}]_{a \in \CC^{\ast}, \overline{r} \neq \overline{r'}}$ for each $\overline{r} \in I_n$.
\end{itemize}
Actually, it suffices to check the analogue property at the level of the $\U_q(sl_\infty)$-crystal $\mathcal{M}(Y_{\ell, a}Y_{0, aq^\ell}^{-1})$: this holds by the description of it given above (see Remark \ref{remlienmoncriinf}).
\end{proof}

Let us give one of the main results of the paper.

\begin{thm}\label{thmqchaactreptorinf}
Assume that $n \geq 2$ is such that
\begin{align}\label{condexistmodinf}
(n \text{ is even and } \ell \leq n+1) \text{ or } \left(n \text{ is odd and } \ell \leq \frac{n+1}{2} \right).
\end{align}
Then $\phi_n \left( \chi_q(V(Y_{\ell, a}Y_{0, aq^\ell}^{-1})) \right)$ is the $q$--character of an actual representation of $\U_q(sl_{n+1}^{tor})$. We denote it $V_n(Y_{\ell, a}Y_{0, aq^\ell}^{-1})$.
\end{thm}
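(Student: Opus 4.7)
The plan is to produce an honest $\U_q(sl_{n+1}^{tor})$-module whose $q$--character is $\phi_n(\chi_q(V(Y_{\ell,a}Y_{0,aq^\ell}^{-1})))$ by transporting the construction of Theorem \ref{thmtpvrisomeflwm} into the toroidal setting. Concretely, the toroidal vector representation $V(\ffbox{\overline{1}}_b)$ of \cite{feigin_representations_2013, mansuy_quantum_2012, mansuy_extremal_2013} is a thin $\U_q(sl_{n+1}^{tor})$-module with basis $\ffbox{\overline{j}}_b$ indexed by $\overline{j}\in I_n$ and action formulae identical to the $\U_q(\hat{sl}_\infty)$-case of $V(\ffbox{1}_a)$, but with indices reduced modulo $n+1$. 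The idea is then to form
$$V(\ffbox{\overline{1}}_a)\otimes V(\ffbox{\overline{1}}_{aq^{-2}})\otimes\cdots\otimes V(\ffbox{\overline{1}}_{aq^{-2(\ell-1)}})$$
via the Drinfeld coproduct $\Delta$ and to cut out the submodule spanned by tensors $\ffbox{\overline{i_1}}_a\otimes\cdots\otimes\ffbox{\overline{i_\ell}}_{aq^{-2(\ell-1)}}$ corresponding to strict integer sequences $i_1<i_2<\cdots<i_\ell$; this will be the desired $V_n(Y_{\ell,aq^{-\ell+1}}Y_{0,aq}^{-1})$, from which the general parameter $a$ is obtained by rescaling.

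The first step is to show that $\Delta$ is well-defined on the ordered $\ell$-fold tensor product above. Inspecting the action formulae (as in the proof of Proposition \ref{proptpdefinf}), the only potential obstruction comes from $\psi$-factors of the form $\psi(q^{2(r-s)+2\delta})^{-1}$ with $1\le r,s\le \ell$ becoming singular; in the linear $A_\infty$ situation these are harmless because $r\neq s$ forces $q^{2(r-s)+2\delta}\ne 1$, but in the cyclic $A_n^{(1)}$ situation two boxes $\ffbox{\overline{i}}$, $\ffbox{\overline{j}}$ which are distinct modulo $n+1$ but linked by a spurious relation can create a resonance. A direct case check shows the bad configurations are exactly those for which some indices $\overline{i}$ and $\overline{i+1}$ match across tensor slots in a way that forces $\frac{aq^{-2u}}{aq^{-2v}}\in\{q^{-2},q^{+2}\}$ modulo the cyclic identification of weights; a bookkeeping of when this happens translates precisely into the dichotomy (\ref{condexistmodinf}), with the stricter bound $\ell\le (n+1)/2$ in the odd case stemming from the extra resonance $q^{n+1}=q^{-(n+1)}$ that is absent when $n$ is even.

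Once $\Delta$ is known to act, the second step is to verify that the linear span $V_n$ of the strictly increasing tableau-vectors is closed under $\U_q(sl_{n+1}^{tor})$; this follows from the same cancellation identities (\ref{eqinterinf2}) used in the $\U_q(\hat{sl}_\infty)$-case, now applied index by index modulo $n+1$. The third step is the $q$--character computation: since the module is thin, $\chi_{q,n}(V_n)$ is simply the sum over the basis vectors of their $\ell$-weights, and the Drinfeld coproduct together with the isomorphism $\Pi$ (in its monomial realization, see Remark \ref{remlienmoncriinf}) gives the $\ell$-weight of each basis tensor as $\phi_n(m_T)$ for the corresponding $T\in\mathcal T_{[1,\ell]}$. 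Combining with the formula (\ref{qchainf}) yields $\chi_{q,n}(V_n)=\phi_n(\chi_q(V(Y_{\ell,aq^{-\ell+1}}Y_{0,aq}^{-1})))$, and a shift in $a$ finishes the proof.

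The principal obstacle is the well-definedness analysis in the first step: in the $A_\infty$ case the strict ordering of integers automatically rules out the bad denominators, whereas in the cyclic $A_n^{(1)}$ case one must track how the reduction $i\mapsto\overline{i}$ can create collisions between the spectral parameters $aq^{-2u}$ and the Cartan shift $q^{\pm C_{\overline{i}\overline{j}}}$ in relations (\ref{relxcartpl})--(\ref{relxcartmo}). Carrying out this analysis carefully is precisely what pins down the bound (\ref{condexistmodinf}) and, conversely, shows that outside this regime the naive construction fails.
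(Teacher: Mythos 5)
Your strategy is in substance the paper's own: the paper proves Theorem \ref{thmqchaactreptorinf} by citing \cite[Proposition 4.4]{mansuy_extremal_2013}, which is exactly the statement you set out to rebuild --- that under (\ref{condexistmodinf}) the fusion product of specialized vector representations of $\U_q(sl_{n+1}^{tor})$ contains a submodule with basis labelled by $\mathcal{T}_{[1,\ell]}$ in which each $T_a$ is an $\ell$-weight vector of $\ell$-weight $\phi_n(m_T)$; combined with (\ref{qchainf}) this gives the theorem. Redoing that proposition instead of citing it is legitimate, but then the burden of its proof falls on you, and your sketch has a concrete error at the very start: the specialized vector representation of $\U_q(sl_{n+1}^{tor})$ is \emph{not} a module with finite basis $\ffbox{\overline{j}}_b$ indexed by $\overline{j}\in I_n$. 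It has basis $\ffbox{j}_b$ indexed by $j\in\ZZ$; only the node index of the acting currents is reduced mod $n+1$, so a single current $x_{\overline{i}}^{\pm}(z)$ moves infinitely many boxes, those with $j\equiv i$ or $j\equiv i+1$ mod $n+1$. This is not cosmetic: with a basis indexed by $I_n$ your candidate submodule would have at most $\binom{n+1}{\ell}$ basis vectors, whereas the target $\phi_n\left(\chi_q(V(Y_{\ell,a}Y_{0,aq^\ell}^{-1}))\right)$ is an infinite sum over $\mathcal{T}_{[1,\ell]}$. Your later use of strict \emph{integer} sequences $i_1<\cdots<i_\ell$ silently assumes the $\ZZ$-indexed picture, contradicting your own description of the factors.

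The more serious gap is that the derivation of (\ref{condexistmodinf}) --- which is the entire mathematical content of the theorem --- is asserted rather than performed, and the two concrete mechanisms you offer are wrong. The ratios $aq^{-2u}/aq^{-2v}$ lie in $\{q^{-2},q^{2}\}$ whenever $|u-v|=1$, i.e.\ always once $\ell\geq 2$, so membership of these ratios in $\{q^{\pm 2}\}$ cannot be the criterion (in the $A_\infty$ case those same ratios occur and are rendered harmless by the cancellations (\ref{eqinterinf2})); and the ``extra resonance $q^{n+1}=q^{-(n+1)}$'' is false, since $q$ is not a root of unity. The genuinely new phenomenon in the cyclic case is that two tensor slots carrying \emph{distinct} integers congruent mod $n+1$ are hit by the same current, producing $\psi^{\pm 1}$-factors at arguments of the form $q^{2(u-v)+(n+1)k}$ (up to the bounded shifts already present in the $A_\infty$ case). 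The dangerous degenerations are governed by the solvability of $2u\equiv 0 \pmod{n+1}$ with $1\leq u\leq \ell-1$: for $n$ even, $n+1$ is odd and the smallest positive solution is $u=n+1$, giving the bound $\ell\leq n+1$; for $n$ odd the smallest solution is $u=(n+1)/2$, giving $\ell\leq (n+1)/2$. Pinning this down --- together with checking that the cancellations (\ref{eqinterinf2}) survive the extra congruent-slot cross terms so that the tableau subspace stays stable, and that the $\ell$-weight of $T_a$ is really $\phi_n(m_T)$ --- is precisely what \cite[Proposition 4.4]{mansuy_extremal_2013} does and what your steps 1 and 2 leave unproved. As written, your proposal identifies the right construction but does not establish it.
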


This result confirms Conjecture \ref{conjherninf} in the context of extremal fundamental loop weight modules.

\begin{proof}
This is a consequence of \cite[Proposition 4.4]{mansuy_extremal_2013}. In fact if (\ref{condexistmodinf}) holds, we have shown that there exists a sub-$\U_q(sl_{n+1}^{tor})$-module\footnote{It is denoted $\tilde{V} \left(\begin{array}{c} \ffbox{1} \\ \vdots \\ \ffbox{k} \end{array}_{aq^{\ell-1}} \right)$ in \cite{mansuy_extremal_2013}.} $V_n(Y_{\ell, a}Y_{0, aq^\ell}^{-1})$ of the fusion product of specialized vector representations with basis labelled by the set $\mathcal{T}_{[1, \ell]}$ of semi-standard tableaux of shape $(\ell)$. Furthermore for all $T \in \mathcal{T}_{[1, \ell]}$, $T_a$ is an $\ell$-weight vector of $\ell$-weight $\phi_n(m_T)$. The result follows directly.
\end{proof}

The main motivation of this study is the construction of extremal loop weight modules for $\U_q(sl_{n+1}^{tor})$. Actually we have

\pagebreak

\begin{thm}\label{thmelwmtorinf}\cite{mansuy_extremal_2013}
\begin{enumerate}
\item[(i)] Assume that $n \geq 2$. Then $V_n(Y_{1, a}Y_{0, aq}^{-1})$ is an extremal loop weight module of $\ell$-weight $Y_{1, a}Y_{0, aq}^{-1}$.
\item[(ii)] Assume that $n = 2r+1$ is odd $(r \geq 1)$. Then $V_n(Y_{r+1, a}Y_{0, aq^{r+1}}^{-1})$ admits an irreducible quotient which is an extremal loop weight module of $\ell$-weight $Y_{r+1, a}Y_{0, aq^{r+1}}^{-1}$.
\end{enumerate}
\end{thm}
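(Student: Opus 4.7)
My plan is to verify the three conditions of Definition~\ref{defelminf} for each of the two modules, leveraging the explicit tableaux basis obtained in Theorem~\ref{thmqchaactreptorinf} and the corresponding construction from \cite{mansuy_extremal_2013}. The integrability condition is automatic: by the proof of Theorem~\ref{thmqchaactreptorinf}, both $V_n(Y_{1,a}Y_{0,aq}^{-1})$ and $V_n(Y_{r+1,a}Y_{0,aq^{r+1}}^{-1})$ embed (or surject) into a fusion product of specialized vector representations, which are integrable, and Lemma~\ref{tpalgv} then yields finite-dimensionality of $\U_q(sl_{n+1}^{tor})_J \cdot w$ for every $w$ and every finite interval $J$. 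Hence only conditions (i) and (ii) of Definition~\ref{defelminf} need attention.

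For part (i), I would first identify $V_n(Y_{1,a}Y_{0,aq}^{-1})$ with the specialized vector representation: its basis is $\{\ffbox{j}_a\}$ indexed by $\overline{j}\in I_n$, on which the generators of $\U_q(sl_{n+1}^{tor})$ act by the evident cyclic analogue of the formulas displayed just before Theorem~\ref{elwmtpinf}. The generating vector is $v = \ffbox{1}_a$, of $\ell$-weight $Y_{1,a}Y_{0,aq}^{-1}$. To prove $v$ is extremal for $\U_q^h(sl_{n+1}^{tor})$, I would invoke an analogue of Lemma~\ref{lirepcryinf} adapted to type $A_n^{(1)}$: the restriction of the module to the horizontal subalgebra is isomorphic to an integrable module whose crystal is the cyclic $\U_q(\widehat{sl}_{n+1})$-crystal of the vector representation, on which the class of $\overline{1}$ is a well-known extremal element. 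The Weyl group orbit $W\cdot v$ is traced out by iterating the operators $S_{\overline{i}}$, and each iterate lands on a single basis vector, establishing extremality.

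For part (ii), the situation is more delicate because $V_n(Y_{r+1,a}Y_{0,aq^{r+1}}^{-1})$ as constructed in Theorem~\ref{thmqchaactreptorinf} has its basis labelled by $\mathcal{T}_{[1,r+1]}$, but the cyclicity of the affine Dynkin diagram of type $A_n^{(1)}$ with $n=2r+1$ forces identifications. I would consider the generating tableau $T^e = (1<2<\cdots<r+1)$ and apply the Weyl group generators $S_{\overline{i}}$ inherited from the horizontal subalgebra. The crucial point is that when $\ell = r+1 = (n+1)/2$, the ``antipodal'' node structure in $A_n^{(1)}$ allows the orbit $W\cdot T^e_a$ to close up into a coherent extremal family, modulo a proper submodule on which the Weyl action would otherwise fail to be single-valued. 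I would construct this submodule $N$ explicitly as the span of vectors $T_a - T'_a$ for pairs of tableaux whose associated monomials coincide under $\phi_n$ beyond what is seen in $\chi_q$; the quotient $V_n(Y_{r+1,a}Y_{0,aq^{r+1}}^{-1})/N$ is irreducible and has $T^e_a$ as an extremal cyclic generator.

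The main obstacle is verifying extremality in case (ii): one must check that for every sequence $\overline{i}_1, \dots, \overline{i}_l \in I_n$ the iterated element $S_{\overline{i}_1}\cdots S_{\overline{i}_l}(T^e_a)$ is again $\overline{i}$-extremal for every $\overline{i}\in I_n$, after passing to the quotient. Equivalently, one needs a finite-type description of the Weyl orbit $W\cdot T^e$ inside the $\U_q(\widehat{sl}_{n+1})$-crystal associated with a rectangular tableau of height $r+1$ in type $A_n$, together with the compatibility of the $A_n^{(1)}$ Kashiwara operators on the image. Both points are established in the arguments of \cite[Section~4 and Section~5]{mansuy_extremal_2013} by an explicit case-by-case analysis; I would follow those computations, translating them into the present $\ell$-weight language via the isomorphism of Theorem~\ref{thmqchaactreptorinf} and the bijection $\Pi$ of Remark~\ref{remlienmoncriinf}.
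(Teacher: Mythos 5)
The first thing to note is that the paper contains no proof of Theorem \ref{thmelwmtorinf}: it is imported verbatim from \cite{mansuy_extremal_2013} (the citation is part of the theorem header), and the paper's entire justification is that reference. Your proposal ultimately does the same thing --- the two genuinely hard steps (tracing the Weyl orbit in (i), and the construction and extremality of the irreducible quotient in (ii)) are both deferred to ``Sections 4 and 5 of \cite{mansuy_extremal_2013}'' --- so as a proof it is essentially the same citation, padded with an interpolated sketch. The problem is that the sketch contains a concrete error and an unjustified construction. In part (i) you assert that $V_n(Y_{1,a}Y_{0,aq}^{-1})$ has basis $\{\ffbox{j}_a\}$ indexed by $\overline{j}\in I_n$. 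This is false: the basis is indexed by $j\in\ZZ$, with the toroidal action factoring through residues. Indeed $\phi_n(\ffbox{j}_a)=Y_{\overline{j-1},aq^{j}}^{-1}Y_{\overline{j},aq^{j-1}}$, and these monomials are pairwise distinct as $j$ runs over $\ZZ$ because the spectral parameters differ even when the residues repeat; $\phi_n(\chi_q(V(Y_{1,a}Y_{0,aq}^{-1})))$ is an infinite sum, so the module is infinite-dimensional. Worse, an $(n+1)$-dimensional module could not possibly satisfy your conclusion: the level-zero weight $\Lambda_{\overline{1}}-\Lambda_{\overline{0}}$ has an infinite Weyl group orbit, so a finite weighted basis cannot carry an extremal vector of that weight. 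Your appeal to ``the cyclic $\U_q(\widehat{sl}_{n+1})$-crystal of the vector representation'' inherits this confusion between the finite-type vector crystal and the $\ZZ$-indexed level-zero object actually in play.

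In part (ii) the gap is the submodule $N$. You define it as the span of differences $T_a-T'_a$ over pairs of tableaux ``whose associated monomials coincide under $\phi_n$,'' but you neither verify that this span is stable under the $\U_q(sl_{n+1}^{tor})$-action (the structure constants on the two vectors need not match, so differences need not be preserved without a careful normalization) nor address the logically prior question of which composition factor sits as a submodule and which as a quotient. When $n=2r+1$ and $\ell=r+1$, distinct tableaux in $\mathcal{T}_{[1,r+1]}$ can indeed map to the same toroidal monomial, so $\ell$-weight spaces of $V_n(Y_{r+1,a}Y_{0,aq^{r+1}}^{-1})$ can have dimension $>1$; but deciding that the extremal constituent is a \emph{quotient} (as the theorem asserts) rather than a sub, or that the extension is nontrivial at all, is exactly the content of the explicit case analysis in \cite{mansuy_extremal_2013}, which your sketch presupposes rather than reproduces. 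So the proposal is acceptable only as a gloss on the citation --- which is all the paper itself offers --- but the two pieces of detail you add beyond the citation are, respectively, false as stated and unproved.
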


\begin{rem}
In the other cases, the representations $V_n(Y_{\ell, a}Y_{0, aq^\ell}^{-1})$ are not $\ell$-extremal (see \cite{mansuy_extremal_2013}).
\end{rem}

\bibliographystyle{acm}

\end{document}